\theoremstyle{plain}
\newtheorem{lemma}{Lemma}[section]
\newtheorem{theorem}[lemma]{Theorem}
\newtheorem{proposition}[lemma]{Proposition}
\newtheorem{corollary}[lemma]{Corollary}
\newtheorem{remark}[lemma]{Remark}
\newtheorem{definition}[lemma]{Definition}
\newtheorem{conjecture}[lemma]{Conjecture}
\newcommand\Z{\mathbb Z}
\newcommand\R{\mathbb R}
\newcommand\C{\mathbb C}
\newcommand\e{\epsilon}
\newcommand{\Irr}{\operatorname{Irr}}
\newcommand{\GL}{\operatorname{GL}}
\newcommand{\Sp}{\operatorname{Sp}}
\newcommand{\SO}{\operatorname{SO}}
\newcommand{\Cusp}{\mathcal{C}}
\newcommand{\Disc}{\mathcal{D}}
\newcommand{\Irrcl}{\Irr^{cl}}
\newcommand{\Tempcl}{\mathcal{T}^{cl}}
\newcommand{\spsi}{\operatorname{s.p.}}
\newcommand{\ssm}{\operatorname{s.s.}}
\newcommand\mc{\mathcal}
\newcommand\ppe{\pi(\psi,\epsilon)}
\newcommand{\Jord}{\operatorname{Jord}}
\newcommand\jrp{\Jord_\rho(\psi)}
\newcommand\jrpd{\Jord_\rho(\psi_d)}
\newcommand\jp{\text{\rm Jord}(\psi)}
\newcommand\jr{\Jord_\rho}
\newcommand{\Jac}{\operatorname{Jac}}
\newcommand\slc{SL(2,\mathbb C)}
\newcommand\wddg{W_F\times SL(2,\mathbb C) \times SL(2,\mathbb C)}
\newcommand\rab{(\rho,a,b)}
\newcommand\rabp{(\rho,a',b')}
\newcommand\rabpp{(\rho,a'',b'')}
\newcommand{\Pgp}{\Psi_{\operatorname{g.p.}}}
\newcommand{\Pe}{\Psi_{\operatorname{ele.}}}
\newcommand{\Pddr}{\Psi_{\operatorname{d.d.r.}}}
\newcommand\h{\hookrightarrow}
\newcommand\hui{\underset{\text{u.i.sub.}}\hookrightarrow}
\newcommand{\ASS}{DL{ }}
\newcommand\D{\Delta}
\newcommand\s{\sigma}
\newcommand\SC{ \mathcal S(\Cusp)}
\newcommand\supp{\operatorname{supp}}
\newcommand\tr{^{\mathbf t}}
\newcommand\la{\langle}
\newcommand\ra{\rangle}
\numberwithin{equation}{section}
\begin{document}

\title[On unitarizability and Arthur packets]
{On unitarizability and Arthur packets}

\mark{Marko Tadi\'c}

\author{Marko Tadi\'c}

\address{Department of Mathematics, University of Zagreb
\\
Bijeni\v{c}ka 30, 10000 Zagreb,
 Croatia\\
Email: \tt tadic{\char'100}math.hr}

\keywords{non-archimedean local fields, classical groups, unitarizability, A-packets}
\subjclass[2000]{22E50, Secondary: 11F70, 11S37}

\thanks{This work has been supported by Croatian Science Foundation under the
project IP-2018-01-3628.}
\date{\today}

\begin{abstract}
In this paper we begin to explore the relation between the question of unitarizability of classical $p$-adic groups, and Arthur packets, starting from \cite{MR-T-CR3}.

\end{abstract}

\maketitle

\setcounter{tocdepth}{1}
\tableofcontents

\section{Introduction}\label{intro}

In \cite{MR3969882} we proposed  a strategy to approach unitarizability of classical groups over a $p$-adic field $F$ of characteristic $0$. In that strategy, the only relevant information is the cuspidal reducibility exponent, which is an element of 
 $$
 \tfrac12\Z_{\geq0}
 $$
 (therefore, these are the parameters with which we work).
  We applied this strategy in \cite{MR-T-CR3} to classify unitarizability in coranks $\leq 3$. The key to control unitarizability in \cite{MR-T-CR3} is to understand it in the case of the so-called critical points (see Definition \ref{def-critical}). These are the places where the most important irreducible unitarizable representations show up. 
 Non-unitarizable representations also give   some key information for proving exhaustion. 
  The majority of irreducible subquotients  are non-unitarizable. Still, approximately 100 types of irreducible subquotients  are unitarizable.
Unitarizability of these  representations  was proved using standard methods of representation theory,  except in the case of the representations given in the Langlands classification by
\begin{equation}
\label{intro-Moe-r}
 L(\nu^{\alpha-1}\rho,\nu^\alpha\rho;\delta([\nu^{\alpha}\rho];\sigma)),
 \end{equation}
 where $\rho$ and $\sigma$ are  irreducible  cuspidal representations of a general linear  and a classical group, respectively, and $\alpha$ is corresponding (exceptional) reducibility point  which satisfies $\alpha \geq \frac32$
  (see \ref{LC CG} and \ref{simple SIR} for  a description of the notation).
 C. M\oe glin  proved its unitarizability using her  explicit characterisation of Arthur packets (see her Appendix A in \cite{MR-T-CR3}). This is the single place in \cite{MR-T-CR3} where Arthur packets interact (explicitly) with questions of unitarizability. 
  The lowest rank cases when  representations \eqref{intro-Moe-r} show up are $\Sp(10,F)$ and  $\SO(11,F)$.
 
 We expect that the role of Arthur packets is much deeper in the unitarizability problem. This paper is a step in trying to understand (and an attempt  to formulate more precisely) this interplay. In this paper we consider  symplectic and split     special odd-orthogonal groups over $F$ (we expect that the results of this paper also hold  for other classical groups).
 
 The  starting point of this paper in the direction of  Arthur packets   
 are the M\oe glin representations \eqref{intro-Moe-r} which we considered in \cite{MR-T-CR3}.  
  In this paper we extend the M\oe glin construction to
 a two-parameter family
 $$
\pi_{m,n}:= L([\nu^{\alpha-1}\rho],[\nu^{\alpha}\rho],\dots, [\nu^{\alpha+m}\rho];\delta([\nu^{\alpha}\rho,\nu^{\alpha+n}\rho];\sigma)), \quad m,n\geq 0
 $$
  (see Theorem \ref{theorem >1}).  We have seen in \cite{MR-T-CR3} that  the representations $\pi_{0,0}$ (i.e. \eqref{intro-Moe-r})   are isolated in the unitary duals. We expect that all the representations $\pi_{n,m}$ are isolated as well. Further, these representations  satisfy the following very simple formula for the Aubert duality
\begin{equation}
\label{intro-tr}
\pi_{m,n}\tr=\pi_{n,m}.
  \end{equation}
 Recall that for the Speh  representations we have analogous formulas for duality 
 $$
 u_\rho(m,n)\tr=u_\rho(n,m)
 $$
  (see  \ref{LC-GL} for  explanation of the notation).

        Further, in Theorems \ref{thm-0} and \ref{thm-1/2} we describe  analogous two-parameter families   in the cases of non-exceptional reducibilities $0$ and $\tfrac12$ (excluding finitely many  $\rho$'s, all reducibilities are of this type when we fix $\sigma$), and also handle the case of exceptional reducibility at $1$ (Theorem \ref{thm-1}). 
 We also compute  the  Aubert duals of these representations, and get formulas similar to  formula \eqref{intro-tr} (see Propositions \ref{tr 0 inv}, \ref{prop inv 1/2} and \ref{prop inv 1}). We also expect  for these representations to be isolated, excluding few of them (with very low indexes).

\medskip

 The second starting point of this paper in the direction of the Arthur packets 
 is the list of unitarizable irreducible subquotients at critical points in \cite{MR-T-CR3}. As we already mentioned, this is a list of around 100 types of them.
 The list (which was essentially obtained through case-by-case considerations; it took about half of \cite{MR-T-CR3} to get it) does not reveal a clear pattern; in fact, at first glance, it seems somewhat random.

 Recall that we know from our present (very limited) understanding of unitarizability: In the case of general linear groups, as well as generic and spherical representations of classical groups (see \cite{MR0870688}, \cite{MR2046512} and \cite{MR2767523}), automorphic representations play a key role in describing unitarizability.”
   It may be natural to try to see if this is also the case  for  classical groups. 

 Arthur packets provide us with a significant number of  unitarizable representations (their unitarizability  follows from the fundamental  work \cite{MR3135650} of J. Arthur). For a number  of representations of Arthur type   we do not know to prove their unitarizability  by other methods (at least not at the present time).  The most interesting representations of Arthur type seem to be those which are irreducible subquotients at  critical points.

Unitary duals carry natural topology (\cite{MR0150241}), and the most interesting representations in the unitary duals are the isolated representations. The following conjecture is an attempt to relate
    Arthur type representations, critical points  and isolated representations.
     It may easily happen that the following conjecture is not true, but still we expect that this line of thinking is useful.
  This conjecture may be very hard to prove (if it is true).

  \begin{conjecture}
 \label{intro conj-1-intro} {\ }
 
 \begin{enumerate}
 
 \item
 \label{conj A-cp}
 
Let $\pi$ be an irreducible subquotient at a critical point. Then

the following two claims are equivalent:

\begin{enumerate}

\item
\label{intro uni-c} $\pi$ is unitarizable.

\item 
\label{intro Arthur-c}
$\pi$ is a member of some Arthur packet.

\end{enumerate}

\item
\label{conj: iso-critical-intro}
 If $\pi$ is an isolated representation in the unitary dual,
then $\pi$ is a 
representation of critical type.

\item
\label{conj: iso-A-intro}
 Each isolated representation $\pi$ in the unitary dual is in an Arthur packet.

\end{enumerate}
 
 \end{conjecture}
 
 Note that  \eqref{conj A-cp} and 
 \eqref{conj: iso-critical-intro} 
 would imply 
 \eqref{conj: iso-A-intro} (claim \eqref{conj: iso-critical-intro} of the above conjecture was stated as Conjecture 8.16 in \cite{MR-T-CR3}).
 
 Now we briefly comment   
 claim \eqref{conj A-cp} of the above conjecture.
  As  we already noted,  the statement that 
  \eqref{intro Arthur-c} 
  implies 
  \eqref{intro uni-c} is known 
  (so  claim \eqref{conj A-cp}
 is that \eqref{intro uni-c} implies \eqref{intro Arthur-c}).
  This claim would give   a relatively   simple pattern of understanding unitarizability in the most delicate cases for classical groups.
  It relates  a highly mysterious and very hard question of unitarizability/non-unitarizability at the critical points to (at least a little bit) less mysterious and more combinatorial   question of belonging to  Arthur packets (a problem which seems easier to handle).

 If the above claim is true, it could also provide   upper bounds for unitarizability in general (and therefore,  be useful for exhaustion questions). Namely, the exhaustion is obtained  in \cite{MR-T-CR3} (as well as in the other papers on unitarizability in the corank two) by reducing the questions of the non-unitarizability  to the known non-unitarizability at the critical points.
 
We have the following (very limited) support to the above conjecture:
 
 \begin{theorem}
 \label{intro thm-conj}
  Conjecture
 \ref{intro conj-1-intro} holds if $\pi$ is an irreducible representation     which is a subquotient at a critical point in corank $\leq3$, or an unramified  representation, or a generic representation.
\end{theorem}

The fact that the above conjecture holds if $\pi$ is an unramified  (resp. a generic) representation follows easily from \cite{MR2767523} (resp\cite{MR2046512}). If $\pi$ is an irreducible subquotient at a critical point in corank $\leq3$, then  claims \eqref{conj: iso-critical-intro} and \eqref{conj: iso-A-intro} follow directly from \cite{MR-T-CR3}, while claim \eqref{conj A-cp}
is
 Theorem \ref{cr leq 3} of this paper.

\smallskip

Very important parts of unitary duals are automorphic duals (introduced in \cite{MR2331344} by L. Clozel; we recall  this notion in section \ref{sec: intermediate} of this paper). In the cases where unitarizability is known, automorohic duals 
usually contain   the most important representations of the unitary duals (like isolated representations). If  \eqref{conj: iso-A-intro} of  Conjecture \ref{intro conj-1-intro} holds, then each isolated representation in the unitary dual would be isolated in the automorphic dual.  
Such a representation  must be primitive, i.e. it must not be a subrepresentation of a representation parabolically induced 
by an Arthur type representation of a proper Levi subgroup (see Definition \ref{def-prim}).

The question of the isolated representations in the unitary duals is one of the most delicate problems of the unitarizability (see \cite{MR0894588} for the case of unitary duals of general linear groups and \cite{MR2767523} for the case of unramified unitary duals of classical groups).          The study of automorphic duals in \cite{MR2742213} (under assumption of the generalized Ramanujan conjecture and the  ``Arthur + $\e$"  conjecture of Clozel from \cite{MR2331344}) suggests that the lists of isolated representations in the automorphic duals could be considerably simpler than the same lists  in the unitary duals. While we do not see, at the moment, a way to conjecture much regarding a list of isolated representations in the unitary duals, we may try to guess the following qualitative characterization of the isolated representations in the automorphic duals (which could yield a quantitative description):

\medskip

\noindent
{\bf Question 1.3.}
Is each primitive representation of Arthur type isolated in the automorphic dual?

\medskip

 The key for handling Arthur packets in our paper is the M\oe glin explicit construction of Arthur packets (together with the work of B. Xu related to this). Let us note that the techniques of M\oe glin seem to fit well with the approach to the unitarizability based only on reducibility points. In construction of Arthur packets, knowledge  of the Aubert involution of representations is very useful. Some crucial  ideas for   computation of the involution belong to C. Jantzen. These are principal tools that we use in this paper.

 Recently H. Atobe and A. M\'inguez in \cite{AtMiZel} and H. Atobe in \cite{At-const} made a crucial breakthrough finding algorithms for the Aubert involution and  construction of Arthur packets.

 We are very thankful to C. M\oe glin for a series of discussions and for sharing her results with us. Discussions with E. Lapid  helped us better understand  some ideas on which this paper is based. P. Baki\'c  has read the  paper and gave us a number of useful suggestions, which helped us a lot to improve the style of the paper. We are very thankful to the referee for a very careful reading and a number of  important suggestions and corrections. Thanks to them, this paper is much easier to read.

 The structure of the paper is the following. Section \ref{sec: notation} introduces the notation of the representation theory of classical $p$-adic groups, while   section \ref{MPAP} collects the notation and   some facts about the M\oe glin construction of Arthur packets. Sections \ref{sec: >1}, \ref{sec: 0}, \ref{sec: 1/2} and \ref{sec: 1} bring the constructions of families of Arthur representations corresponding to four types of reducibility points.
 In  section \ref{sec: >1} we consider the case of reducibility $>1$, where details of all the proofs are presented.
In sections \ref{sec: 0}, \ref{sec: 1/2} and \ref{sec: 1} the cases of  reducibility points  $0$, $\tfrac12$ and $1$  are considered respectively. The proofs of the claims in   sections \ref{sec: 0}, \ref{sec: 1/2} and \ref{sec: 1} are obtained using  similar ideas and techniques  as the proofs of the corresponding claims in   section \ref{sec: >1}. Because of this, we completely omit proofs in  sections \ref{sec: 0}, \ref{sec: 1/2} and \ref{sec: 1}. In the last section we show that each irreducible unitarizable subquotient  of a critical point in corank $\leq 3$ is of Arthur class. In   the Appendix we show that some  of the simplest complementary series  can be of Arthur class when reducibility exponent is $>1$.

\section{Notation
}
\label{sec: notation}

We first recall briefly  the well-known notation  for $p$-adic general linear groups established  by J. Bernstein and A. V. Zelevinsky (\cite{MR584084}; see also \cite{MR689531}), and its natural extension to classical  $p$-adic  groups. 

A $p$-adic field $F$ of characteristic zero will be fixed. All representations  considered in this paper will be smooth. The Grothendieck group of the category $\text{Alg}_{\text{f.l.}}(G)$ of all finite length representations  of a connected reductive $p$-adic group $G$ is denoted by 
$$
\mathfrak R(G).
$$
We have a natural map $\text{s.s.}:\text{Alg}_{\text{f.l.}}(G)\rightarrow \mathfrak R(G)$ called semi-simplification. There is a natural partial order on $\mathfrak R(G)$. For two finite length representations $\pi_1$ and $\pi_2$ of $G$, the fact $\ssm(\pi_1)\leq\ssm(\pi_2)$ we will write simply as $\pi_1\leq \pi_2$.
The contragredient representation of $\pi$ will be denoted by $\tilde \pi$. We can lift the mapping $\pi\rightarrow \tilde \pi$ to an additive group homomorphism  $\sim: \mathfrak R(G) \rightarrow \mathfrak R(G)$. 

If $\Pi$ (resp. $\pi$) is a representation (resp. an   irreducible representation) of $G$ , then 
$$
\pi\hui \Pi
$$
will mean that $\pi$ is a unique   irreducible subrepresentation of $\Pi$.

\subsection{Hopf algebra for general linear groups}
The modulus character on $F$ is denoted by $| \ |_F$, and the character $|\det|_F$ of $\GL (n,F)$ by 
$
\nu.
$
Let $n=n_1+n_2, n_i\geq 0$. Denote by $P_{(n_1,n_2)}=M_{(n_1,n_2)}N_{(n_1,n_2)}$ the  parabolic subgroup of $\GL (n,F)$ which is standard with respect to the minimal parabolic subgroup of upper triangular matrices, such that    $M_{(n_1,n_2)}$ is naturally isomorphic to $\GL (n_1,F)\times \GL (n_2,F)$.
For  representations $\pi_i$, $i=1,2$, of $\GL (n_i,F)$, 
  denote
$$
\pi_1\times \pi_2:=\text{Ind}_{P_{(n_1,n_2)}}^{\GL (n,F)}(\pi_1\otimes \pi_2).
$$
Let
$
R:=\underset{n\geq0}{\oplus}\mathfrak R(\GL (n,F).
$
Then $\times $ lifts naturally to a multiplication on $R$, and in this way we get     commutative graded $\Z$-algebra structure on $R$. We can factorise 
$\times:R\times R\rightarrow R$ through
$
m:R\otimes R\rightarrow R.
$

The normalised Jacquet module with respect to $P_{(n_1,n_2)}$ of a representation $\pi$ of $\GL (n,F)$ is denoted by
$
r_{(n_1,n_2)}(\pi).
$
If $\pi$  is of finite length, then we can set
$$
m^*(\pi):=\sum_{k=0}^n \text{s.s.}(r_{(k,n-k)}(\pi))\in R\otimes R.
$$
One extends additively $m^*$ on whole $R$, and gets  graded Hopf algebra structure on $R$.

\subsection{Segments and corresponding irreducible subrepresentations}
Denote by $\mathcal C$ (resp. $\mathcal D$) the set of all equivalence classes of irreducible cuspidal (resp. essentially square integrable) representations of all $\GL (n,F)$, $n\geq1$.  
For $\delta\in \mc D$, there exists unique $e(\delta)\in \R$ and unitarizable $\delta^u\in \mc D$ such that
$$
\delta=\nu^{e(\delta)}\delta^u.
$$
For $\rho\in\mathcal C$ and $x,y\in \R$ such that $y-x\in\Z_{\geq0}$, the set
$
[\nu^x\rho,\nu^y\rho]:=\{\nu^x\rho,\nu^{x+1}\rho,\dots,\nu^y\rho\}
$
is called a segment of cuspidal representations of general linear groups (one-point segment $[\nu^x\rho,\nu^x\rho]$ will be denoted simply by $[\nu^x\rho]$).We denote it also by
$$
[x,y]^{(\rho)},
$$
or simply by $[x,y]$ when we will work with a fixed $\rho$ (usually this will be the case later). The set of all segments of cuspidal representations is be denoted by $\mathcal S(\mathcal C)$.
The representation
$\nu^x\rho\times\nu^{x+1}\rho\times\dots\times\nu^y\rho$ (resp. 
$\nu^y\rho\times\nu^{y-1}\rho\times\dots\times\nu^x\rho$) contains a unique irreducible subrepresentation which will be denoted by 
$$
\text{$\la\nu^x\rho,\nu^{x+1}\rho,\dots,\nu^y\rho\ra$ (resp. 
$\la\nu^y\rho,\nu^{y-1}\rho,\dots,\nu^x\rho\ra$).}
$$
  When we  deal with a fixed $\rho$, these representations will be denoted simply by 
$
\la x,x+1,\dots,y\ra, \ (\text{resp. } \la y,y-1,\dots,x\ra).
$
For a segment $[x,y]^{(\rho)}\in \mc S(\mc C)$ denote
$
\delta([x,y]^{(\rho)}):= \la\nu^y\rho,\nu^{y-1}\rho,\dots,\nu^x\rho\ra.
$
Then $\delta([x,y]^{(\rho)})\in \mc D$. For $n\geq1$ set
$
\delta(\rho,n):=\delta([-\tfrac{n-1}2,\tfrac{n-1}2]^{(\rho)}).
$

Let $\pi$ be an irreducible representation of a general linear group. Then there exist $\rho_1,\dots,\rho_k\in \mc C$ such that $\pi\h  \rho_1\times\dots\times\rho_k$. The multiset $(\rho_1,\dots,\rho_k)$ is called the (cuspidal) support of $\pi$, and  is denoted by $\supp(\pi)$.

\subsection{Langlands classification for general linear groups}
\label{LC-GL}
For a set $X$, denote by $M(X)$ the set of all finite multisets in $X$. For $d=(\delta_1,\dots,\delta_k)\in M(\mc D)$ chose a permutation $p$ of $\{1,\dots,k\}$ such that $e(\delta_{p(1)})\geq \dots \geq e(\delta_{p(k)})$. Then the representation
$
\lambda(d):= \delta_{p(1)}\times \dots \times \delta_{p(k)}
$
has a unique irreducible quotient, denoted by
$
L(d).
$
This defines a bijection  from $M(\mc D)$ onto the set of equivalence classes of irreducible representations of all groups $\GL (n,F)$, $n\geq 0$ (Langlands classification). 
Another way to express this classification is by $M(\mc S(\mc C))$. To $a=(\Delta_1,\dots,\Delta_k)\in M(\mc S(\mc C))$ attach
$$
L(a):=L(\delta(\Delta_1), \dots ,\delta(\Delta_k)).
$$
This is the version of the Langlands classification  which  we will  use in the paper.
For $n,m\geq1$ and $\rho\in\mc C$ we denote  by
$$
u_{\rho}(n,m):=
L(\nu^{\tfrac{m-1}2}\delta(\rho,n),\nu^{\tfrac{m-1}2-1}\delta(\rho,n),
\dots,
\nu^{-\tfrac{m-1}2}\delta(\rho,n)),
$$
and call it a Speh representation.

\subsection{Module and comodule structures for classical groups}
In this paper we consider classical groups $\Sp (2n,F)$ and split $\SO (2n+1,F)$, $n\geq 0$. We will use  their matrix realisation from \cite{MR1356358}. Such a group of rank $n$ will be denoted by $S_n$ (we will always work with a fixed series of groups). We fix in $S_n$ a minimal parabolic subgroup consisting of all upper triangular matrices in the group. Now for each $0\leq k\leq n$, there is a standard parabolic subgroup $P_{(k)}=M_{(k)}N_{(k)}$ such that $M_{(k)}$ is naturally isomorphic to the direct product $\GL (k,F)\times S_{n-k}$. For  representations $\pi$ and $\sigma$ of $\GL (k,F)$ and $ S_{n-k}$ respectively, one  defines
$
\pi\rtimes\sigma:=\text{Ind}_{P_{(k)}}^{S_n}(\pi\otimes\sigma).
$
Denote
$
R(S):=\underset{n\geq0}{\oplus}\mathfrak R(S_n).
$
Then $\rtimes$ lifts in a natural way to $\rtimes:R\times R(S)\rightarrow R(S)$, and in this way $R(S)$ becomes an $R$-module.
The normalised Jacquet module with respect to $P_{(k)}$ of a representation $\pi$ of $S_n$ is denoted by
$
s_{(k)}(\pi).
$
Let $\pi$  be of finite length. Then we set
$$
\mu^*(\pi):=\sum_{k=0}^n \text{s.s.}(s_{(k)}(\pi))\in R\otimes R(S),
$$
and extend $\mu^*$ additively to
$
\mu^*:R(S)\rightarrow R\otimes R(S).
$
In this way, $R(S)$ becomes an $R$-comodule.

\subsection{Twisted Hopf algebra}
Denote by $\kappa:R\otimes R\rightarrow R\otimes R$ the transposition map 
$\sum_i x_i\otimes y_i\mapsto \sum_i y_i\otimes x_i$, and by
\begin{equation}
\label{twisted hopf}
M^*:=(m\otimes \text{id}_R)\circ (\sim\otimes m^*)\circ \kappa\circ m^*:
R\rightarrow R\otimes R.
\end{equation}
Then for finite length  representations $\pi$ and $\sigma$ of $\GL (n,F)$ and $ S_{k}$ respectively, by \cite{MR1356358} we have
$$
\mu^*(\pi\rtimes \sigma)=M^*(\pi)\rtimes \mu^*(\pi).
$$
Denote by $M^*_{GL}(\pi)\otimes 1$ the component of $M^*(\pi)$ which is in $\mathfrak R(\GL (n,F))\otimes \mathfrak R(\GL (0,F))$. We calculate $M^*_{GL}(\pi)$ by  the following simple formula: if $m^*(\pi)=\sum_i x_i\otimes y_i, $ then 
$
M^*_{GL}(\pi)=\sum_i x_i\times \tilde y_i.
$
The component of $M^*(\pi)$ which is in $\mathfrak R(\GL (0,F))\otimes \mathfrak R(\GL (n,F))$ is 
$
1\otimes\pi.
$

Let additionally $\sigma$ be an irreducible cuspidal representation of a classical group, and $\tau$ a subquotient of $\pi\rtimes\sigma$. Then we denote
$
s_{\GL }(\tau):= s_{(n)}(\tau).
$
Now for a  finite length representation $\pi'$ of $\GL (n,F)$ we have 
$
\ssm(s_{\GL }(\pi'\rtimes \tau))=M^*_{GL}(\pi')\times \ssm(s_{GL}(\tau)).
$

\subsection{Langlands classification for classical groups} 
\label{LC CG}
Denote by $\Irrcl$ the set of equivalence classes of all irreducible  representations of groups $S_n,n\geq0$, and by  $\Tempcl$ the subset of the tempered representations in $\Irrcl$. Let
$
\Disc_+=\{\delta\in \Disc: e(\delta)>0\}.
$
Take
$
t=((\delta_1,\delta_2,\dots,\delta_k),\tau)\in
M(\Disc_+)\times \Tempcl.
$
Chose a permutation $p$ of $\{1,\dots,k\}$ such that
$
e(\delta_{p(1)})\geq e(\delta_{p(2)})\geq\dots\geq e(\delta_{p(k)}).
$
Then, the representation
$
\lambda(t):=\delta_{p(1)}\times\delta_{p(2)}\times\dots\times\delta_{p(k)}\rtimes \tau
$
has a unique irreducible  irreducible quotient, denoted by
$
L(t).
$
The mapping
$
t\mapsto L(t)
$
defines a bijection between $M(\Disc_+)\times \Tempcl$ and $\Irrcl$, and it is the Langlands classification for classical groups (the multiplicity of $L(t)$ in $\lambda(t) $ is one).
If $t=(d;\tau)$, then $L(d;\tau)\tilde{\ }\cong L(d;\tilde\tau)$ and  $L(d;\tau)\bar{\ }\cong L(\bar d;\bar\tau)$, where $\bar\pi$ denotes the complex conjugate representation of $\pi$.

 Introducing
$
\SC_+=\{\Delta\in\SC; e( \delta (\Delta))>0\},
$
 we can define in a natural way the Langlands classification 
 $
 (a,\tau)\mapsto L(a;\tau)
 $
  using parameters in $M(\SC_+)\times \Tempcl$. We will use this classification in this paper.

\subsection{Duality} There is a natural  involution $D_G$  on the Grothendieck group of the representations
of any connected reductive $p$-adic group $G$ (\cite{MR1285969} and \cite{MR1471867}, see also \cite{MR3769724}). It
 takes any irreducible representation to an irreducible representation up to a sign.
For any irreducible representation $\pi$, let $\pi^t$ be the irreducible representation such that $D_G(\pi)=\pm\pi^t$.
We call $\pi^t$ the Aubert involution of $\pi$, or \ASS dual of $\pi$.
This involution is compatible with parabolic induction in the sense that
$
(\pi\rtimes\tau)^t=\pi^t\rtimes\tau^t
$
(on the level of Grothendieck groups).
Furthermore, regarding Jacquet modules, the mapping
$$
\pi_1\otimes\dots\pi_l\otimes\mu\mapsto \tilde\pi_1^t\otimes\dots\tilde\pi_l^t\otimes\mu^t,
$$
is a bijection from the semi-simplification of $s_\beta(\pi)$ onto the semi simplification of $s_\beta(\pi^t)$
($\beta$ is the partition which parameterises the corresponding parabolic subgroup).

We will use  the following  result: for $\D\in \SC$ and cuspidal $\sigma \in \Irrcl$  we have
$$
\delta(\D)\rtimes\s \text{ is reducible }\iff \theta\rtimes\s \text{ is reducible for some }\theta \in \D.
$$
This result follows from  Theorem 13.2. of \cite{MR1658535}.
To get the above result from  the this theorem, one needs to know that the cuspidal reducibility exponents are in $\tfrac12\Z$, which is implied by the basic assumption from \cite{MR1896238}. This assumption   follows from  (ii) in Remarks 4.5.2 of \cite{MR2305609} and Theorem 1.5.1 in \cite{MR3135650}.

An irreducible representation will be called cotempered, if it is the Aubert involution of a tempered representation.

\subsection{Some formulas for $M^*$} Let $\rho\in\mc C$ be selfcontragredient and $[x,y]^{(\rho)}\in\mc S(\mc C)$.
Then, one easily gets  
\begin{equation} \label{M-seg}
M^*\big(\delta([x,y]^{(\rho)})\big) =
\sum_{i= x-1}^{ y}\sum_{j=i}^{ y}
\delta([-i,-x]^{(\rho)})\times\delta([j+1,y]^{(\rho)}) \otimes\delta([i+1,j]^{(\rho)}).
\end{equation}
In the above formula and the formulas below, we take terms of the form $[t,t-1]^{(\rho)}$ to be the identity of $R$, i.e. to be $L(\emptyset)$. In particular
\begin{equation}\label{M-seg-GL}
M_{\GL}^*\big(\delta([x,y]^{(\rho)})\big) =
\sum_{i= x-1}^{ y}\delta([-i,-x]^{(\rho)})\times\delta([i+1,y]^{(\rho)}).
\end{equation}
We denote the multiset 
$
([x]^{(\rho)}, [x+1]^{(\rho)},\dots, [y]^{(\rho)})
=
([x]^{(\rho)})+( [x+1]^{(\rho)})+\dots+( [y]^{(\rho)})
$
by  
$$
([x,y]^{(\rho)})\tr.
$$
Then 
$
\la\nu^x\rho,\nu^{x+1}\rho,\dots,\nu^y\rho\ra
=
L(([x,y]^{(\rho)})\tr).
$
Now
\begin{multline}
\label{M*segment-Z}
M^*(L(([x,y]^{(\rho)})\tr))=
\\
\sum_{x -1\leq i \leq y}\sum_{ x -1\leq j \leq i}
L (([-y,-i -1]^{(\rho)})\tr)\times L (([x ,j]^{(\rho)})\tr)\otimes L (([j +1,i]^{(\rho)})\tr),
\end{multline}
\begin{gather}
\label{M*segment-Z-GL}
M^*_{GL}(L(([x,y]^{(\rho)})\tr))=
\sum_{i=x-1}^y
L (([-y,-i -1]^{(\rho)})\tr)\times L (([x ,i]^{(\rho)})\tr).
\end{gather}

\subsection{Some very simple irreducible square integrable and tempered  representations of classical groups} 
\label{simple SIR}
Let $\rho$ and $\sigma$ be irreducible cuspidal representations of a general linear and a classical group respectively, and suppose that $\rho$ is selfcontragredient (i.e. $\rho\cong\tilde\rho$). Then there exists a unique $\alpha_{\rho,\sigma}\in\tfrac12\Z_{\geq0}$ such that 
$$
\nu^{\alpha_{\rho,\sigma}}\rho\rtimes\sigma
$$
reduces. We denote $\alpha_{\rho,\sigma}$ simply by $\alpha$ once  we fix $\rho$ and $\sigma$.

Suppose $\alpha>0$ and $n\geq0$. Then the representation $\delta([\nu^\alpha\rho,\nu^{\alpha+n}\rho])\rtimes\sigma$ contains a unique irreducible representation, which is denoted by
$$
\delta([\nu^\alpha\rho,\nu^{\alpha+n}\rho];\sigma).
$$
This representation is  square integrable, and it is called a generalised Steinberg representation. Further, $\delta([\nu^\alpha\rho,\nu^{\alpha+n}\rho];\sigma)$ is the unique irreducible  subrepresentation of 
$\nu^{\alpha+n}\rho\rtimes \delta([\nu^\alpha\rho,\nu^{\alpha+n-1}\rho];\sigma)$. By \cite[Theorem 6.3, (viii)]{MR1600280} we have
\begin{equation}
\label{Steinberg-mu-star}
\mu^*(\delta([\nu^\alpha \rho, \nu^{\alpha + n}\rho],\sigma)) =
\sum^n_{k=-1} \delta([\nu^{\alpha + k + 1}\rho, \nu^{\alpha +
n} \rho]) \otimes \delta([\nu^\alpha \rho, \nu^{\alpha + k}
\rho], \sigma),
\end{equation}
where we take $\delta(\emptyset,\sigma)=\sigma$.

Starting from generalised Steinberg representations, one can construct further (strongly positive) square integrable representations (\cite{MR1913095} and \cite{MR1896238} contain a  general construction of such representations; see also section 34 of \cite{MR2908042}). We will   describe here only the first step of the construction. Let $\alpha\geq\tfrac32$. Take $m\in\Z_{\geq0}$ such that $m\leq n$. Then the representation 
$
\delta([\nu^{\alpha-1}\rho,\nu^{\alpha-1+m}\rho])\rtimes \delta([\nu^\alpha\rho,\nu^{\alpha+n}\rho];\sigma)
$
has a unique irreducible subrepresentation, denoted by 
$$
\delta_{\spsi}([\nu^{\alpha-1}\rho,\nu^{\alpha-1+m}\rho],[\nu^\alpha\rho,\nu^{\alpha+n}\rho];\sigma).
$$
 This representation is square integrable.

Suppose  $\alpha>0$. Take $x,y\in \R$ such that  $ x\leq y$ and 
$x-\alpha, \ y-\alpha\in \Z_{\geq0}$. Then the representation $\delta([\nu^{-x}\rho,\nu^{y}\rho])\rtimes\sigma$ contains precisely two irreducible subrepresentations. If $\alpha\in \Z$ (resp. $\tfrac12+\Z$), then precisely one of these subrepresentations contains in its Jacquet module the term $\delta([\nu\rho,\nu^{x}\rho])\times \delta([\rho,\nu^{y}\rho])\otimes\sigma$ (resp. $\delta([\nu^{\frac12}\rho,\nu^{x}\rho])\times \delta([\nu^{\frac12}\rho,\nu^{y}\rho])\otimes\sigma$). We denote this irreducible subrepresentation by
$$
\delta([\nu^{-x}\rho,\nu^{y}\rho]_+;\sigma)
$$
and the other irreducible subrepresentation by
$
\delta([\nu^{-x}\rho,\nu^{y}\rho]_-;\sigma).
$
Both subrepresentations are square integrable if $x<y$, and tempered (but not square integrable) otherwise (see \cite{MR1698200} for more details).

Assume now $\alpha=0$ and $n\geq0$. Take irreducible tempered representations 
$\delta([\rho]_\pm;\sigma)$
such that
\begin{equation}
\label{pm}
\rho\rtimes\sigma:= \delta([\rho]_+;\sigma)\oplus \delta([\rho]_-;\sigma).
\end{equation}
If $\sigma$ is generic, then we take $\delta([\rho]_+;\sigma)$ to be a generic summand.
 Then the representation $\delta([\nu\rho,\nu^{\alpha+n}\rho])\rtimes \delta(\rho_\pm;\sigma)$ contains a unique irreducible subrepresentation, which is denoted by
$
\delta([\rho,\nu^{n}\rho]_\pm;\sigma).
$
These representations are  square integrable for $n\geq1$. Further for $n\geq1$, $\delta([\rho,\nu^{\alpha+n}\rho]_\pm;\sigma)$ is the unique irreducible  subrepresentation of 
$\nu^{\alpha+n}\rho\rtimes \delta([\rho,\nu^{\alpha+n-1}\rho]_\pm;\sigma)$.

Let now
$\alpha=1$ and $n\geq1$. Then 
$
\rho\rtimes \delta([\nu\rho,\nu^{n}\rho];\sigma)
$
decomposes into a direct sum of two irreducible (tempered) representations, which we denote by
$$
\tau([\rho]_\pm; \delta([\nu\rho,\nu^{n}\rho];\sigma)).
$$
The representation $\tau([\rho]_+; \delta([\nu\rho,\nu^{n}\rho];\sigma))$ is characterised by the fact that 
$\delta([\rho,\nu^{n}\rho])\otimes\sigma$ is in its Jacquet module.

\subsection{Jantzen lemma}
\label{jantzen lemma}
 \begin{definition}  Let $\pi$ be an irreducible representation of some $S_n$ and $\rho\in\mc C$. 
\begin{enumerate}
 \item
 We let 
 $
 \mu^*_{\rho}(\pi)
 $
  be the sum (in the corresponding Grothendieck group)  of all irreducible terms in $\mu^*(\pi)$ of the form 
$\rho\otimes\tau$.

   \item
   We let 
 $$
 \Jac_{\rho}(\pi)
 $$
  be the sum in $R(S)$  of all $\tau$ when irreducible $\rho\otimes\tau$ runs over $ \mu^*_{\rho}(\pi)$. 
   \end{enumerate}
\end{definition}

\noindent
Observe that
  $
   \mu^*_{\rho}(\pi)=\rho\otimes  \Jac_{\rho}(\pi).
  $
By Lemma 5.6 of \cite{MR3713922} we have
$
 \Jac_{\rho_1}\circ  \Jac_{\rho_2}= \Jac_{\rho_2}\circ  \Jac_{\rho_1}\text{ if } \rho_1\not\in\{\nu\rho_2,\nu^{-1}\rho_2\}.
$
Below we recall of Lemma 3.1.3 from \cite{MR3268853} (in a slightly less general form).

\begin{definition} 
\label{Janzen def} 
Let $\pi$ be an irreducible representation of some $S_n$ and $\rho\in\mc C$. Denote by 
$
f_\pi(\rho)
$
  the largest value of f such that some Jacquet module of $\pi$  contains   an irreducible  subquotient  of the form 
$\rho\otimes\dots\otimes\rho\otimes\tau$, where $\rho$ shows up $f$ times.
 We let 
 $$
 \mu^*_{\{\rho\}}(\pi)
 $$
  be the sum of all irreducible terms   in $\mu^*(\pi)$ of the form 
$\rho\times\dots\times\rho\otimes\tau$, where $\rho$ shows up $f_\pi(\rho)$ times in the last formula and
   $\tau$ is irreducible.
\end{definition}

\begin{lemma}
\label{lemma-Jantzen} Let $\pi$ be an irreducible representation of some $S_n$ and $\rho\in\mc C$. Then there is a unique irreducible representation  $\theta$  and unique $f\geq0$ such that the following are all satisfied:
\begin{enumerate}
\item $\pi\h\lambda\rtimes\theta$, where $\lambda:=\rho\times\dots\times\rho$ and $\rho$ shows up $f$ times in the last formula.
\item $\mu^*_{\rho}(\theta) = 0$.
\end{enumerate}
Furthermore, $f = f_\pi(\rho)$ and all irreducible subquotients of $\mu^*_{\{\rho\}}(\pi)$ are isomorphic to $\lambda\otimes\theta$.

 If   $\rho\not\cong\tilde \rho$, then $\mu^*_{\{\rho\}}(\pi)$ is irreducible (i.e. the multiplicity of $\lambda\otimes\theta$ in $\mu^*_{\{\rho\}}(\pi)$ is one) and $\pi\h\lambda\rtimes\theta$ is its unique irreducible subrepresentation. In particular, if $\pi'$ is an irreducible representation with $\mu^*_{\{\rho\}}(\pi)=\mu^*_{\{\rho\}}(\pi')$, then $\pi\cong \pi'$.
 
\end{lemma}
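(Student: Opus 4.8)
The plan is to argue entirely with Jacquet modules, using the comodule formula $\mu^*(\pi'\rtimes\sigma)=M^*(\pi')\rtimes\mu^*(\sigma)$ together with the fact that, for a cuspidal $\rho$ (say a representation of $\GL(d,F)$), $\rho$ has no nontrivial Jacquet modules, so $M^*(\rho)=(\rho+\tilde\rho)\otimes 1+1\otimes\rho$, and $M^*(\rho)=2\,\rho\otimes 1+1\otimes\rho$ when $\rho\cong\tilde\rho$. Put $f:=f_\pi(\rho)$; it is finite, bounded in terms of the cuspidal support of $\pi$. Note that $\lambda:=\rho\times\dots\times\rho$ ($f$ factors) is irreducible, being an $f$-fold product of the unlinked cuspidal $\rho$, and that it is the unique irreducible representation of $\GL(fd,F)$ whose Jacquet module contains $\rho\otimes\dots\otimes\rho$ — equivalently, the unique irreducible representation of $\GL(fd,F)$ with cuspidal support $f$ times $[\rho]$.

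\emph{Existence.} By the definition of $f$, some iterated Jacquet module of $\pi$ has an irreducible subquotient $\rho\otimes\dots\otimes\rho\otimes\tau$ with $f$ leading copies of $\rho$; transitivity of Jacquet functors then shows $s_{(fd)}(\pi)$ has an irreducible subquotient $\lambda\otimes\theta$ for some irreducible $\theta$ of $S_{n-fd}$. The first point requiring care is to upgrade this to an honest embedding $\pi\h\lambda\rtimes\theta$: this follows from a cosocle/Frobenius-reciprocity argument that uses the cuspidality of $\rho$ (forcing the $\lambda\otimes(\cdot)$-part of the Jacquet module into the cosocle), possibly after replacing $\theta$ by another irreducible with the same properties. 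Next one shows $\mu^*_{\rho}(\theta)=0$: otherwise $\Jac_\rho(\theta)\neq 0$, and peeling one further copy of $\rho$ off $\theta$ and feeding it back through $\pi\h\lambda\rtimes\theta$ via the $M^*$-formula produces a Jacquet module of $\pi$ with a leading run of $f+1$ copies of $\rho$, contradicting the maximality of $f$.

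\emph{The remaining assertions.} I would compute the $\lambda\otimes(\cdot)$-part of $\mu^*(\lambda\rtimes\theta)$ explicitly. Since $M^*\colon R\to R\otimes R$ is an algebra homomorphism — being the composition of the coproduct $m^*$, the transposition $\kappa$, $\sim\otimes m^*$ and $m\otimes\text{id}$, all of which are algebra maps because $R$ is commutative — we have $M^*(\lambda)=M^*(\rho)^{f}$, hence $\mu^*(\lambda\rtimes\theta)=M^*(\rho)^{f}\rtimes\mu^*(\theta)$. In a term $(a\times c)\otimes(b\rtimes d)$ of this expansion whose general linear part has an irreducible constituent equal to $\lambda$, comparing cuspidal supports forces $a$ and $c$ to be products of copies of $\rho$; and $\mu^*_{\rho}(\theta)=0$ forbids $c$ from containing any copy of $\rho$, since otherwise some $s_{(d)}(\theta)$ would begin with $\rho$. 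Thus $a=\lambda$, $c=1$ and $d=\theta$ is the $s_{(0)}$-term of $\mu^*(\theta)$, so the only irreducible subquotient of $\mu^*_{\{\rho\}}(\lambda\rtimes\theta)$ is $\lambda\otimes\theta$, occurring with multiplicity $1$ if $\rho\not\cong\tilde\rho$ and $2^{f}$ if $\rho\cong\tilde\rho$; in particular $f_{\lambda\rtimes\theta}(\rho)=f$. From $\pi\h\lambda\rtimes\theta$ we get $\mu^*(\pi)\leq\mu^*(\lambda\rtimes\theta)$, so $f_\pi(\rho)\leq f$; Frobenius reciprocity forces $\lambda\otimes\theta$ to be a quotient of $s_{(fd)}(\pi)$, hence it occurs in $\mu^*(\pi)$ and $f_\pi(\rho)=f$. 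Therefore $\mu^*_{\{\rho\}}(\pi)$ is a positive multiple of $\lambda\otimes\theta$, its only irreducible subquotient; applying all of this to a second pair $(f',\theta')$ satisfying (1)--(2) gives $f'=f_\pi(\rho)=f$ and $\lambda\otimes\theta'=\lambda\otimes\theta$, hence $\theta'\cong\theta$: this is uniqueness. When $\rho\not\cong\tilde\rho$, the multiplicity $1$ above says $\mu^*_{\{\rho\}}(\pi)=\lambda\otimes\theta$ is irreducible; if two non-isomorphic irreducibles embedded in $\lambda\rtimes\theta$, then (exactness of $s_{(fd)}$ and Frobenius reciprocity) the multiplicity of $\lambda\otimes\theta$ in $s_{(fd)}(\lambda\rtimes\theta)$ would be $\geq 2$, a contradiction, so $\pi$ is the unique irreducible subrepresentation, i.e. $\pi=\soc(\lambda\rtimes\theta)$, and consequently $\pi$ is determined by $\mu^*_{\{\rho\}}(\pi)$.

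\emph{Main obstacle.} The substantive part is the bookkeeping in that last expansion: one must rule out that a ``wrong'' term of the geometric lemma — one involving $\tilde\rho$ when $\rho\not\cong\tilde\rho$, a term supported on a two-point segment built from $\rho$ and $\tilde\rho$, or a copy of $\rho$ migrating out of the classical side of $\mu^*(\theta)$ — contributes either an irreducible subquotient $\lambda\otimes\theta''$ with $\theta''\not\cong\theta$ or a Jacquet module of $\pi$ with leading $\rho$-run exceeding $f$. The two facts that make this go through are the hypothesis $\mu^*_{\rho}(\theta)=0$ (equivalently: no Jacquet module of $\theta$ begins with $\rho$) and the uniqueness of the irreducible general linear representation with cuspidal support a multiple of $[\rho]$; the self-dual case differs only in that the coefficient $2^{f}$ replaces $1$, which is precisely why multiplicity one — and the unique-subrepresentation statement — is asserted only for $\rho\not\cong\tilde\rho$.
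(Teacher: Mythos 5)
The paper does not actually prove this lemma: it is recalled (in slightly less general form) from Lemma 3.1.3 of \cite{MR3268853}, so there is no in-paper argument to measure yours against. Judged on its own terms, your reconstruction is the standard one and the parts you carry out are correct: the identity $M^*(\rho)=(\rho+\tilde\rho)\otimes 1+1\otimes\rho$, the observation that $M^*$ is a ring homomorphism because $R$ is commutative, the expansion $\mu^*(\lambda\rtimes\theta)=M^*(\rho)^f\rtimes\mu^*(\theta)$, the cuspidal-support bookkeeping that (using $\mu^*_\rho(\theta)=0$ and the fact that $\lambda$ is the unique irreducible representation of $\GL(fd,F)$ with support $f$ copies of $\rho$) forces every $\lambda\otimes(\cdot)$-constituent to be $\lambda\otimes\theta$ with multiplicity $1$ (resp. $2^f$ in the selfdual case), and the deduction of $f=f_\pi(\rho)$, of uniqueness of $(f,\theta)$, and of the unique-subrepresentation statement from multiplicity one via Frobenius reciprocity and exactness of $s_{(fd)}$.

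The one step you assert rather than prove is the upgrade from ``$\lambda\otimes\theta$ is a subquotient of $s_{(fd)}(\pi)$'' to ``$\pi\h\lambda\rtimes\theta$''. As literally phrased, ``the $\lambda\otimes(\cdot)$-part of the Jacquet module is forced into the cosocle'' is not correct: that part need not be semisimple, so it need not lie in the cosocle. What is true, and what makes the step work, is that the irreducible constituents of $s_{(fd)}(\pi)$ whose $\GL(fd,F)$-factor has cuspidal support consisting of $f$ copies of $\rho$ (equivalently, whose $\GL$-factor equals $\lambda$) lie in a Bernstein block of $\GL(fd,F)\times S_{n-fd}$ different from the blocks of all the other constituents; there are no extensions across blocks, so this part of $s_{(fd)}(\pi)$ is a direct summand, its cosocle contributes to the cosocle of $s_{(fd)}(\pi)$, and Frobenius reciprocity then gives $\pi\h\lambda\rtimes\theta$ for some irreducible $\theta$ occurring in that cosocle --- exactly the ``possibly after replacing $\theta$'' you allow for; the maximality of $f$ then kills $\mu^*_\rho(\theta)$ as you say. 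With this block-decomposition argument supplied, your proof is complete.
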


\begin{remark}
\label{rem jantzen}
\begin{enumerate}
\item If $ \mu^*_{\rho}(\pi)=\rho\otimes \theta$ , then $ \mu^*_{\tilde\rho}(\pi\tr)=\tilde\rho\otimes \theta\tr$.

\item $ \Jac_{\rho}(\pi)\tr= \Jac_{\tilde\rho}(\pi\tr)$.

\item
\label{for comp inv}
 If $\rho\not\cong\tilde\rho$ and $ \mu^*_{\{\rho\}}(\pi)=\lambda\otimes \theta$ , then $ \mu^*_{\{\tilde\rho\}}(\pi\tr)=\tilde\lambda\otimes \theta\tr$ and
$
\pi\tr\h\tilde\lambda\rtimes \theta\tr
$
as the unique irreducible subrepresentation.

\item 
Let $\rho\in\mc C$, $x\in\R$, $x{\ne0}$, and let $\pi,\pi'$ be irreducible representations of $S_n$ and $S_m$. Suppose $\pi\h\nu^x\rho\rtimes\pi'$, $\Jac_{\nu^{-x}\rho}(\pi)\ne0$ and $\Jac_{\nu^{-x}\rho}(\pi')=0.$ Then $\nu^x\rho\rtimes\pi'$ is irreducible $($\cite[Remark in 2.3]{MR2209850}$)$.
\end{enumerate}
\end{remark}

\section{Parameters of A-packets}
\label{MPAP}

In this section we recall the well-known terminology related to A-packets following mainly C. M\oe glin.

\subsection{A-parameters} For an irreducible cuspidal representation $\rho$ of $GL(n_\rho,F)$ (this defines $n_\rho$),   $\rho$ will also  denote the corresponding irreducible representation of the Weil group $W_F$ under the local Langlands correspondence for general linear groups. The  irreducible algebraic representation of $\slc$ of dimension $a$ over $\C$ is denoted by 
$
E_a.
$
A triple $\rab$, $\rho\in \mc C$, $a,b\in\Z_{>0}$ is called a Jordan block.
To shorten notation in the paper, we  denote
$$
E_{a,b}^\rho:=\rho\otimes E_a\otimes E_b.
$$
For a connected reductive group $G$ over $F$, the connected component  of the dual group $^LG$ is denoted by $^LG^0$, and called the complex dual group. Then $^L \hskip-2pt \Sp(2n,F)^0=\SO(2n+1,\C)$ and $^L \hskip-2pt \SO(2n+1,F)^0=\Sp(2n,\C)$. Set $n^*=2n+1$ (resp. $n^*=2n$) if $G=\Sp(2n,F)$ (resp. $G=\SO(2n+1,F)$).

\begin{definition}An A-parameter for the group $S_n$ is  a continuous homomorphism $\psi: \wddg \rightarrow \ ^L \hskip-2pt S_n^0 $, which is bounded on $W_F$ and is (complex) algebraic on $\slc\times\slc$.
\end{definition}

We can decompose  $\psi$ as above into a the sum of irreducible representations
\begin{equation}
\label{psi-jord}
\psi=
\underset{\rab\in\Jord(\psi)}
{\boldsymbol\oplus} \ E_{a,b}^\rho
\end{equation}
where 
$
Jord(\psi)
$
is a finite multiset,  which is called the   Jordan block of $\psi$. Then we have $\sum_{\rab\in\Jord(\psi)}n_\rho a b=n^*$. Clearly, $\jp$ determines $\psi$ (up to an equivalence). We can work with $\Jord(\psi)$ as the A-parameter instead of $\psi$.
For a finite multiset   of  $(\rho,a,b)$'s, some additional conditions may  be needed so that this multiset is the set of Jordan blocks of an A-parameter (we will not discuss these conditions here). Denote
$$
\jrp=((a,b);(\rho,a,b)\in\Jord(\psi)).
$$
The set of all equivalence classes of A-parameters of $S_n$ will be denoted by $\Psi(S_n)$, and $\Psi=\cup_{n\geq 0}\Psi(S_n)$ (we will indicate the series of  groups we are woring  with  when this is necessary).

One  says that $\rab\in Jord(\psi)$ has good parity (with respect to $S_n$) if there exist $z\in \Z$ 
such that
$
\nu^{\frac{a+b}2+z}\rho\rtimes 1_{S_0}
$
reduces. We say that $ \Jord(\psi)$ has good parity if each of  its elements has good parity.
The subset of A-parameters of good parity will be denoted by
$$
\Pgp.
$$ 
In this paper  we will only work  with A-parameters  of good parity. This implies that the cuspidal representation $\rho$ will  always be selfcontragredient.

To an A-parameter $\psi$  one can associate an irreducible unitarizable representation
$$
\pi_\psi:=\underset{(\rho,a,b)\in Jord(\psi)} \times u_\rho(a,b)
$$
of a general linear group over $F$ (we can work with $\pi_\psi$ as the A-parameter instead of $\psi$).

\subsection{Another parameterisation  of Jordan blocks} 
\label{another-par}
Let $(\rho,a,b)\in Jord(\psi)$, $\psi\in\Pgp$. Put
 $$ 
 A=\tfrac{a+b}2-1, \quad B=\tfrac{|a-b|}2
 $$
and  $\zeta_{a,b}=sign(a-b)$ if $a\ne b$, and $\zeta_{a,b}=1$ arbitrary element of $\{\pm1\}$ otherwise.
 Obviously  either $A,B\in \Z_{\geq0}$ or $A,B\in \tfrac12+\Z_{\geq0}$.
 Observe that
 $$
 a=A+1+\zeta_{a,b}B, \qquad b  =A+1-\zeta_{a,b}B.
 $$
The Jordan block $(\rho,a,b)$ will be also denoted  by 
$$
(\rho,A,B,\zeta_{a,b}).
$$

\subsection{Modifying  A-parameters}
\label{modifying}
Fix a series of classical groups   and  $\psi\in\Pgp$. 
Let $\rab$ and $\rabp$ be two Jordan blocks which satisfy
\begin{equation}
\label{both parity}
\tilde\rho\cong\rho, \qquad a\equiv a' (\text{mod }2\Z), \qquad b\equiv b' (\text{mod }2\Z).
\end{equation}
Then:

\begin{enumerate}
\item
If $\rab\in\jrp$, and if we define a new parameter $\psi'$ by replacing $\rab$ by $\rabp$ in $\Jord(\psi)$, then
$
\psi'\in\Pgp.
$

\item
If $\rab$ has good parity, then
 $
 \psi \ \oplus \    E_{a,b}^\rho \ \oplus \ E_{a',b'}^\rho
 $
 has good parity as well.
 
 \item
If $\rab$ has good parity and $ab\in2\Z$, then 
$
 \psi \ \oplus \    E_{a,b}^\rho
$
 has good parity as well.
\end{enumerate}

\subsection{Elementary A-parameters} 
A Jordan block $\rab$ will be called elementary if $1\in\{a,b\}$.
An A-parameter $\psi$  will be called elementary if it has good parity and if  each $(\rho,a,b)\in Jord(\psi)$ is elementary.
The last condition means  that for each $(\rho,A,B,\zeta)\in Jord(\psi)$, we have $A=B$. The subset of elementary A-parameters in $\Psi$ (and $\Pgp$) is denoted by
$$
\Pe.
$$
 Let $\psi$ be elementary. Then using the parameterisation introduced in \ref{another-par}, each element in the Jordan block can be written as 
 $
 \textstyle
 (\rho, \frac{c-1}2,\frac{c-1}2,\delta_c),
 $
  where $c\in \Z_{> 0}$,  $\delta_c\in\{-1,1\}$, and we denote this  Jordan block simply  by
  $$
 (\rho,c,\delta_c)
  $$
  In the case of elementary A-parameters, we take
  $
  \delta_1=1
  $.
   Observe that 
   if $\delta_c=1$ (resp. $\delta_c=-1$), the corresponding Speh representation is  
   square integrable
(resp. the Aubert dual of a square integrable representation).

 \begin{definition}  We say that an A-parameter $\psi$ is tempered (resp. cotempered) if $b=1$ (resp. $a=1$) for each $\rab\in \jp$.
 \end{definition}

\subsection{Discrete  A-parameters}
Denote by $\Phi(S_n)$ the set of equivalence classes of admissible homomorphisms $W_F\times\slc\rightarrow ^L \hskip-6pt S_n^{\hskip1pt 0}$, and let $\Phi=\cup_{i\geq0} \Phi(S_n)$. Let $\Phi_2$ be the subset corresponding to the square integrable $L$-packets. For $\phi\in \Phi$, one defines $\Jord(\phi)$ and $\Jord_\rho(\phi)$ analogously as in the case of A-packets.

Denote by $\D: \slc \rightarrow \slc\times\slc $ the diagonal map. Let $\psi$ be an A-parameter. Then  the composition $\psi\circ \D$ is given by  $(w,g) \mapsto \psi(w,g,g)$, and this element of $\Phi$  is denoted by
$
\psi_d.
$
Then
\begin{equation}
\label{decom-diag}
\psi_d=\underset{\rab\in \jp}{\boldsymbol\oplus}\ \ \underset{j\in [B,A]}{\boldsymbol\oplus} \ \ \rho\otimes E_{2j+1},
\end{equation}
where $B$ and $A$ are defined in \ref{another-par} (i.e. $B=\frac{|a-b|}2$ and $A=\frac{a+b}2-1$).
One says that an A-parameter $\psi$ is discrete (or that has discrete diagonal restriction) if
 $
 \psi_{\text d}\in\Phi_2.
 $
It is equivalent to the fact that $\psi$ has good parity and that $\psi_d$ is a multiplicity one representation (in particular, then $\psi$ is a multiplicity one representation). 
The subset of all $\psi\in \Psi$ which are discrete  is denoted by
$$
\Pddr.
$$
By \ref{decom-diag}, an A-parameter $\psi\in \Psi$ is in $\Pddr$ 
 if and only if $\psi$ has good parity  and for each fixed $\rho$, all the segments $[B,A]$ when $(\rho,A,B,\zeta)$ runs over $\Jord(\psi)$, are disjoint.

\subsection{Characters of the component group - good parity case} Let $\psi\in\Pgp$. Then the characters of the component group can be identified with the functions $\e$ on the multiset $\Jord(\psi)$ into $\{\pm1\}$ which satisfy 
$\prod_{\rab\in\jp}\e\rab=1$
and
$$
\e\rab=\e(\rho',a',b') \text{ whenever } \rab=(\rho',a',b').
$$
Sometimes we will look at characters of the component group of $\psi$ as functions on irreducible constituents of $\psi$ in a natural way.

\begin{definition}
\label{deforming}
Let $\psi\in\Pgp$, let $\e$ be a character of the component group of $\psi$, $\rab\in\jrp$ and let $\rabp$ be a Jordan block  such that $\rabp\not\in\jrp$, $a\equiv a' (\text{mod }2\Z)$ and  $b\equiv b' (\text{mod }2\Z)$. Denote by $\psi'$ the A-parameter obtained from $\psi$ by replacing $\rab$ by $\rabp$ in $\jrp$ (then $\psi'\in\Pgp$). Denote by $\e'$ the character of the component group of $\psi'$ such that  $\e'\rabp=\e\rab$, and that $\e'$ and $\e$ coincide on the remaining elements. We say that $(\psi',\e')$ is obtained from $(\psi,\e)$ deforming $\rab$ to $\rabp$ (or deforming $E_{a,b}^\rho$ to $E_{a',b'}^\rho$).
\end{definition}

\subsection{A-packets}
\label{AP}

To each A-parameter $\psi$ of $S_n$, J. Arthur has attached in \cite[Theorem 2.2.1]{MR3135650}  a finite multiset $\Pi_\psi$ of irreducible unitarizable representations of $S_n$, called the  A-packet of $\psi$, 
 such that endoscopic distribution properties are satisfied.  We will not recall these properties, but we will follow C. M\oe glin explicit representation-theoretic construction of 
A-packets (\cite{MR2209850}, \cite{MR2533005} and \cite{MR2767522}). Unlike $L$-packets,  A-packets do not need to be disjoint for different conjugacy classes of $\psi$ (see Corollary 4.2 of \cite{MR2305609} for more information in that direction).
Another difference  from
$L$-packets is that  A-packets always consist  only  of   unitarizable representations.

More precisely, Arthur has attached to each character $\e$ of the component group of $\psi$ a multiset $\pi(\psi,\e)$ of irreducible representations.
Their sum is $\Pi_\psi$.
M\oe glin has proved that $\pi(\psi,\e)$ is multiplicity one (\cite{MR2767522}), and that for a fixed $\psi$, the  $\pi(\psi,\e)$'s are disjoint for different $\e$'s. Therefore, she has proved that 
 $\Pi_\psi$'s also have  multiplicity one. M\oe glin and Arthur definitions of $\pi(\psi,\e)$ are not the same, but they are simply related (see \cite{MR3679701}). In this paper we will  follow the M\oe glin definition of $\pi(\psi,\e)$.

M\oe glin has also proved that in the case of elementary discrete A-parameters, $\pi(\psi,\e)$ are always irreducible representations (\cite{MR2209850}).
Note that in the case of elementary discrete A-parameter $\psi$, the number of characters of the component group of $\psi$ is the same as the number of characters of the component group of $\psi_d$, and we can identify them in an obvious way.

\begin{remark}
For  $\psi_0\in\Psi$ and  $\psi_1=\rho\otimes E_a\otimes E_b$ denote $\psi=\psi_1\oplus\psi_o\oplus\tilde\psi_1$. Then there exists a canonical injection $\Pi_{\psi_0}\h\Pi_\psi$ and all irreducible constituents of $u_\rho(a,b)\rtimes\pi_0$, $\pi_0\in\Pi_{\psi_0}$, are contained in the image of this injection (\cite[section 5]{AtMiZel}; there is a more precise statement regarding $u_\rho(a,b)\rtimes\pi_0$).

\end{remark}

 \subsection{Notation \texorpdfstring{$b_{\rho,\psi,\e}$ and $a_{\rho,\psi,\e}$}{Lg}}
 \label{def b a}
  Fix $\psi\in\Pe\cap \Pgp$, a character $\e$ of the component group of $\psi$ and  selfcontragredient $\rho\in\mc C$.
Let $X$ be a subset of $\Jord(\psi)$  
of the form 
  $ X=\{(\rho,c_1,\delta_{c_1}),\dots ,(\rho,c_k,\delta_{c_k})\}$, and chose an enumeration such that
   $c_1<c_2<\dots<c_k$. We  say that $\e$ is cuspidal on $X$ if
   \begin{enumerate}
   \item $c_1\in\{1,2\}.$
   
   \item $c_{i+1}-c_i=2$ for $1\leq i\leq k-1$.
   
   \item $\e(\rho,c_{i+1},\delta_{c_{i+1}})=-\e(\rho,c_{i},\delta_{c_{i}})$ for $1\leq i\leq k-1$.
   
   \item $\e(\rho,2,\delta_2)=-1$ if $c_1=2$.
   
   \end{enumerate}
   Denote by 
$$
b_{\rho,\psi,\e}
$$
 the maximal positive integer (if it exists) such that  $\e$  is cuspidal on
 $
 \{(\rho,c,\delta_c)\in\Jord(\psi);c\leq b_{\rho,\psi,\e}\}.
 $
 If there is no integer as above, we take $b_{\rho,\psi,\e}=-1$ if elements of $\Jord_\rho(\psi_d)$ are odd, and $b_{\rho,\psi,\e}=0$ if elements of $\Jord_\rho(\psi_d)$ are even.
 Further, let 
 $$
 a_{\rho,\psi,\e}
 $$
 be
 the minimum of the set 
 $
\{c;(\rho,c,\delta_c)\in\Jord(\psi),  c> b_{\rho,\psi,\e}\}
 $
if the above set is non-empty. Otherwise, put $a_{\rho,\psi,\e}=\infty$. Note that $a_{\rho,\psi,\e}\geq3$ if $\jrpd\subseteq1+2\Z$. Since  $a_{\rho,\psi,\e}\geq b_{\rho,\psi,\e}+2$, we have the following definition:

\begin{definition}
 If $a_{\rho,\psi,\e}= b_{\rho,\psi,\e}+2$, then we say that we are in the boundary case.
\end{definition}

We  use  M\oe glin construction of A-packets in the paper, but we  do not recall it here. We  recall only  the following simple step which we  use most often:

\subsection{Simple reduction step: case of \texorpdfstring{$a_{\rho,\psi,\e}>b_{\rho,\psi,\e}+2$ or $b_{\rho,\psi,\e}=0$}{Lg}  {\rm (\cite[section 2.4, 1]{MR2209850}  or  \cite[Definition 6.3, (2)]{MR3679701})}} 
\label{simple rs} \
 We consider two possibilities.

If $a_{\rho,\psi,\e}>2$,  then the pair $(\psi',\e')$ is is obtained from $(\psi,\e)$ deforming       
$(\rho, a_{\rho,\psi,\e},\delta _{a_{_{\rho,\psi,\e}}})$ to $(\rho, a_{\rho,\psi,\e}-2,\delta _{a_{_{\rho,\psi,\e}}})$.
If $a_{\rho,\psi,\e}=2$,  then the pair $(\psi',\e')$ is defined by deleting $(\rho,a_{\rho,\psi,\e},\delta _{a_{_{\rho,\psi,\e}}})$ from $\jrp$, 
 and taking $\e'$ to be the restriction of $\e$.

\begin{definition}
If $a_{\rho,\psi,\e}>b_{\rho,\psi,\e}+2$ or $b_{\rho,\psi,\e}=0$, one defines
$$
\pi(\psi,\e)\hui \nu^{\delta _{a_{\rho,\psi,\e}}\frac{a_{\rho,\psi,\e}-1}2}\rho\rtimes \pi(\psi',\e')
$$
to be  the unique irreducible   subrepresentation of the right-hand side.
\end{definition}

\subsection{Irreducible square integrable representations}
These representations of $S_n$ decompose into a disjoint union
$
\bigsqcup \ \Pi_\Psi
$
when $\psi$ runs over all tempered discrete A-parameters of $S_n$.
For any character $\psi$ of the component group, $\pi(\psi,\e)$ is an irreducible representation.
In this situation one usually works with the Weil-Deligne group (and drops the $b$'s which are always one in this case). Then we are in the case of local Langlands correspondence for square integrable representations.

The classification  (modulo cuspidal data) of irreducible square integral representations of the groups $S_n$ is completed in  \cite{MR1896238}. To an  irreducible square integral representation  one attaches an admissible triple  $(\Jord(\pi),\e_\pi,\pi_{cusp})$ consisting of Jordan blocks, a partially defined function and a partial cuspidal support of $\pi$. Such triples classify irreducible square integrable  representations (see \cite{MR1896238} for details). Then $\Jord(\pi)=\jp$ if and only if $\pi\in\Pi_\psi$ (\cite[Theorem 1.3.1]{MR2767522} or Theorem 10.1 of \cite{MR3713922}). Further, $\e_\pi$ is the restriction of the character of the component group of $\psi$ which is attached to $\pi$ by Arthur (Theorem 10.1 of \cite{MR3713922}, see also Proposition 8.1 there ).

\subsection{A consequence of the involution}
\label{con of inv}
 Let $(\psi,\e)$ be a pair consisting of $\psi\in\Psi $ and   a character $\e$ of the component group of $\psi$. One defines a pair  
$$
(\psi\tr,\e\tr)
$$
 where $\psi\tr\in\Psi $ and  $\e\tr$ is a character  of the component group of $\psi\tr$, by the requirement that $\Jord(\psi\tr)$
consists of all $(\rho,b,a)$, $(\rho,a,b)$ in $\Jord(\psi)$, and $\e\tr$ is defined using natural bijection between $\Jord(\psi\tr)$ and $\Jord(\psi)$.

Let  $\psi\in\Pe\cap \Pddr$.
 Obviously $\psi_d=(\psi\tr)_d$. Using this, we  identify characters of component groups of $\psi$ and $\psi_d$. Therefore, if $\psi',\psi''\in \Pe\cap\Pddr$ such that $(\psi')_d=(\psi'')_d$, their characters of component groups can be identified in a natural way. 

C. M\oe glin 
defined in \cite{MR2209850}  involutions on irreducible representations, which generalise the Aubert involution, and showed that each element   $\ppe$ of an elementary discrete A-packet  can be obtained from square integrable representation corresponding to $\e$ in the $L$-packet of $\psi_d$ by applying the involution (see \cite[Theorem 5]{MR2209850}). 
A consequence of it for classical Aubert  involution  is that
\begin{equation}
\label{inv-formula-el}
\ppe\tr=\pi(\psi\tr,\e)
\quad \text{ for }\quad \psi\in\Pe\cap\Pddr
\end{equation}
 (\cite[Theorem 5]{MR2209850}, Theorem 6.10 of \cite{MR3679701}).

\subsection{Cuspidal representations in elementary discrete A-packets}
 Let $\psi\in\Pe\cap\Pddr$.  From our observations in \ref{AP} (or in \ref{con of inv}) it
follows that the  cardinality of the $L$-packet of $\psi_d$ is equal to the cardinality of the A-packet of $\psi$.

Partial Aubert involutions (defined in  section 4 of \cite{MR2209850}) carry irreducible non-cuspidal representations to non-cuspidal ones, and they cannot carry non-cuspidal to  cuspidal ones. This implies that for
an irreducible cuspidal representation $\sigma$ of a classical group, $\sigma$ belongs to the $L$-packet of $\psi_d$ if and only if it belongs to the A-packet of $\psi\in \Pi_\psi$. Moreover, they determine the same character of the component groups (after we identify them).

\subsection{Orders  on Jordan blocks} Let $\psi\in\Psi$. Any total order $>_\psi$ on $\jrp$ which satisfies
\begin{equation}
\label{P}
\tag{$\mathcal P$}
a+b>a'+b', \quad |a-b|>|a'-b'|,\quad \zeta_{a,b}=\zeta_{a',b'}\quad \implies \rab>_\psi \rabp
\end{equation}
 for any $\rab,\rabp\in\jrp$ 
will be called an {\bf admissible order}. 

We will always fix some total order $>'$ on the set $\{\rho;\jrp\ne\emptyset\}$, and assume for each $\rab,(\rho',a',b')\in\jrp$ that if $\rho>'\rho'$, then $\rab>_\psi(\rho',a',b')$ (we do not need to assume this, but it simplifies descriptions of admissible orders).
Therefore, for describing  an admissible order on $\jrp$, it is enough
to describe it on each $\jrp$.

Suppose $\psi\in\Pddr$. Then in \eqref{P} we have $a+b>a'+b' \iff |a-b|>|a'-b'|$ (and the condition $|a-b|>|a'-b'|$ is redundant in \eqref{P} in this case). Actually, in this case we can find an admissible order $>_\psi$ satisfying
$$
\rab>_\psi\rabp \iff a+b>a'+b'.
$$
Such an order will be called {\bf natural}.

One says that $\psi_{>\hskip-4pt>}\in \Pddr$ with a natural order $>_{\psi_{>\hskip-4pt>}}$  {\bf dominates $\psi\in\Pgp$ with respect to an admissible order $>_\psi$} on $\jrp$ if $\{\rho;\jr(\psi_{>\hskip-4pt>})\ne\emptyset\}=\{\rho;\jrp\ne\emptyset\}$, and if for each $\rho$ from the last set we have an order preserving bijection
$
(a_{>\hskip-4pt>},b_{>\hskip-4pt>})\mapsto (a,b)
$
 from $\jr(\psi_{>\hskip-4pt>})$ onto $\jrp$ 
 which
satisfies
$$
A_{>\hskip-4pt>}-A=B_{>\hskip-4pt>}-B\geq0 \quad \text{ and } \quad \zeta_{a,b}=\zeta_{a_{>\hskip-4pt>},b_{>\hskip-4pt>}}.
$$
Define the function $\boldsymbol T: \jp\rightarrow \Z_{\geq0}$ by
$
\boldsymbol T\,\rab = A_{>\hskip-4pt>}-A= B_{>\hskip-4pt>}-B.
$
Observe that 
$$ 
(\rho, a_{>\hskip-4pt>},b_{>\hskip-4pt>})= \big(\,\rho, \  \ a+(1+\zeta_{(\rho,a,b)})\,\boldsymbol T\rab,\  \ b+(1-\zeta_{(\rho,a,b)})\,\boldsymbol T\rab\,\big)
$$
(the bigger of the numbers $a$ and $b$ is increased  by $2\,\boldsymbol T\rab$, and  the smaller one is unchanged; in the case $a=b$, we increase the first $a$ or the second $a$ by $2\,\boldsymbol T(\rho,a,a)$ and leave the other one unchanged, depending on whether  we took $\zeta_{a,a}$ to be $1$ or $-1$).

\subsubsection{Orders on elementary packets} 
Let $\psi\in\Pe$. 
Any total order $>$ satisfying the condition
$
a+b>a'+b'\implies (a,b)>(a',b')
$
 for any $\rho$ and any $(a,b),(a',b')\in\jrp$ will be called standard. 
Any standard order is obviously admissible.
Let $\psi\in\Pe\cap \Pddr$. Since we have fixed a total order on $\{\rho;\jrp\ne\emptyset\}$, there is only one natural order on $\jrp$ (the standard one).

Let $\psi,\psi'\in \Pe$ 
and assume $\psi'\in\Pddr$. Suppose that we have a bijection  $\varphi:\Jord(\psi')\rightarrow \Jord(\psi)$ which for any $\rho$ induces a bijection $\rabp\mapsto \rab$ from $\Jord_\rho(\psi')$ onto  $\Jord_\rho(\psi)$
 which satisfies
$$
\max(k',l')>\max(k,l)\implies \max\varphi(k',l')\geq\max\varphi(k,l).
$$
Then any such a bijection will be called {\bf standard}.

\subsection{Cuspidal representations and reducibility exponent}
\label{rho and sigma}
Fix  an irreducible cuspidal selfcontragredient  representation $\rho$ of a general linear group and  an irreducible cuspidal  representation $\sigma$ of a classical  group. The representation $\nu^{\alpha_{\rho,\sigma}}\rho\rtimes\sigma $ reduces for a unique $\alpha_{\rho,\sigma}\geq 0$ (this defines $\alpha_{\rho,\sigma}$). Further, $\alpha_{\rho,\sigma}\in\frac12\Z_{\geq0}$, and we denote  $\alpha_{\rho,\sigma}$ simply by
$$
\alpha.
$$
Denote by 
$
(\phi_\sigma,\e_\sigma)
$
a pair of an admissible homomorphism of the Weil-Deligne group and a character of the component group of $\phi_\sigma$, corresponding to $\sigma$ under the local Langlands correspondence. Let 
$$
\psi_\sigma:=\phi_\sigma\otimes E_1,
$$
 and lift $\e_\sigma$ to a character of the component group of $\psi$ in the natural way, and denote it again  by $\e_\sigma$. Then 
$
a_{\rho',\psi_\sigma,\e_\sigma}=\infty
$
for all selfcontragredient $\rho'\in\mc C$. Further:

\begin{enumerate}
\item Suppose $\alpha\geq1$. This is equivalent to
  $\Jord_\rho(\phi)\ne\emptyset$. Then
$
\alpha=\tfrac{\max(\Jord_\rho((\psi_\sigma)_d))+1}2.
$

\item
Suppose $\alpha<1$. Then $\alpha=0$ (resp. $\alpha=\tfrac12$) if and only if $\nu^{\frac12}\rho\rtimes1_{S_0}$
 is irreducible (resp. reducible), where $1_G$ denotes the trivial (one-dimensional) representation of a group $G$.
\end{enumerate}

\section{Case of reducibility \texorpdfstring{$>1$}{}}
\label{sec: >1}

In this section we assume that $\rho$, $\sigma $ and $\alpha$ are as in \ref{rho and sigma}, and we assume   that  
$
\alpha>1.
$

\subsection{Involution}

The proof of the following proposition and other claims in this paper that compute the Aubert involution, is based on the basic idea of \cite{MR3868005} (another possibility is to apply  \cite{AtMiZel}).

\begin{proposition}
\label{tr >1 inv}
 Let  $\alpha\geq \frac32$ and $m,n\in \Z_{\geq0}$. Denote
$$
\pi_{m,n}:=L([\alpha-1,\alpha+m]\tr;\delta([\alpha,\alpha+n];\sigma)).
$$
 Then
 \begin{equation}
 \label{prop-trans}
\pi_{n,m}\tr
=
\pi_{m,n}.
\end{equation}
\end{proposition}

\begin{proof}  We prove formula \eqref{prop-trans} by induction with respect to
\begin{equation}
\label{ind}
\min(m,n).
\end{equation}
We start with the basis of the induction, the case  $\min(m,n)=0$. To prove the formula in this case it is enough to prove that
$$
\pi_{0,n}\tr=\pi_{n,0}
$$
for $n\geq 0$. We prove it by a new induction.
For $n=0$, $\pi_{0,0}\tr=\pi_{0,0}$
by  \cite[Proposition 4.6, (2)]{MR-T-CR3}.
Suppose that the claim holds for some $n\geq0$.
Observe that
\begin{multline}
\label{emb-p-p-0}
\pi_{0,n+1}\h [-\alpha]\times [-(\alpha-1)]\rtimes \delta([\alpha,\alpha+n+1];\sigma)
\\
\h
 [-\alpha]\times [-(\alpha-1)]\times [\alpha+n+1]\rtimes  \delta([\alpha,\alpha+n];\sigma)
 \\
 \cong
  [\alpha+n+1]\times [-\alpha]\times [-(\alpha-1)]\rtimes  \delta([\alpha,\alpha+n];\sigma).
\end{multline}
From $\pi_{0,n}
  \h
   [-\alpha]\times [-(\alpha-1)]\rtimes  \delta([\alpha,\alpha+n];\sigma)$ we get
 \begin{equation}
 \label{emb-p-p-1}
  [\alpha+n+1]\rtimes 
  \pi_{0,n}
  \h
   [\alpha+n+1]\times [-\alpha]\times [-(\alpha-1)]\rtimes  \delta([\alpha,\alpha+n];\sigma).
   \end{equation}
    One checks directly  that the right hand side has a unique irreducible subrepresentation. This together with \eqref{emb-p-p-0} and \eqref{emb-p-p-1}  implies that
   $
   \pi_{0,n+1}\h 
   [\alpha+n+1]\rtimes 
   \pi_{0,n}.
   $
   Using this, one easily  gets
   $
   \mu^*_{\{[\alpha+n+1]\}}(\pi_{0,n+1}) =
   [\alpha+n+1]\otimes 
   \pi_{0,n}
   $
   (see Definition \ref{Janzen def} for notation).
   Now
\begin{equation}
 \label{emb-p-p-2}   \pi_{0,n+1}\tr
   \h
       [-(\alpha+n+1)]\rtimes 
   \pi_{0,n}\tr= [-(\alpha+n+1)]\rtimes 
   \pi_{n,0},
\end{equation}   and $\pi_{0,n+1}\tr$ is the unique irreducible subrepresentation of the right-hand side 
   (use \eqref{for comp inv} of Remark \ref{rem jantzen}, and the inductive assumption). Since $\pi_{n+1,0}$ is a unique irreducible subrepresentation of the right-hand side of \eqref{emb-p-p-2}, we get
  that
   $
   \pi_{0,n+1}\tr
  =
   \pi_{n+1,0}.
   $

   Now 
we  prove the inductive step (for induction with respect to \eqref{ind}).
Fix $k\geq0$, and suppose that formula \eqref{prop-trans} holds for all pairs $(m,n)$ such that $\min(m,n)=k$. Now take  a pair $(m',n')$ such that $\min(m',n')=k+1$. It is enough to prove formula \eqref{prop-trans} in the case of $n'\leq m'$. Denote $m'=m$ and $n'=n+1$. Then we need to prove 
formula \eqref{prop-trans} for the pair $(m,n+1)$, where $n<m$ and the inductive assumption implies that 
formula \eqref{prop-trans} holds for  the pair $(m,n)$.

 Observe that
\begin{multline}
\pi_{m,n+1}\h L( [-(\alpha+m),-(\alpha-1)]\tr)\rtimes \delta([\alpha,\alpha+n+1];\sigma)
\\
\h
 L( [-(\alpha+m),-(\alpha-1)]\tr)\times [\alpha+n+1]\rtimes  \delta([\alpha,\alpha+n];\sigma)
 \\
 \cong
  [\alpha+n+1]\times L( [-(\alpha+m),-(\alpha-1)]\tr)\rtimes  \delta([\alpha,\alpha+n];\sigma).
\end{multline}
Further, $\pi_{m,n}
  \h
    L( [-(\alpha+m),-(\alpha-1)]\tr)\rtimes  \delta([\alpha,\alpha+n];\sigma)$ implies  
 \begin{equation}
 \label{aux emb}
  [\alpha+n+1]\rtimes 
  \pi_{m,n}
  \h
   [\alpha+n+1]\times L( [-(\alpha+m),-(\alpha-1)]\tr)\rtimes  \delta([\alpha,\alpha+n];\sigma).
   \end{equation}
   Denote the representation on the right-hand side by $\Pi$.
   We will show that $\Pi$ has a unique irreducible subrepresentation by showing that 
   $$
   \gamma:= [\alpha+n+1]\times L( [-(\alpha+m),-(\alpha-1)]\tr)\otimes  \delta([\alpha,\alpha+n];\sigma)
   $$
    has multiplicity one in $\mu^*(\Pi)$.
   This will imply $
   \pi_{m,n+1}\h 
   [\alpha+n+1]\rtimes 
   \pi_{m,n}.
   $
   
    Recall
   \begin{equation}
   \label{mu-star-proof}
   \mu^*(\Pi)
   =
   M^*([\alpha+n+1])\times  M^*(L( [-(\alpha+m),-(\alpha-1)]\tr))\rtimes  \mu^*(\delta([\alpha,\alpha+n];\sigma)).
   \end{equation}
   Suppose that $\tau_1\otimes\tau_2$ is an irreducible subquotient of $\mu^*(\Pi)$ such that
   $$
   \supp(\tau_1)=\supp([\alpha+n+1]\times L( [-(\alpha+m),-(\alpha-1)]\tr))
   $$
   (obviously, $\gamma$ satisfies the assumption of $\tau_1\otimes\tau_2$).
   Now we will analyze when we can get $\tau_1\otimes\tau_2$ from the right hand side of \eqref{mu-star-proof}. Suppose that $\tau_1\otimes\tau_2\leq \gamma_1\times\gamma_2\rtimes\gamma_3$, 
   where  $\gamma_1  , \gamma_2 $ and $\gamma_3$ are terms from the sums of       
   $$
   M^*([\alpha+n+1]),  M^*(L( [-(\alpha+m),-(\alpha-1)]\tr))\text{ \  and  \ }  \mu^*(\delta([\alpha,\alpha+n];\sigma))
   $$
    respectively.    
  Considering the support of $\tau_1$, formula \eqref{Steinberg-mu-star} implies that $\gamma_3$ must be $1\otimes \delta([\alpha,\alpha+n];\sigma)$.
   
   Further, if $\gamma_1=[\alpha+n+1]\otimes1$, then considering the support of $\tau_1$, formula \eqref{M*segment-Z} (actually \eqref{M*segment-Z-GL} is enough) implies that $\gamma_2$ must be $ L( [-(\alpha+m),-(\alpha-1)]\tr) \otimes1$, and then $\tau_1\otimes\tau_2=\gamma_1\times\gamma_2\rtimes\gamma_3=[\alpha+n+1]\times L( [-(\alpha+m),-(\alpha-1)]\tr)\otimes  \delta([\alpha,\alpha+n];\sigma)=\gamma$. 
   
   Suppose $\gamma_1\ne [\alpha+n+1]\otimes1$ (recall $M^*([\alpha+n+1])=1\otimes [\alpha+n+1]+[\alpha+n+1]\otimes1+[-(\alpha+n+1)]\otimes1$). Write $\gamma_2=\gamma_2'\otimes \gamma_2''$. Then $[\alpha+n+1]$ in the support of $\tau_1$ must come from $\gamma_2'$, i.e. it must be in $\supp(\gamma_2')$. Now formula \eqref{M*segment-Z} implies that in this case  $[\alpha-1]$ will also be  in $\supp(\gamma_2')$, and therefore also in the support of $\tau_1$, which  contradicts our assumption on the support of $\tau_1$. Therefore, we have proved multiplicity one of $\gamma$ in $\mu^*(\Pi)$ which implies $
   \pi_{m,n+1}\h 
   [\alpha+n+1]\rtimes 
   \pi_{m,n}.
   $

The last relation and Frobenius reciprocity imply 
\begin{equation}
\label{proof-ineq}
[\alpha+n+1]\otimes 
   \pi_{m,n} \leq
   \mu^*(\pi_{m,n+1}) .
   \end{equation}
      Observe that $\pi_{m,n}\leq L([\alpha-1,\alpha+m]\tr)\rtimes\delta([\alpha,\alpha+n];\sigma))
   $ and \eqref{Steinberg-mu-star}  imply 
   $$
   s_{GL}( \pi_{m,n})\leq 
  M^*_{GL}( L([\alpha-1,\alpha+m]\tr))\times\delta([\alpha,\alpha+n])\otimes \sigma,
  $$
  which further implies that $\mu^*(\pi_{m,n})$ does not have an irreducible subquotient of the form $[\alpha+n+1]\otimes -$ (use \eqref{M*segment-Z-GL} and \eqref{Steinberg-mu-star}). Now this, \eqref{proof-ineq} and Lemma \ref{lemma-Jantzen} imply
$$
   \mu^*_{\{[\alpha+n+1]\}}(\pi_{m,n+1}) =
   [\alpha+n+1]\otimes 
   \pi_{m,n}.
   $$
  Further, \eqref{for comp inv} of Remark \ref{rem jantzen} and the inductive assumption imply
    $\pi_{m,n+1}\tr \h
   [-(\alpha+n+1)]\rtimes 
   \pi_{n,m}$, which implies $\pi_{m,n+1}\tr=\pi_{n+1,m}$ since $\pi_{n+1,m}$ embeds into $
   [-(\alpha+n+1)]\rtimes 
   \pi_{n,m}$ as the unique irreducible subrepresentation. This completes the proof of the proposition
\end{proof}

\subsection{Definition of $(\psi,\e)_{k,l}$ in the case $\alpha>1$}
\label{notation>1}
The assumption $\alpha>1$ and \ref{rho and sigma} imply that 
$
\psi_\sigma=\psi_{-}\oplus E_{ 2\alpha-3,1}^\rho\oplus E_{ 2\alpha-1,1}^\rho
$
for some $\psi_-\in\Pe\cap\Pddr$ (this defines $\psi_-$). 
Denote in the sequel
$$
\psi_{k,l}:= \psi_-\oplus E_{ k,1}^\rho\oplus E_{1, l}^\rho,
$$
where $k,l\geq 0$ will be always chosen to be of the same parity as $2\alpha-1$ (and therefore $\psi_{k,l}\in\Pgp$, which implies $\psi_{k,l}\in\Pe$). In the sequel, we will always chose $k$ and $l$ such that $\psi_{k,l}$ is a multiplicity one representation. Clearly, $(\psi_{2\alpha-1,2\alpha-3})_d=(\psi_\sigma)_d$.

Further,  denote by $\e_{k,l}$ the character of the component group of $\psi_{k,l}$ which extends  $\e_\sigma$ on  $\psi_-$, and which satisfies
\begin{gather*}
\e_{k,l}(E_{ k,1}^\rho)= \e_\sigma (E_{ 2\alpha-1,1}^\rho),
\\
\e_{k,l}(E_{ 1,l}^\rho)=\e_\sigma (E_{ 2\alpha-3,1}^\rho)\text{ if }\alpha>\tfrac32,\text{ and }
\e_{k,l}(E_{ 1,l}^\rho)=1\text{ if }\alpha=\tfrac32. 
\end{gather*}
We will work throughout  in this section with pairs $(\psi_{k,l},\e_{k,l})$. Therefore, to shorten notation in the sequel, we denote such a pair by 
$
(\psi,\e)_{k,l}.
$

\subsection{On corresponding A-packets}

\begin{theorem} 
\label{theorem >1}Let  $\alpha\geq \frac32$ and $m,n\in\Z_{\geq0}$. Using the notation for A-parameters  introduced  above,
 we have

\begin{enumerate} 
\item
\label{packet a > 3/2}
 $L([\alpha-1,\alpha+m]\tr;\delta([\alpha,\alpha+n];\sigma))\in \Pi_{\psi_{2\alpha+1+2n,2\alpha+1+2m}}$.
In particular,  $L([\alpha-1,\alpha+m]\tr;\delta([\alpha,\alpha+n];\sigma))$ is unitarizable.
\item
\label{ppe 3/2 formula} For $m\ne n$ we have
\begin{equation}
\label{formula>1}
\pi((\psi,\e)_{2\alpha+1+2n,2\alpha+1+2m})
=L([\alpha-1,\alpha+m]\tr;\delta([\alpha,\alpha+n];\sigma)).  
\end{equation}
\end{enumerate}

\end{theorem}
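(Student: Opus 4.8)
The plan is to prove both parts by induction, exploiting the fact that Mœglin's construction lets one peel off rank-lowering factors $\nu^x\rho$ one at a time, and that the representations $\pi_{m,n}$ of Proposition \ref{tr >1 inv} admit exactly parallel inductive descriptions. First I would set up the base case: when $m=n=0$, the representation is $L([\alpha-1,\alpha]\tr;\delta([\alpha];\sigma))$, which is (a case of) the Mœglin representation \eqref{intro-Moe-r}, and its membership in $\Pi_{\psi_{2\alpha+1,2\alpha+1}}$ — together with the explicit identification $\pi((\psi,\e)_{2\alpha+1,2\alpha+1})$ when one refines the character — should be read off from Mœglin's Appendix A in \cite{MR-T-CR3}, or reproved directly here using the simple reduction step \ref{simple rs}. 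I would also note that the ``in particular'' (unitarizability) claim in \eqref{packet a > 3/2} is automatic once membership in some $\Pi_\psi$ is established, since A-packets consist of unitarizable representations.

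Next I would run the induction in the $n$-variable (the $\delta([\alpha,\alpha+n];\sigma)$-depth). The key computational input is the relation $\pi_{m,n+1}\hookrightarrow [\alpha+n+1]\rtimes\pi_{m,n}$ as the unique irreducible subrepresentation, which was already extracted inside the proof of Proposition \ref{tr >1 inv} via the multiplicity-one analysis of $\gamma$ in the Jacquet module. On the packet side, the parameter $\psi_{k,l}$ with the relevant indices is obtained from $\psi_{k,l-2}$ (or $\psi_{k-2,l}$) by one application of the ``simple reduction step'' of \ref{simple rs}: the integer $a_{\rho,\psi,\e}$ is exactly $2\alpha+2+2n$ in the $n$-direction (and $2\alpha+2+2m$ in the $m$-direction), and deforming $(\rho,a_{\rho,\psi,\e},\delta)$ down by $2$ produces $\nu^{\pm(a_{\rho,\psi,\e}-1)/2}\rho\rtimes\pi(\psi',\e')$ with $\pi(\psi',\e')$ the inductively known packet member. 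Matching the exponent $\delta_{a_{\rho,\psi,\e}}\frac{a_{\rho,\psi,\e}-1}{2}$ against $\alpha+n+1$ (resp. $\alpha+m$), and using that the unique irreducible subrepresentation of the induced representation is characterized by its Jacquet module, I would conclude that the Mœglin member coincides with $\pi_{m,n}$, which gives \eqref{packet a > 3/2}, and for $m\ne n$ the stronger statement \eqref{ppe 3/2 formula} that it is the member attached to the specific character $\e_{k,l}$ (the constraint $m\ne n$ ensuring we are genuinely outside the boundary/ambiguous case where the reduction step applies cleanly and the exponent sign $\delta$ is unambiguous).

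To make the induction close, I would need the compatibility of the two peeling operations — representation-theoretic ``$\nu^x\rho\rtimes(-)$, take unique irreducible subrep'' versus the parameter-level deformation — which is precisely what \ref{simple rs} and its Definition provide, so this is bookkeeping rather than new work; the Aubert-duality symmetry \eqref{inv-formula-el} together with Proposition \ref{tr >1 inv} can be used to transfer results proved in the $n$-direction to the $m$-direction, halving the casework. The main obstacle I anticipate is the $n+1=m$ (equivalently, near-diagonal) situation: there the cuspidal-support / multiplicity-one argument for $\gamma$ is more delicate, and one must check that $a_{\rho,\psi,\e}$ and $b_{\rho,\psi,\e}$ really are in the regime $a_{\rho,\psi,\e}>b_{\rho,\psi,\e}+2$ (or $b_{\rho,\psi,\e}=0$) so that \ref{simple rs} is applicable — if the indices force a boundary case, the simple reduction step fails and one would have to invoke the more elaborate part of Mœglin's construction. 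Verifying that the chosen parities of $k,l$ (same as $2\alpha-1$) keep us out of the boundary case, and that the character $\e_{k,l}$ as defined is indeed the one produced by iterating the reduction step, is where I would concentrate the care; once that is pinned down, both \eqref{packet a > 3/2} and \eqref{ppe 3/2 formula} follow formally.
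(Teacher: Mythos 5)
Your proposal assembles the right toolkit --- the simple reduction step of \ref{simple rs}, the embedding $\pi_{m,n+1}\h[\alpha+n+1]\rtimes\pi_{m,n}$ with its multiplicity-one Jacquet-module verification, and Aubert duality \eqref{inv-formula-el} to halve the casework --- and to that extent it runs parallel to the paper. But there are two concrete gaps. First, your base case $m=n=0$ is the worst possible starting point: for $k=l$ the parameter $\psi_{k,k}=\psi_-\oplus E_{k,1}^\rho\oplus E_{1,k}^\rho$ has diagonal restriction containing $\rho\otimes E_k$ with multiplicity two, so $\psi_{k,k}\notin\Pddr$, the bookkeeping via $a_{\rho,\psi,\e}$ and $b_{\rho,\psi,\e}$ (which presupposes a multiplicity-one elementary parameter) is unavailable, and the simple reduction step cannot be iterated from there. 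The paper instead starts the induction at the cuspidal $\sigma$ itself (formally $m=-2$, $n=-1$), first builds the generalized Steinberg column $\delta([\alpha,\alpha+n];\sigma)$, and only then increases $m$; the entire diagonal $m=n$ is treated \emph{last}, by taking the already-established off-diagonal member of $\Pi_{\psi_{2\alpha+3+2m,2\alpha+1+2m}}$ and applying $\Jac_{\alpha+m+1}$ via the dominating-parameter mechanism of \cite[3.1.2]{MR2767522}. That descent step is what actually proves part (1) at $m=n$, and it is absent from your plan: you flag the near-diagonal situation as delicate but attribute the difficulty to the boundary case $a_{\rho,\psi,\e}=b_{\rho,\psi,\e}+2$, whereas the real obstruction is non-discreteness of the diagonal restriction, a different part of M\oe glin's construction.

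Second, the reduction step always peels the \emph{minimal} non-cuspidal block, so you cannot freely choose to induct ``in the $n$-direction'' or ``in the $m$-direction'': for the character $\e_{k,l}$ of \ref{notation>1} and $m<n$ one has $a_{\rho,\psi,\e}=2\alpha+1+2m$ (not $2\alpha+2+2m$; the blocks have the parity of $2\alpha-1$) with $\delta=-1$, so the algorithm strips $[-(\alpha+m)]$ and lowers $m$, whereas stripping $[\alpha+n+1]$ corresponds to the regime $n<m$ and to the \emph{other} character $\e'$ of the paper's ``reverse setting.'' Your upward induction in $n$ with the fixed character $\e$ therefore does not match what the algorithm produces, and the identification of which character yields which representation --- the actual content of part (2) --- requires either the parallel reverse-setting induction or the transfer $\pi((\psi,\e)_{2\alpha+1+2m,2\alpha+1+2n})=\pi((\psi,\e)_{2\alpha+1+2n,2\alpha+1+2m})\tr$ carried out with care about how $\e$ and $\e'$ correspond under $\psi\mapsto\psi\tr$. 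Once these two points are repaired, the remainder of your plan is sound.
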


\begin{proof} 
The proof goes through several steps.
\subsubsection{Case of $m=-2$} 
\label{first step ref}
Using induction, we first prove the (well-known) simple fact that
$$
\pi((\psi,\e)_{2\alpha+1+2n,2\alpha-3}))=\delta([\alpha,\alpha+n];\sigma), \quad n\geq -1.
$$
 Observe first that
 $
 \sigma=
 \pi((\psi,\e)_{2\alpha-1,\,2\alpha-3}).
 $
 Therefore, we have a basis of induction. Suppose $n\geq 0$ and that the above formula holds for $n-1$.
 Consider now $(\psi,\e)_{2\alpha+1+2n,2\alpha-3}$. 
 Then
\begin{gather*} b_{\rho,(\psi,\e)_{2\alpha+1+2n,2\alpha-3}}=2\alpha-3,
\quad
 a_{\rho,(\psi,\e)_{2\alpha+1+2n,2\alpha-3}}=2\alpha+1+2n,
\\
 \delta_{a_{\rho,(\psi,\e)_{2\alpha+1+2n,2\alpha-3}}}=1. 
\end{gather*}
Now by \ref{simple rs}
we know that $\pi((\psi,\e)_{2\alpha+1+2n,2\alpha-3})$ is the  unique irreducible subrepresentation of
$$
[(\delta_{(\psi,\e)_{2\alpha+1+2n,2\alpha-3}})\tfrac{(2\alpha+1+2n)-1}2]\rtimes \pi((\psi,\e)_{2\alpha-1+2(n-1),2\alpha-3})
=[\alpha+n]\rtimes\delta([\alpha,\alpha+n-1];\sigma),
$$
which easily implies  
$
\pi((\psi,\e)_{2\alpha+1+2n,2\alpha-3})
=
\delta([\alpha,\alpha+n];\sigma),
$
and completes the proof of the inductive step.

\subsubsection{Proof of \eqref{formula>1} for $-2\leq m<n$} 
\label{-2 leq m<n}
 We have proved above that the claim holds for $m=-2$. We now fix some $m\geq-1$ (together with $n>m$), and assume that the formula \eqref{formula>1} holds for $m-1$. We will prove by induction that it holds for $m$.
 Now
\begin{gather*} b_{\rho,(\psi,\e)_{2\alpha+1+2n,2\alpha+1+2m}}=2\alpha-5,
\quad
 a_{\rho,(\psi,\e)_{2\alpha+1+2n,2\alpha+1+2m}}=2\alpha+1+2m,
\\
 \delta_{a_{\rho,(\psi,\e)_{2\alpha+1+2n,2\alpha+1+2m}}}=-1,
\end{gather*}
with the exception that for $\alpha=\frac32$ we take $b_{(\psi,\e)_{2\alpha+1+2n,2\alpha-1}}=0$.
By \ref{simple rs}
we know that $\pi((\psi,\e)_{\rho,(\psi,\e)_{2\alpha+1+2n,2\alpha+1+2m}})$ is the unique irreducible subrepresentation of
\begin{multline}
[(\delta_{\rho,(\psi,\e)_{2\alpha+1+2n,2\alpha+1+2m}})\tfrac{(2\alpha+1+2m)-1}2]\rtimes
 \pi((\psi,\e)_{(\psi,\e)_{2\alpha+1+2n,2\alpha-1+2m}})
\\
=[-(\alpha+m)]\rtimes L([\alpha-1,\alpha+m-1]\tr;\delta([\alpha,\alpha+n];\sigma)),
\end{multline}
which implies   formula \eqref{formula>1}, and completes the proof of the inductive step.

\subsubsection{Reverse setting, $n=0$}
\label{rev n=0}
 We  now repeat  the previous construction in the reversed setting.
Denote by $\e_{k,l}'$ the character of the component group of $\psi_{k,l}$ which extends  $\e_\sigma$ on  $\psi_-$, and which satisfies
\begin{gather*}
\e_{k,l}'(E_{ k,1}^\rho)= \e_\sigma (E_{ 2\alpha-3,1}^\rho)
\text{ if }\alpha>\tfrac32,\text{ and }
\e_{k,l}'(E_{ 1,l}^\rho)=1\text{ if }\alpha=\tfrac32,
\\
\e_{k,l}'(E_{ 1,l}^\rho)=\e_\sigma (E_{ 2\alpha-1,1}^\rho). 
\end{gather*}
We claim that
\begin{equation}
\label{m up only}
\pi((\psi,\e')_{2\alpha-3,2\alpha+1+2m})=L([\alpha,\alpha+m]\tr;\sigma), \quad m\geq-1.
\end{equation}
The proof goes by induction. Observe that  $\sigma=\pi((\psi,\e')_{2\alpha-3,2\alpha-1})$, which is the basis of the induction. Suppose $m\geq0$ and that the formula holds for $m-1$.
 Then
\begin{gather*} b_{\rho,(\psi,\e')_{2\alpha-3,2\alpha+1}}=2\alpha-3,
\quad
 a_{\rho,(\psi,\e')_{\rho,2\alpha-3,2\alpha+1+2m}}=2\alpha+1+2m,
\\
 \delta_{a_{\rho,(\psi,\e')_{2\alpha-3,2\alpha+1+2m}}}=-1. 
\end{gather*}
By \ref{simple rs} we know that $\pi((\psi,\e')_{2\alpha-3,2\alpha+1+2m})$ is the unique irreducible subrepresentation of
$$
[(\delta_{(\psi,\e')_{2\alpha-3,2\alpha+1+2m}})\tfrac{(2\alpha+1+2m)-1}2]\rtimes \pi((\psi,\e')_{2\alpha-3,2\alpha-1+2(m-1)})
=[-\alpha-m]\rtimes L([\alpha,\alpha+m-1]\tr;\sigma),
$$
which directly implies  formula \ref{m up only}. This completes the proof of the inductive step.

\subsubsection{Reverse setting, $-1\leq n<m$} 
\label{-1 leq n<m} 
With $\e'_{k,l}$ introduced in \ref{rev n=0},
 we now prove  the formula
\begin{equation}
\label{additional formula}
\pi((\psi,\e')_{2\alpha+1+2n,2\alpha+1+2m})=L([\alpha-1,\alpha+m]\tr;\delta([\alpha,\alpha+n];\sigma)), \ \ m>n\geq -1
\end{equation}
 by induction with respect to $n$. 
For the basis of induction for $n=-1$,  we need to consider  $(\psi,\e')_{2\alpha-1,2\alpha+1+2m}$.
Then
\begin{gather*} b_{\rho,(\psi,\e')_{2\alpha-1,2\alpha+1+2m}}=2\alpha-5,
\quad
 a_{\rho,(\psi,\e')_{2\alpha-1,2\alpha+1+2m}}=2\alpha-1,
\\
 \delta_{a_{\rho(\psi,\e')_{2\alpha-1,2\alpha+1+2m}}}=1,
\end{gather*}
with the exception that for $\alpha=\frac32$ we take $b_{(\psi,\e')_{2\alpha-1,2\alpha+1+2m}}=0$.
By \ref{simple rs} we know that $\pi((\psi,\e')_{2\alpha-1,2\alpha+1+2m})$ is the unique irreducible subrepresentation of
$$
[(\delta_{(\psi,\e')_{2\alpha-1,2\alpha+1+2m}})\tfrac{(2\alpha-1)-1}2]\rtimes \pi((\psi,\e')_{2\alpha-3,2\alpha+1+2m})
=[\alpha-1]\rtimes L([\alpha,\alpha+m]\tr;\sigma)
$$
(the above equality    follows from \ref{rev n=0}).
Therefore
\begin{multline*}
\pi((\psi,\e')_{2\alpha-1,2\alpha+1+2m})\h [\alpha-1]\times L([-(\alpha+m),-\alpha]\tr)\rtimes\sigma
\cong  L([-(\alpha+m),-\alpha]\tr)\times [\alpha-1]\rtimes\sigma
\\
\cong   L([-(\alpha+m),-\alpha]\tr)\times [-(\alpha-1)]\rtimes\sigma.
\end{multline*}
This obviously implies \eqref{additional formula} for $n=-1$.

We go now to  the inductive step. Suppose $n\geq 0$ and that  formula \eqref{additional formula} holds for $n-1$. Here
\begin{gather*} b_{\rho,(\psi,\e')_{2\alpha+1+2n,2\alpha+1+2m}}=2\alpha-5,
\quad
 a_{\rho,(\psi,\e')_{2\alpha+1+2n,2\alpha+1+2m}}=2\alpha+1+2n,
\\
 \delta_{a_{\rho,(\psi,\e')_{2\alpha+1+2n,2\alpha+1+2m}}}=1.
\end{gather*}
Then $\pi((\psi,\e')_{2\alpha+1+2n,2\alpha+1+2m})$ is the unique irreducible subrepresentation of
\begin{multline*}
[(\delta_{\rho,(\psi,\e')_{2\alpha+1+2n,2\alpha+1+2m}})\tfrac{(2\alpha+1+2n)-1}2]\rtimes \pi((\psi,\e')_{2\alpha+1+2(n-1),2\alpha+1+2m})
\\
=[\alpha+n]\rtimes L( [\alpha-1,\alpha+m]\tr ;\delta([\alpha,\alpha+n-1];\sigma)).
\end{multline*}
The last representation embeds into
\begin{multline}
\Gamma:=[\alpha+n]\times L( [-(\alpha+m),-(\alpha-1)]\tr)\rtimes\delta([\alpha,\alpha+n-1];\sigma)
\\
\cong  L( [-(\alpha+m),-(\alpha-1)]\tr)\times  [\alpha+n]\rtimes\delta([\alpha,\alpha+n-1];\sigma)
\end{multline}
We will show that the multiplicity of 
$$
\gamma:=[\alpha+n]\times L( [-(\alpha+m),-(\alpha-1)]\tr)\otimes\delta([\alpha,\alpha+n-1];\sigma)
$$
in $\mu^*(\Gamma)$ is one. To get $\gamma$ as a subquotient, from  formula
$$
\mu^*(\Gamma)=M^*( [\alpha+n])\times M^*(L( [-(\alpha+m),-(\alpha-1)]\tr))
\rtimes
\mu^*(\delta([\alpha,\alpha+n-1];\sigma)
)
$$
we see that on the last term on the right hand side we must take $1\otimes \delta([\alpha,\alpha+n-1];\sigma)$ (to see this, one can consider cuspidal supports). If we did not take $[\alpha+n]\otimes1$ from $M^*( [\alpha+n])$,  formula 
\eqref{M*segment-Z-GL} implies that we would have positive exponents different from $\alpha+n$ on the left hand side of $\otimes$. Therefore, we must take $[\alpha+n]\otimes1$, which further implies that from $M^*(L( [-(\alpha+m),-(\alpha-1)]\tr))$ we must take $L( [-(\alpha+m),-(\alpha-1)]\tr)\otimes1$. This implies  multiplicity one.

Therefore $\Gamma$ has a unique irreducible subrepresentation. Since $L( [-(\alpha+m),-(\alpha-1)]\tr)\rtimes\delta([\alpha,\alpha+n];\sigma)\h \Gamma$, 
we get that \eqref{additional formula} holds. This completes the proof of the inductive step.

\subsubsection{Case $m=n\geq0$} 
\label{n=m,>1}
Denote
$
\psi_{>\hskip-4pt>}:=\psi_{2\alpha+3+2m,2\alpha+1+2m}, \  \psi:=\psi_{2\alpha+1+2m,2\alpha+1+2m}.
$
We fix 
any standard order on $\Jord_\rho(\psi_{>\hskip-4pt>})$, and denote it by $>_{\psi_{>\hskip-4pt>}}$. 
Then this is a 
 natural order.
 Define a bijection $\Jord_\rho(\psi_{>\hskip-4pt>})\rightarrow \Jord_\rho(\psi)$ which carries
$$
\varphi:(\rho,2\alpha+3+2m,1)\mapsto (\rho,2\alpha+1+2m,1),
$$
and is equal to the identity on the remaining elements. 
Using the bijection $\varphi$, we define total order $>_{\psi}$ on $\Jord_\rho(\psi)$ (i.e. $\varphi(u)>_{\psi}\varphi(v)\iff u >_{\psi_{>\hskip-4pt>}}v $). This is an admissible order on $\Jord_\rho(\psi)$ and  $\varphi$ preserves the order (by definition of $>_{\psi}$).
In this way $\Jord(\psi_{>\hskip-4pt>})$ dominates $\Jord(\psi)$ and by   \cite[3.1.2]{MR2767522} or   \cite[section 8]{MR3679701}, we can get all the elements of $\Pi_{\psi}$ from the elements of $\Pi_{\psi_{>\hskip-4pt>}}$ applying 
$
\Jac_{\alpha+m+1}
$
(each application of the operator $\Jac_{\alpha+m+1}$ will result in either 
 an irreducible representation or $0$). 
Observe that
\begin{multline}
L([\alpha-1,\alpha+m]\tr;\delta([\alpha,\alpha+m+1];\sigma))
\h
\\
L([-(\alpha+m),-(\alpha-1)]\tr)\rtimes\delta([\alpha,\alpha+m+1];\sigma)
\\
\h L([-(\alpha+m),-(\alpha-1)]\tr)\times [\alpha+m+1]\rtimes\delta([\alpha,\alpha+m];\sigma)
\\
\cong
 [\alpha+m+1] \times
L([-(\alpha+m),-(\alpha-1)]\tr)\rtimes\delta([\alpha,\alpha+m];\sigma).
\end{multline}
Obviously
\begin{multline}
[\alpha+m+1] \rtimes
L([\alpha-1,\alpha+m]\tr;\delta([\alpha,\alpha+m];\sigma))
\\
\h
[\alpha+m+1] \times
L([-(\alpha+m),-(\alpha-1)]\tr)\rtimes\delta([\alpha,\alpha+m];\sigma).
\end{multline}
One  sees directly that the last representation has a unique irreducible subrepresentation (showing that the multiplicity of $[\alpha+m+1] \otimes
L([-(\alpha+m),-(\alpha-1)]\tr)\otimes\delta([\alpha,\alpha+m];\sigma)$ in the Jacquet module is one). This implies 
\begin{multline}
L([\alpha-1,\alpha+m]\tr;\delta([\alpha,\alpha+m+1];\sigma))
\\
\h
[\alpha+m+1] \rtimes
L([\alpha-1,\alpha+m]\tr;\delta([\alpha,\alpha+m];\sigma)).
\end{multline}
Now Frobenius reciprocity implies that 
$$
\Jac_{\alpha+m+1}(L([\alpha-1,\alpha+m]\tr;\delta([\alpha,\alpha+m+1];\sigma)))=
L([\alpha-1,\alpha+m]\tr;\delta([\alpha,\alpha+m];\sigma)),
$$
and therefore, $L([\alpha-1,\alpha+m]\tr;\delta([\alpha,\alpha+m];\sigma))$ is in 
the A-packet of $\psi_{2\alpha+1+2m,2\alpha+1+2m}$. 

\subsubsection{Case $0\leq m<n$}  Observe that if we take any $\psi'\in\Pe\cap \Pddr$ such that $(\psi')_d=(\psi_-)_d$ instead of $\psi_-$ in \ref{theorem >1}  
  and use $\psi'$ (instead of $\psi_-)$ to define $\psi_{k,l}$, $\e_{k,l}$ and $\psi_{k,l}'$, we get exactly the same results as we have obtained in the proof so far.

Assume below $0\leq m<n$. Now in \ref{-2 leq m<n} put $\psi'=(\psi_-)\tr$ and denote the objects that correspond to $\psi_{k,l}$ and  $\e_{k,l}$ for this $\psi'$ 
 by $\psi_{k,l}''$ and $\e_{k,l}''$ (recall $\psi_{2\alpha+1+2n,2\alpha+1+2m}''=L([\alpha-1,\alpha+m]\tr;\delta([\alpha,\alpha+n];\sigma))$).
 Then 
$$
(\psi_{2\alpha+1+2n,2\alpha+1+2m}'')\tr= \psi_{2\alpha+1+2m,2\alpha+1+2n},
$$
and $\e_{2\alpha+1+2n,2\alpha+1+2m}''$ give the same diagonal restriction as $\psi_{2\alpha+1+2m,2\alpha+1+2n}'$
(defined in \ref{-1 leq n<m}). Now \ref{inv-formula-el} and Proposition \ref{tr >1 inv} imply
\begin{multline}
\pi((\psi,\e)_{2\alpha+1+2m,2\alpha+1+2n})
=
\pi((\psi_{2\alpha+1+2n,2\alpha+1+2m}'')\tr, \e_{2\alpha+1+2n,2\alpha+1+2m}'')
\\
=
\pi((\psi,\e)_{2\alpha+1+2n,2\alpha+1+2m})\tr=L([\alpha-1,\alpha+m]\tr;\delta([\alpha,\alpha+n];\sigma))\tr 
\\
=
L([\alpha-1,\alpha+n]\tr;\delta([\alpha,\alpha+m];\sigma)).
\end{multline}
\end{proof}

Note that in the proof of the above theorem we have also proved what happens with  the few additional cases where $m\geq -2$ and $n\geq -1$. We comment these mostly well known  cases briefly  in  the following

\begin{corollary} 
\label{cor-ends}
\label{}
Let $m\geq -2$ and $n\geq -1$.
\begin{enumerate}

\item
\label{cor-str-st-tr}
 The representation
$\delta([\alpha,\alpha+n];\sigma)$
(resp. $L([\alpha,\alpha+n]\tr;\sigma)$)
is  in $\Pi_\psi$ for $\psi=\psi_-\oplus E^\rho_{2\alpha-3,1}\oplus E^\rho_{2\alpha+1+2n,1}$ (resp. $\psi=\psi_-\oplus E^\rho_{1,2\alpha-3}\oplus E^\rho_{1,2\alpha+1+2n}$).

\item
\label{strange-c-s} 
For   $m,n\geq -1$,
the representations
$L([\alpha-1];\delta([\alpha,\alpha+n];\sigma))$ and
 $L([\alpha-1,\alpha+m]\tr;\sigma)$ are in A-packets.
These representations are at the end of  complementary series if $n\geq0$ (resp $m\geq0$).

\item
\label{cor-st-tr}
$\delta([\alpha,\alpha+n];\sigma)\tr=L([\alpha,\alpha+n]\tr;\sigma)$.

\item
\label{cor-st-tr+}
$L([\alpha-1];\delta([\alpha,\alpha+n];\sigma))\tr
=
L([\alpha-1,\alpha+n]\tr;\sigma)$.
\end{enumerate}
\end{corollary}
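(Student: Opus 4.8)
The plan is to read off all four assertions from computations already carried out in the proof of Theorem \ref{theorem >1}, specialised to the degenerate values $m\in\{-2,-1\}$ and $n=-1$ — where $[\alpha-1,\alpha+m]$ collapses to the empty segment or to a one-point segment and $\delta([\alpha,\alpha+n];\sigma)$ collapses to $\sigma$ — supplemented by two standard facts: the classification of irreducible square integrable representations recalled above, in the form $\Jord(\pi)=\jp\iff\pi\in\Pi_\psi$, and the involution formula \eqref{inv-formula-el}, whose general analogue asserts that the Aubert involution maps $\Pi_\psi$ onto $\Pi_{\psi\tr}$.

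For \eqref{cor-str-st-tr}: the generalised Steinberg $\delta([\alpha,\alpha+n];\sigma)$ is square integrable, and its Jordan block (in the sense of \cite{MR1896238}) is obtained from $\Jord(\sigma)=\Jord(\psi_-)\cup\{(\rho,2\alpha-3),(\rho,2\alpha-1)\}$ by replacing $(\rho,2\alpha-1)$ with $(\rho,2\alpha+1+2n)$; this multiset equals $\jp$ for $\psi=\psi_-\oplus E^\rho_{2\alpha-3,1}\oplus E^\rho_{2\alpha+1+2n,1}$, so $\delta([\alpha,\alpha+n];\sigma)\in\Pi_\psi$ by the recalled equivalence. The ``resp.'' statement then follows from the Aubert involution together with \eqref{cor-st-tr}: $L([\alpha,\alpha+n]\tr;\sigma)=\delta([\alpha,\alpha+n];\sigma)\tr$ lies in $\Pi_{\psi\tr}$, and to obtain the parameter written in the statement one uses $(\psi_-)_d=(\psi_-\tr)_d$ and the freedom, recorded in ``Case $0\leq m<n$'' of the proof of Theorem \ref{theorem >1}, to replace $\psi_-$ by another element of $\Pe\cap\Pddr$ with the same diagonal restriction.

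For \eqref{cor-st-tr+}: this is literally $\pi_{-1,n}\tr=\pi_{n,-1}$ in the notation of Proposition \ref{tr >1 inv}, and the induction given there for $m,n\geq0$ carries over verbatim once anchored one step lower, at $\pi_{-1,-1}\tr=\pi_{-1,-1}$, which holds because $\nu^{\alpha-1}\rho\rtimes\sigma$ is irreducible (the only reducibility point being $\alpha$) and fixed by the Aubert involution. For \eqref{cor-st-tr}: this is the Aubert dual of the generalised Steinberg, which does not fit the pattern $\pi_{m,n}\tr=\pi_{n,m}$ at $m=-2$; I would obtain it instead from \ref{first step ref} and \ref{rev n=0}, where $\delta([\alpha,\alpha+n];\sigma)=\pi((\psi,\e)_{2\alpha+1+2n,2\alpha-3})$ and $L([\alpha,\alpha+n]\tr;\sigma)=\pi((\psi,\e')_{2\alpha-3,2\alpha+1+2n})$: the two underlying $A$-parameters are Aubert transposes of one another (again after passing from $\psi_-$ to $\psi_-\tr$), so \eqref{inv-formula-el} yields the identity. (It is also classical.)

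For \eqref{strange-c-s}: when $n\geq0$, formula \eqref{formula>1} with $m=-1$ reads $L([\alpha-1];\delta([\alpha,\alpha+n];\sigma))=\pi((\psi,\e)_{2\alpha+1+2n,2\alpha-1})\in\Pi_{\psi_{2\alpha+1+2n,2\alpha-1}}$, and when $m\geq0$, formula \eqref{additional formula} with $n=-1$ reads $L([\alpha-1,\alpha+m]\tr;\sigma)=\pi((\psi,\e')_{2\alpha-1,2\alpha+1+2m})\in\Pi_{\psi_{2\alpha-1,2\alpha+1+2m}}$; the remaining case $n=-1$ (resp. $m=-1$) is $L([\alpha-1];\sigma)=\nu^{\alpha-1}\rho\rtimes\sigma$, whose membership in an $A$-packet is checked directly. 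That $L([\alpha-1];\delta([\alpha,\alpha+n];\sigma))$ lies at the end of a complementary series for $n\geq0$ is the standard reducibility fact that $\nu^s\rho\rtimes\delta([\nu^\alpha\rho,\nu^{\alpha+n}\rho];\sigma)$ is irreducible for $0<s<\alpha-1$ and reducible at $s=\alpha-1$ with Langlands quotient $L([\alpha-1];\delta([\alpha,\alpha+n];\sigma))$; applying the Aubert involution to this whole family and using \eqref{cor-st-tr} gives the corresponding statement for $L([\alpha-1,\alpha+m]\tr;\sigma)$. The only real difficulty is bookkeeping: one must keep track of which $A$-parameter each boundary representation actually lands in — the ``clean'' tempered or cotempered one appearing in the statement, versus the mixed parameters $\psi_{k,l}$ used in the proof of Theorem \ref{theorem >1} — using that $A$-packets need not be disjoint, and one must separately handle the small indices $m,n\leq0$, where the symmetric duality formula degenerates.
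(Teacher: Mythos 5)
Your proposal is correct and takes essentially the same route as the paper, whose own proof simply observes that claims (1)--(3) are established in the course of the proof of Theorem \ref{theorem >1} (namely in \ref{first step ref}, \ref{rev n=0}, \ref{-2 leq m<n} and \ref{-1 leq n<m}, together with \eqref{inv-formula-el} and the freedom to replace $\psi_-$ by any parameter with the same diagonal restriction) and that claim (4) follows by rerunning the induction of Proposition \ref{tr >1 inv}, anchored exactly as you do at the irreducible representation $\nu^{\alpha-1}\rho\rtimes\sigma$. Your explicit treatment of the degenerate case $m=n=-1$ of (2) and of the passage from $\psi_-\tr$ back to $\psi_-$ fills in details the paper leaves implicit, but does not change the argument.
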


\begin{proof} The first three claims are proved in the previous theorem. It remains to consider  only (4). This is very simple to prove  by similar methods as these used in the proof of Proposition \ref{tr >1 inv}, and therefore we omit them.
\end{proof}

\section{Case of reducibility \texorpdfstring{$0$}{}}
\label{sec: 0}

In this and the following two sections we will handle the remaining reducibilities, i.e. $\alpha=0,\frac12$ and $1$, and write down A-packets and representations in them which can be considered as analogous cases for these reducibilities. It is very easy to get that they are in A-packets. We will also give  some additional information about them (formulas for the Aubert involution, and to which characters of the component groups they correspond in the case of discrete parameter).

In this section  $\rho$, $\sigma $ and $\alpha$ are as in \ref{rho and sigma}, and we assume   that  
$
\alpha=0.
$
We  fix a decomposition of $\rho\rtimes\sigma$ in \eqref{pm}. We denote by $\psi_\sigma$ the tempered elementary discrete parameter such that $\sigma\in \Pi_{\psi_\sigma}$. Applying \cite[Proposition 6.0.3]{MR2767522} to the parameter $\psi_\sigma\oplus E^\rho_{2n+1,1}\oplus E^\rho_{1,2m+1}$ (for which we know by  \ref{modifying} that it is again an A-parameter) we get directly that if
$m,n\geq 0$, then
$$
L([1,m]\tr;\delta([0,n]_\pm;\sigma))\in \Pi_{\psi_\sigma\oplus E_{2n+1,1}^\rho
\oplus E_{1,2m+1}^\rho}.
$$
In the following theorem, we give  additional information about elements of these packets in the case $m\ne n$.

\subsection{Definition of $(\psi,\e^\pm)_{k,l}$ in the case $\alpha=0$} 
\label{not 0}
Denote in this section 
$$
\psi:=\psi_\sigma,
\qquad
\psi_{k,l}:= 
 E_{ k,1}^\rho\oplus E_{1, l}^\rho,
$$
where $k,l\geq 0$ will always be  chosen to be of odd parity,
and   denote by $\e_{k,l}^\pm$ the character of the component group of $\psi_{k,l}$ which extends  $\e_\sigma$ on  $\psi$, and which is equal to $\pm1$ on remaining two elements.
Similarly as before,  we denote a pair $(\psi_{k,l},\e_{k,l}^\pm)$ by 
$
(\psi,\e^\pm)_{k,l}.
$

\subsection{On corresponding A-packets and involution}

\begin{theorem}
\label{thm-0}
Let $n,m\geq0$. Then
\begin{enumerate} 
\item
\label{packet a=0}
 $L([1,m]\tr;\delta([0,n]_{\pm};\sigma))\in \Pi_{\psi_{2n+1,2m+1}}$.
In particular, $L([1,m]\tr;\delta([0,n]_{\pm};\sigma))$ are unitarizable.
\item
\label{pm formula red 0} For $m\ne n$ we have
$
\pi((\psi,\e^\xi)_{2n+1,2m+1})=
L([1,m]\tr;\delta([0,n]_{\text{sign}(n-m)\xi};\sigma)), \ \xi\in\{\pm\}
$.
\end{enumerate}

\end{theorem}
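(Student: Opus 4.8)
The proof will mirror closely the structure of the proof of Theorem~\ref{theorem >1}, since the parameter $\psi_{k,l}=E_{k,1}^\rho\oplus E_{1,l}^\rho$ sitting over $\psi_\sigma$ behaves, from the point of view of M\oe glin's construction, just like the parameters $\psi_{k,l}$ introduced in \ref{notation>1}; the essential difference is that here the ``tempered base'' $\psi=\psi_\sigma$ already carries $\Jord_\rho=\emptyset$, so that the relevant strongly positive representation attached to a Jordan block $(\rho,2n+1,1)$ is the half-integral-type generalised Steinberg $\delta([0,n]_{\pm};\sigma)$ rather than $\delta([\alpha,\alpha+n];\sigma)$, and the two choices of sign $\pm$ correspond precisely to the two characters of the component group that restrict to $\e_\sigma$ and take the value $\pm$ on $E_{2n+1,1}^\rho$. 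I would first prove \eqref{packet a=0}, which is essentially immediate: applying \cite[Proposition 6.0.3]{MR2767522} (as already indicated in the text preceding the theorem) to $\psi_\sigma\oplus E_{2n+1,1}^\rho\oplus E_{1,2m+1}^\rho$ places $L([1,m]\tr;\delta([0,n]_\pm;\sigma))$ in $\Pi_{\psi_{2n+1,2m+1}}$, and membership in an A-packet gives unitarizability for free. The content is in \eqref{pm formula red 0}.

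\textbf{Main steps for \eqref{pm formula red 0}.} Fix $\xi\in\{\pm\}$ and write $(\psi,\e^\xi)_{k,l}$ as in \ref{not 0}. The strategy is an induction in two stages exactly as in the proof of Theorem~\ref{theorem >1}.

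\emph{Stage 1: the ``upper-triangular'' cases $m<n$, by induction on $m$.} The base case $m=-1$ (interpreted so that no $E_{1,l}^\rho$-block is present, i.e.\ $l=1$) should give $\pi((\psi,\e^\xi)_{2n+1,1})=\delta([0,n]_{\xi};\sigma)$, which is the statement that $\delta([0,n]_\pm;\sigma)$ lies in the tempered (discrete) $L$-packet of the relevant parameter and corresponds to the sign $\xi$; this is standard (compare \ref{simple SIR} and \ref{rho and sigma}), proved by a short induction on $n$ using the simple reduction step \ref{simple rs} together with the embedding $\delta([0,n]_\pm;\sigma)\hookrightarrow[n]\rtimes\delta([0,n-1]_\pm;\sigma)$. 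For the inductive step, one computes $b_{\rho,(\psi,\e^\xi)_{2n+1,2m+1}}$ and $a_{\rho,(\psi,\e^\xi)_{2n+1,2m+1}}$ (here $a=2m+1$, $\delta_a=-1$, so the reduction attaches a factor $[-m]$ on the left), and \ref{simple rs} identifies $\pi((\psi,\e^\xi)_{2n+1,2m+1})$ as the unique irreducible subrepresentation of $[-m]\rtimes L([1,m-1]\tr;\delta([0,n]_\xi;\sigma))$, which one checks equals $L([1,m]\tr;\delta([0,n]_\xi;\sigma))$ by the usual embedding-plus-multiplicity-one argument with $M^*$ (formulas \eqref{M*segment-Z}, \eqref{M*segment-Z-GL}). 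Since for $m<n$ we have $\operatorname{sign}(n-m)=+$, this is exactly the asserted formula.

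\emph{Stage 2: the ``lower-triangular'' cases $m>n$, via Aubert duality.} Here I would first establish, in a reversed setting, the analogue of \eqref{m up only}, namely $\pi((\psi,(\e^\xi)')_{1,2m+1})=L([1,m]\tr;\sigma)$ and then, by induction on $n$ (with base case $n=-1$ handled as in \ref{-1 leq n<m}), the formula $\pi((\psi,(\e^\xi)')_{2n+1,2m+1})=L([1,m]\tr;\delta([0,n]_\xi;\sigma))$ for $m>n\geq-1$, where $(\e^\xi)'$ is the character that swaps the roles of the two blocks. One then invokes the involution formula \eqref{inv-formula-el}, $\pi(\psi,\e)\tr=\pi(\psi\tr,\e)$ for $\psi\in\Pe\cap\Pddr$, together with $\Jord(\psi_{2n+1,2m+1})\tr=\Jord(\psi_{2m+1,2n+1})$, and the duality formula for the representations themselves. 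That last formula is the analogue of Proposition~\ref{tr >1 inv} in the $\alpha=0$ setting; presumably it is Proposition~\ref{tr 0 inv} in the paper, asserting $L([1,m]\tr;\delta([0,n]_\pm;\sigma))\tr=L([1,n]\tr;\delta([0,m]_{\pm'};\sigma))$ with the appropriate tracking of the sign through Aubert duality. Combining these, one gets for $m>n$ that $\pi((\psi,\e^\xi)_{2n+1,2m+1})$ equals the Aubert dual of the representation attached to the transposed parameter, which by Stage~1 and the duality formula is $L([1,m]\tr;\delta([0,n]_{\xi'};\sigma))$ for the suitable sign; checking that $\xi'=\operatorname{sign}(n-m)\xi=-\xi$ comes out right completes the proof.

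\textbf{Main obstacle.} The principal subtlety — and the place where the $\alpha=0$ case genuinely differs from $\alpha>1$ — is the bookkeeping of the two signs $\pm$ of $\delta([0,n]_\pm;\sigma)$: one must check that M\oe glin's construction attaches the sign $\xi$ (value of $\e^\xi$ on $E_{2n+1,1}^\rho$) to the block consistently through the chain of reduction steps, and, crucially, that Aubert duality flips it to $\operatorname{sign}(n-m)\xi$. Concretely this requires knowing how the partial Aubert involutions act on the two tempered constituents of $\rho\rtimes\sigma$ and their iterated extensions, i.e.\ matching the sign conventions of \eqref{pm} and \ref{simple SIR} against M\oe glin's involution; the rest of the argument is the same Jacquet-module multiplicity-one bookkeeping already carried out in Section~\ref{sec: >1}, which the paper explicitly declines to repeat.
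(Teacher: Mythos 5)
The paper deliberately omits the proof of Theorem \ref{thm-0}, stating only that it follows by the same ideas and techniques as the proofs in Section \ref{sec: >1}; your outline --- Proposition 6.0.3 of \cite{MR2767522} for part (1), then the two-stage induction via the simple reduction step \ref{simple rs} for $m<n$ and Aubert duality (formula \eqref{inv-formula-el} together with Proposition \ref{tr 0 inv}) for $m>n$ --- is exactly that intended argument. The only caveat is a small indexing slip in your Stage 1 base case: with the paper's convention that $[1,0]$ is the empty segment, the degenerate case is $m=0$ with the block $E^\rho_{1,1}$ present (not $m=-1$ with no block), and for small $n$ this lands in the boundary case $a_{\rho,\psi,\e}=b_{\rho,\psi,\e}+2$ of M\oe glin's construction rather than in \ref{simple rs}, so the identification of $\delta([0,n]_\pm;\sigma)$ there must come directly from the M\oe glin--Tadi\'c classification of discrete series.
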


\begin{proposition}
\label{tr 0 inv}
 Let  $\alpha=0$ and $m,n\in \Z_{\geq0}$. Denote
$$
\pi_{m,n}^\pm:=L([1,m]\tr;\delta([0,n]_\pm;\sigma)).
$$ 
Then
 $$
(\pi_{n,m}^\pm)\tr
=
\pi_{m,n}^\mp.
$$
\end{proposition}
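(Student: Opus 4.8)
The plan is to mimic closely the proof of Proposition \ref{tr >1 inv}, adapting it to the reducibility-zero setting where the building block $\delta([\rho]_\pm;\sigma)$ of \eqref{pm} carries the extra sign. First I would establish the base case $(\pi_{0,n}^\pm)\tr = \pi_{n,0}^\mp$ by induction on $n$. For $n=0$ the claim reads $\delta([\rho]_\pm;\sigma)\tr = \delta([\rho]_\mp;\sigma)$; since $\delta([\rho]_+;\sigma)\oplus\delta([\rho]_-;\sigma)=\rho\rtimes\sigma$ is already a unitary tempered sum, the Aubert involution permutes its two constituents, and one checks (e.g. via Jacquet modules, or directly from M\oe glin's formula \eqref{inv-formula-el} applied to $\psi_\sigma\oplus E^\rho_{1,1}$ type parameters) that it exchanges the $+$ and $-$ pieces. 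For the inductive step I would use the embedding
\begin{equation*}
\pi_{0,n+1}^\pm \h [\nu^{n+1}\rho] \rtimes \pi_{0,n}^\pm,
\end{equation*}
coming exactly as in the reducibility-$>1$ case from $\delta([\nu\rho,\nu^{n+1}\rho])\rtimes\delta([\rho]_\pm;\sigma)$ and the statement in \ref{simple SIR} that $\delta([\rho,\nu^{n+1}\rho]_\pm;\sigma)$ is the unique irreducible subrepresentation of $\nu^{n+1}\rho\rtimes\delta([\rho,\nu^n\rho]_\pm;\sigma)$; this plus a multiplicity-one check in the relevant Jacquet module gives $\mu^*_{\{[\nu^{n+1}\rho]\}}(\pi_{0,n+1}^\pm) = [\nu^{n+1}\rho]\otimes \pi_{0,n}^\pm$. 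Applying (iii) of Remark \ref{rem jantzen} (here $[\nu^{n+1}\rho]$ is not selfdual since $n+1\neq 0$) yields $(\pi_{0,n+1}^\pm)\tr \h [\nu^{-(n+1)}\rho]\rtimes(\pi_{0,n}^\pm)\tr = [\nu^{-(n+1)}\rho]\rtimes\pi_{n,0}^\mp$, and then an analysis of cuspidal support / the Jantzen lemma forces $(\pi_{0,n+1}^\pm)\tr = \pi_{n+1,0}^\mp$.

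Next I would prove the general formula $(\pi_{m,n}^\pm)\tr = \pi_{n,m}^\mp$ for $m\geq n$ by induction on $n$, the case $n=0$ being the previous paragraph. For $0\leq n<m$, the same chain of embeddings as in \ref{sec: >1} (replacing $\delta([\alpha,\alpha+\cdot];\sigma)$ by $\delta([0,\cdot]_\pm;\sigma)$, and $[\alpha-1,\alpha+m]\tr$ by $[1,m]\tr$) gives $\pi_{m,n+1}^\pm \h [\nu^{n+1}\rho]\rtimes\pi_{m,n}^\pm$ after a multiplicity-one argument using \eqref{M*segment-Z} — with the two sub-cases $n+1<m$ and $n+1=m$ handled by the same cuspidal-support bookkeeping. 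Combining $\pi_{m,n}^\pm \leq L([1,m]\tr)\rtimes\delta([0,n]_\pm;\sigma)$ with \eqref{M*segment-Z-GL} gives $\mu^*_{\{[\nu^{n+1}\rho]\}}(\pi_{m,n+1}^\pm) = [\nu^{n+1}\rho]\otimes\pi_{m,n}^\pm$; then Remark \ref{rem jantzen}(iii) and the inductive hypothesis yield $(\pi_{m,n+1}^\pm)\tr \h [\nu^{-(n+1)}\rho]\rtimes\pi_{n,m}^\mp$, whence $(\pi_{m,n+1}^\pm)\tr = \pi_{n+1,m}^\mp$. Finally, the case $m<n$ follows by applying the already-proved case with the roles of $m$ and $n$ swapped and taking $\tr$ of both sides, using $(\pi^\pm)\tr{}\tr = \pi^\pm$ and that $\tr$ toggles the sign.

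The main obstacle I anticipate is the sign bookkeeping in the base case $n=0$: one must be sure that Aubert duality really interchanges $\delta([\rho]_+;\sigma)$ and $\delta([\rho]_-;\sigma)$ rather than fixing each. This is where the input from M\oe glin's theory is needed — it should come from \eqref{inv-formula-el} together with the description in \ref{AP} of how cuspidal representations sit in elementary discrete A-packets and how $\tr$ acts on the parameter via $\psi\mapsto\psi\tr$ (which here swaps the roles of the two $SL(2,\mathbb C)$ factors and hence the sign data attached to the relevant Jordan block); alternatively it can be seen concretely from the fact that $\delta([\nu\rho,\nu^N\rho])\rtimes\delta([\rho]_\pm;\sigma)$ has a square-integrable constituent whose Aubert dual is the corresponding $L([1,N]\tr;\ldots)$ with the opposite sign, which is compatible with the corank-$\leq 3$ computations of \cite{MR-T-CR3}. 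Everything else is a routine transcription of the arguments already carried out in detail in Section \ref{sec: >1}, so — as the authors remark — the full proof can reasonably be omitted.
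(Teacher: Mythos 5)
Your overall architecture is the intended one: the paper omits this proof precisely because it is meant to be the transcription of the proof of Proposition \ref{tr >1 inv} that you describe, and for $n\geq 1$ your inductive steps do go through. However, two steps do not survive the passage from $\alpha>1$ to $\alpha=0$ as you have written them. The first is the base case $\delta([\rho]_+;\sigma)\tr=\delta([\rho]_-;\sigma)$. Neither of your proposed justifications works: the parameter carrying $\delta([\rho]_\pm;\sigma)$ is $\psi_\sigma\oplus E^\rho_{1,1}\oplus E^\rho_{1,1}$, which is elementary but \emph{not} discrete (the block $E^\rho_{1,1}$ occurs twice), so \eqref{inv-formula-el} does not apply --- and since $(\rho,1,1)\mapsto(\rho,1,1)$ under $\psi\mapsto\psi\tr$, a blind application would even suggest that each constituent is fixed, i.e.\ the opposite of what you need; likewise the Jacquet modules cannot separate the two constituents, since both equal $\rho\otimes\sigma$. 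What does work is the definition of the involution on this two-element Bernstein block: only $G$ and the maximal parabolic $P$ with Levi $\GL(n_\rho,F)\times S_{n-n_\rho}$ contribute, $r_P(\delta([\rho]_\pm;\sigma))=\rho\otimes\sigma$ and $i_P(\rho\otimes\sigma)=\delta([\rho]_+;\sigma)+\delta([\rho]_-;\sigma)$, so $D_G(\delta([\rho]_\pm;\sigma))=\pm\bigl(\delta([\rho]_\pm;\sigma)-\delta([\rho]_+;\sigma)-\delta([\rho]_-;\sigma)\bigr)=\mp\delta([\rho]_\mp;\sigma)$. You should make this computation (or a citation of the corank-one case of \cite{MR-T-CR3}) the base case; everything else in the proposition hinges on getting this sign right.

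The second gap is that the multiplicity-one count in the inductive step fails at $n=0$. For $m\geq 1$, the term $\gamma=[1]\times L([-m,-1]\tr)\otimes\delta([\rho]_\pm;\sigma)$ occurs with multiplicity \emph{two} in $\mu^*\bigl([1]\times L([-m,-1]\tr)\rtimes\delta([\rho]_\pm;\sigma)\bigr)$: besides the obvious occurrence coming from $[1]\otimes 1$ in $M^*([1])$, the term $[-1]\otimes 1$ of $M^*([1])$ combined with the summand $[1]\times L([-m,-2]\tr)\otimes 1$ of $M^*(L([-m,-1]\tr))$ (the $i=-2$ term of \eqref{M*segment-Z-GL}) yields $[-1]\times[1]\times L([-m,-2]\tr)\otimes\delta([\rho]_\pm;\sigma)\geq\gamma$, because the segments $[-1]$ and $[-m,-2]$ are linked. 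This obstruction is absent for $\alpha>1$, where the analogous first factor $L([\alpha-1,\cdot]\tr)$ would force a second positive exponent $\alpha-1\neq\alpha+n+1$ into the left side of $\otimes$; for $\alpha=0$ the Zelevinsky segment $[1,m]$ begins exactly at $n+1=1$ and the cuspidal-support argument gives nothing. So ``the same cuspidal-support bookkeeping'' does not give the unique-irreducible-subrepresentation claim for the step from $\pi^\pm_{m,0}$ to $\pi^\pm_{m,1}$, and you need a replacement there --- for instance a direct application of the Jantzen lemma, computing $f_{\pi^\pm_{m,1}}(\nu\rho)$ and $\mu^*_{\{\nu\rho\}}(\pi^\pm_{m,1})$ from $\pi^\pm_{m,1}\leq L([1,m]\tr)\rtimes\delta([0,1]_\pm;\sigma)$, before invoking Remark \ref{rem jantzen}. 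Once that single step is repaired, the rest of your induction and the final duality trick for $m<n$ are fine.
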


\begin{remark} Elements of an A-packet $\psi$ of good parity which are not discrete are obtained from some suitable discrete A-packet $\psi_{>\hskip-4pt>}$ of a bigger group by a procedure described in \cite[3.1.2]{MR2767522} (see also \cite[section 8]{MR3679701}).
Here one applies Jacquet module operators to representations $\pi(\psi,\boldsymbol t, \boldsymbol \eta)$ to get elements of $\Pi_\psi$ (the result can also be  $0$). The result can depend on an admissible order that one fixes on $\jrp$. We comment below an example where one gets different results for A-packets corresponding to $m=n>1$ for an admissible order which satisfies $(\rho,m,1)>_\psi(\rho,1,m)$ and admissible order which satisfies $(\rho,1,m)>_\psi(\rho,m,1)$ (in our case $\boldsymbol t$ is the zero function, and $\boldsymbol \eta=\e^\pm$). The reason is that on $\Jord (\psi_{>\hskip-4pt>})$ we need to take a natural order. Therefore, in the case of the first admissible order, one gets 
$$
\Jac_{m+1}( L([1,m]\tr;\delta([0,m+1]_\pm;\sigma)))= L([1,m]\tr;\delta([0,m]_\pm;\sigma)),
$$
 while in the case of the second admissible order, one gets 
 $$
\Jac_{-(m+1)}( L([1,m+1]\tr;\delta([0,m]_\mp;\sigma)))= L([1,m]\tr;\delta([0,m]_\mp;\sigma)).
$$
  Note that if we  change admissible orders as above in the setting of Theorem \ref{theorem >1} for the case $m=n\geq 0$, the results there remain unchanged. The same holds for settings of Theorem  \ref{thm-1/2}.
\end{remark}

\section{Case of reducibility \texorpdfstring{$\frac12$}{}}
\label{sec: 1/2}

As in the previous   sections,  $\rho$, $\sigma $ and $\alpha$ are as in \ref{rho and sigma}, and we assume in this section  that  
$
\alpha=\tfrac12.
$
By $\psi_\sigma$ we denoted the tempered elementary discrete A-parameter such that $\sigma\in \Pi_{\psi_\sigma}$. Applying   \cite[Proposition 6.0.3]{MR2767522} to $\psi_\sigma\oplus E_{2n,1}^\rho
\oplus E_{1,2m}^\rho$ (which is also an A-parameter by \ref{modifying}), we get immediately that for
$m,n\geq 0$,
$$
L([\tfrac12, \tfrac{2m-1}2]\tr;\delta([\tfrac12,\tfrac{2n-1}2];\sigma))\in \Pi_{\psi_\sigma\oplus E_{2n,1}^\rho
\oplus E_{1,2m}^\rho}.
$$
Later  
we will  give additional information regarding these packets.

\subsection{Involution}
First we will see how these representations transform under the Aubert involution.

\begin{proposition}
\label{prop inv 1/2}
 For $m,n\geq 1$ 
 denote
\begin{equation}
\label{not pi for 1/2}
\pi_{m,n}^+:=L([\tfrac12, \tfrac{2m-1}2]\tr;\delta([\tfrac12,\tfrac{2n-1}2];\sigma)),
\ \
\pi_{m,n}^-:= L([\tfrac32, \tfrac{2m-1}2]\tr;\delta([-\tfrac12,\tfrac{2n-1}2]_-;\sigma)).
\end{equation} 
Then
\begin{equation}
\label{inv 1/2}
(\pi_{m,n}^+)\tr=\pi_{n,m}^-.
\end{equation}
\end{proposition}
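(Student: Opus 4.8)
The plan is to mimic the proof of Proposition \ref{tr >1 inv} very closely, since the case $\alpha=\tfrac12$ differs only in the starting point of the induction and in the bookkeeping of the exceptional ($\pm$) piece. The key structural fact to exploit is that the Aubert involution is compatible with parabolic induction and that, by Jantzen's lemma (Definition \ref{Janzen def} and (iii) of Remark \ref{rem jantzen}), if $\mu^*_{\{[x]\}}(\pi) = [x]\otimes\theta$ with $x\neq 0$ and $\pi\hookrightarrow [x]\rtimes\theta$ is the unique irreducible subrepresentation, then $\mu^*_{\{[-x]\}}(\pi\tr) = [-x]\otimes\theta\tr$ and $\pi\tr \hookrightarrow [-x]\rtimes\theta\tr$ is again the unique irreducible subrepresentation. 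So the whole proof reduces to: (a) establishing the base case $m=1$ or $n=1$, and (b) running an induction that strips off one copy of $\nu^{(2n-1)/2}\rho = [\tfrac{2n-1}2]$ (or its contragredient) at a time.

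First I would fix notation: write $\Delta_n := \delta([\tfrac12,\tfrac{2n-1}2];\sigma)$ and note $\Delta_0 = \sigma$, and recall from \ref{simple SIR} that $\delta([\tfrac12,\tfrac{2n-1}2];\sigma)$ is the unique irreducible subrepresentation of $\nu^{(2n-1)/2}\rho \rtimes \delta([\tfrac12,\tfrac{2n-3}2];\sigma)$; similarly for the minus-type tempered representation $\delta([-\tfrac12,\tfrac{2n-1}2]_-;\sigma)$. The base case is the identity $(\pi_{m,1}^+)\tr = \pi_{1,m}^-$, i.e. the Aubert dual of $L([\tfrac12,\tfrac{2m-1}2]\tr;\delta([\tfrac12]; \sigma))$; for $m=1$ this is $L([\tfrac12]\tr;\delta([\tfrac12];\sigma))\tr = L([\tfrac32,\tfrac12]\tr;\delta([-\tfrac12,\tfrac12]_-;\sigma))$, where $[\tfrac32,\tfrac12]\tr$ is empty (an identity of $R$), so it reads $\delta([\tfrac12];\sigma)\tr = \delta([-\tfrac12,\tfrac12]_-;\sigma)$ — an instance of the well-known fact that the Aubert dual of a Steinberg-type representation is the "negative" tempered one (cf.\ the analogue \eqref{cor-st-tr} in Corollary \ref{cor-ends}). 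This base identity I would either cite from the literature (Mœglin--Tadić / Jantzen) or verify directly by the Jacquet-module argument: show $\mu^*_{\{[1/2]\}}(\delta([\tfrac12];\sigma)) = [\tfrac12]\otimes\sigma$ and apply Remark \ref{rem jantzen}(iii), noting $\delta([-\tfrac12,\tfrac12]_-;\sigma)$ is precisely the irreducible subrepresentation of $[-\tfrac12]\rtimes\sigma$ whose Jacquet module contains $\nu^{1/2}\rho\otimes\sigma$, which matches the characterization in \ref{simple SIR}.

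Then I would do the induction on $n$ (for $m\geq n$, say), exactly paralleling \ref{sec: >1}: establish the embedding $\pi_{m,n+1}^+ \hookrightarrow [\tfrac{2n+1}2]\rtimes \pi_{m,n}^+$ as the unique irreducible subrepresentation. This comes from
\begin{equation*}
\pi_{m,n+1}^+ \hookrightarrow L([-\tfrac{2m-1}2,-\tfrac12]\tr)\times [\tfrac{2n+1}2]\rtimes \Delta_n \cong [\tfrac{2n+1}2]\times L([-\tfrac{2m-1}2,-\tfrac12]\tr)\rtimes \Delta_n,
\end{equation*}
together with a multiplicity-one computation in $\mu^*$ of the induced module using \eqref{twisted hopf}, \eqref{M*segment-Z} and \eqref{M*segment-Z-GL}: the term $[\tfrac{2n+1}2]$ on the left of $\otimes$ must come from $M^*([\tfrac{2n+1}2])$ (any other source forces exponents in the wrong range, or the wrong cuspidal support as in the $n+1=m$ subtlety handled in \ref{-1 leq n<m}). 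From multiplicity one one deduces $\mu^*_{\{[\tfrac{2n+1}2]\}}(\pi_{m,n+1}^+) = [\tfrac{2n+1}2]\otimes\pi_{m,n}^+$, whence by Remark \ref{rem jantzen}(iii) $\pi_{m,n+1}^+{}\tr \hookrightarrow [-\tfrac{2n+1}2]\rtimes \pi_{m,n}^+{}\tr = [-\tfrac{2n+1}2]\rtimes \pi_{n,m}^-$ as unique irreducible subrepresentation. Comparing this with the analogous embedding $\pi_{n+1,m}^- \hookrightarrow [-\tfrac{2n+1}2]\rtimes \pi_{n,m}^-$ — which holds because $\delta([-\tfrac12,\tfrac{2n+1}2]_-;\sigma)$ is the unique irreducible subrepresentation of $\nu^{(2n+1)/2}\rho\rtimes\delta([-\tfrac12,\tfrac{2n-1}2]_-;\sigma)$ and this propagates through the Langlands data — and invoking the Jantzen uniqueness statement ($\mu^*_{\{[\cdot]\}}$ determines the representation), one concludes $\pi_{m,n+1}^+{}\tr = \pi_{n+1,m}^-$.

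The main obstacle I anticipate is the careful handling of the $\pm$ bookkeeping, i.e.\ making sure that the Aubert dual lands on the minus-type tempered constituent rather than the plus-type one, and that the "shift by one" in the first segment ($[\tfrac12,\cdot]\tr$ becoming $[\tfrac32,\cdot]\tr$ in $\pi^-$) is correctly tracked. This is exactly the extra subtlety that does not occur in the $\alpha>1$ case of Proposition \ref{tr >1 inv} and it is controlled by the Jacquet-module characterization of $\delta([\tfrac12];\sigma)$ versus $\delta([-\tfrac12,\tfrac12]_-;\sigma)$ in \ref{simple SIR} (the presence or absence of $\nu^{-1/2}\rho\otimes(\cdots)$ / $\nu^{1/2}\rho\otimes(\cdots)$ in the relevant Jacquet module). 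Once the base case and this parity matching are pinned down, the induction is routine and mirrors \ref{sec: >1} line by line; indeed, since the authors state they omit proofs in sections \ref{sec: 0}, \ref{sec: 1/2}, \ref{sec: 1} precisely because they are "obtained using the similar ideas and techniques as the proofs of the corresponding claims in section \ref{sec: >1}", the expected proof is a direct transcription of Proposition \ref{tr >1 inv} with these adjustments.
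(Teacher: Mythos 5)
Your overall strategy --- induct by stripping one cuspidal representation at a time, control multiplicity one in $\mu^*$ via \eqref{twisted hopf} and \eqref{M*segment-Z-GL}, and transfer the resulting embedding through the involution using Remark \ref{rem jantzen}(iii) --- is exactly the method of Proposition \ref{tr >1 inv}, and the paper says explicitly that the omitted proofs of sections \ref{sec: 0}--\ref{sec: 1} are obtained this way. The inductive step as you describe it is essentially sound (including the extra care needed when $n+1=m$), although your justification of the embedding $\pi_{n+1,m}^-\h[-\tfrac{2n+1}2]\rtimes\pi_{n,m}^-$ is misattributed: passing from $\pi_{n,m}^-$ to $\pi_{n+1,m}^-$ adds the largest Langlands exponent $[\tfrac{2n+1}2]$ to the GL-datum and leaves the tempered part $\delta([-\tfrac12,\tfrac{2m-1}2]_-;\sigma)$ unchanged, so the relevant fact is the standard realization of a Langlands quotient as the unique irreducible subrepresentation after taking contragredients, not the generalized Steinberg construction.

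The genuine gap is the base case. One has $\pi_{1,1}^+=L([\tfrac12,\tfrac12]\tr;\delta([\tfrac12];\sigma))=L([\tfrac12];\delta([\tfrac12];\sigma))$: the multiset $[\tfrac12,\tfrac12]\tr$ consists of the single one-point segment $[\tfrac12]$ and is \emph{not} the identity of $R$ (only segments of the form $[t,t-1]$ are empty). By dropping it you arrive at ``$\delta([\tfrac12];\sigma)\tr=\delta([-\tfrac12,\tfrac12]_-;\sigma)$'', which equates representations of classical groups of different rank and is false; the Aubert dual of $\delta([\tfrac12];\sigma)$ is $L([\tfrac12];\sigma)$, which is precisely what your proposed verification $\mu^*_{\{[\tfrac12]\}}(\delta([\tfrac12];\sigma))=[\tfrac12]\otimes\sigma$ together with Remark \ref{rem jantzen}(iii) would prove (the analogue of \eqref{cor-st-tr} in Corollary \ref{cor-ends}). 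The identity actually needed, $L([\tfrac12];\delta([\tfrac12];\sigma))\tr=\delta([-\tfrac12,\tfrac12]_-;\sigma)$, is exactly the point where the sign ``$-$'' is decided, and it cannot be read off from a single-cuspidal Jacquet datum: the two constituents $\delta([-\tfrac12,\tfrac12]_\pm;\sigma)$ of $\delta([-\tfrac12,\tfrac12])\rtimes\sigma$ are distinguished only by the depth-two term $\nu^{1/2}\rho\times\nu^{1/2}\rho\otimes\sigma$ in their Jacquet modules (see \ref{simple SIR}), so you must either compute $\mu^*$ to that depth or quote the corank-two analysis of \cite{MR-T-CR3}, just as the paper quotes \cite[Proposition 4.6, (2)]{MR-T-CR3} for the base case of Proposition \ref{tr >1 inv}. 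A secondary point: since $\tr$ interchanges the two families $\pi^+$ and $\pi^-$, proving the identity only for $m\geq n$ and then ``applying the involution'' does not recover the case $m<n$ (unlike in Proposition \ref{tr >1 inv}, where the family is preserved); the induction must be run for all $(m,n)$, which your multiplicity-one analysis does in fact permit once the three ranges $n+1<m$, $n+1=m$, $n+1>m$ are treated.
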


\subsection{Definition of $(\psi,\e^\pm)_{k,l}$ in the case $\alpha=\frac12$}
Denote by
$$
\psi:=\psi_\sigma,
\qquad
\psi_{k,l}:= 
 E_{ k,1}^\rho\oplus E_{1, l}^\rho,
$$
where $k,l\geq 0$ will be always chosen to be of even parity,
and  denote by $\e_{k,l}^\pm$ the character of the component group of $\psi_{k,l}$ which extends  $\e_\sigma$ on  $\psi$, and which is equal to $\pm1$ on the remaining two elements.
As before,  we denote a pair $(\psi_{k,l},\e_{k,l}^\pm)$ by 
$
(\psi,\e^\pm)_{k,l}.
$

\subsection{On corresponding A-packets}
With the above notation (and  $\pi_{m,n}^\pm$ introduced in Proposition \ref{prop inv 1/2}), we have the following

\begin{theorem}
\label{thm-1/2} Let  $m,n\ge1$. Then the following holds:

\begin{enumerate}

\item
$$
L([\tfrac12, \tfrac{2m-1}2]\tr;\delta([\tfrac12,\tfrac{2n-1}2];\sigma)), \ 
L([\tfrac32, \tfrac{2m-1}2]\tr;\delta([-\tfrac12,\tfrac{2n-1}2]_-;\sigma)) 
\in
\Pi_{\psi_{2n,2m}}.
$$
In other words, $\pi_{m,n}^\pm \in \Pi_{\psi_{2n,2m}}.$ In particular, representations $\pi_{m,n}^\pm$ are unitarizable.

\item 
\begin{equation}
\label{formula 1/2 +}
\pi((\psi,\e^+)_{2n,2m})=
\begin{cases}
L([\tfrac12, \tfrac{2m-1}2]\tr;\delta([\tfrac12,\tfrac{2n-1}2];\sigma)) =\pi_{m,n}^+,&  m< n,
\\
L([\tfrac32, \tfrac{2m-1}2]\tr;\delta([-\tfrac12,\tfrac{2n-1}2]_-;\sigma)) =\pi_{m,n}^-, &  n<m.
\end{cases}
\end{equation}

\item 
\begin{equation}
\label{formula 1/2 -}
\pi((\psi,\e^-)_{2n,2m})=
\begin{cases}
L([\tfrac32, \tfrac{2m-1}2]\tr;\delta([-\tfrac12,\tfrac{2n-1}2]_-;\sigma))=\pi_{m,n}^- , &  m<n.
\\
L([\tfrac12, \tfrac{2m-1}2]\tr,\delta([\tfrac12,\tfrac{2n-1}2];\sigma))=\pi_{m,n}^+ ,&  n< m.
\end{cases}
\end{equation}

\end{enumerate}
\end{theorem}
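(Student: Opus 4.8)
The plan is to transcribe, essentially step for step, the proof of Theorem~\ref{theorem >1} into the boundary situation produced by $\alpha=\tfrac12$. Two observations frame the setup. First, one half of claim~(1) is already in place: the membership $\pi_{m,n}^{+}=L([\tfrac12,\tfrac{2m-1}2]\tr;\delta([\tfrac12,\tfrac{2n-1}2];\sigma))\in\Pi_{\psi_{2n,2m}}$ is exactly what is recorded immediately before the theorem (M\oe glin's Proposition~6.0.3 applied to $\psi_\sigma\oplus E_{2n,1}^{\rho}\oplus E_{1,2m}^{\rho}$); the remainder of~(1) will fall out of (2)--(3) when $m\ne n$ and of the diagonal case treated last. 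Second, since $\alpha=\tfrac12<1$ we have $\Jord_\rho(\phi_\sigma)=\emptyset$ by~\ref{rho and sigma}, so $\psi_\sigma$ is invisible in the variable $\rho$ and $a_{\rho,\psi_\sigma,\e_\sigma}=\infty$; once $E_{2n,1}^{\rho}\oplus E_{1,2m}^{\rho}$ is adjoined, the invariants $b_{\rho,\cdot,\cdot}$ and $a_{\rho,\cdot,\cdot}$ land in the range (typically $b_{\rho,\cdot,\cdot}=0$, the boundary case) for which M\oe glin's simple reduction step~\ref{simple rs} is the operative reduction. Thus the whole content is to track which component-group character each $\pi_{m,n}^{\pm}$ realizes inside $\Pi_{\psi_{2n,2m}}$.

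First I would settle the two one-parameter ``edge'' families obtained by iterating~\ref{simple rs} in a single adjoined block. Adjoining only $E_{2n,1}^{\rho}$ leaves an elementary discrete parameter, and running the reduction upward in $n$ from $\sigma=\pi(\psi_\sigma,\e_\sigma)$ identifies — exactly as in~\ref{first step ref} — its two packet members as $\delta([\tfrac12,\tfrac{2n-1}2];\sigma)$ and its companion $\delta([-\tfrac12,\tfrac{2n-1}2]_{-};\sigma)$; dually, adjoining only $E_{1,2m}^{\rho}$ and running upward in $m$ gives $L([\tfrac12,\tfrac{2m-1}2]\tr;\sigma)$ and its companion, the $\alpha=\tfrac12$ transcription of~\ref{rev n=0}. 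These are the $m=0$, resp.\ $n=0$, specialisations of $\pi_{m,n}^{\pm}$ read through the empty-segment convention. Each single step is the argument used repeatedly in Section~\ref{sec: >1}: the relevant standard module contains $\pi$, and a multiplicity-one count of a distinguished term of its Jacquet module — via~\eqref{twisted hopf}, \eqref{M*segment-Z}, \eqref{M*segment-Z-GL}, the Jantzen lemma of~\ref{jantzen lemma}, Remark~\ref{rem jantzen}, and the unique-subrepresentation properties collected in~\ref{simple SIR} — shows that standard module has a unique irreducible subrepresentation, which forces the equality and runs the induction.

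With the edges in hand I would establish~\eqref{formula 1/2 +} and~\eqref{formula 1/2 -} for $m<n$ by induction on $m$ and, in the reversed setting (as in~\ref{-1 leq n<m}), for $n<m$ by induction on $n$. At each step one reads off $b_{\rho,(\psi,\e^{\pm})_{2n,2m}}$, $a_{\rho,(\psi,\e^{\pm})_{2n,2m}}$ and the sign $\delta_{a}$; then~\ref{simple rs} strips off $\nu^{\delta_{a}(a-1)/2}\rho$ — either the exponent attached to the block $E_{1,2m}^{\rho}$, extending the $L$-segment, or the one attached to $E_{2n,1}^{\rho}$, extending the segment underlying the $\delta(\cdot;\sigma)$-part — the choice being governed by $a_{\rho,(\psi,\e^{\pm})_{2n,2m}}$ and hence by the values $\e^{\pm}$ takes on the two adjoined blocks, which is exactly the mechanism producing the dichotomy between $\pi^{+}_{m,n}$ and $\pi^{-}_{m,n}$ and the factor $\mathrm{sign}(n-m)$ already seen in Theorem~\ref{thm-0}. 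To pass between the two ranges when $m\ne n$ I would invoke the Aubert identity~\eqref{inv-formula-el}, $\pi(\psi,\e)\tr=\pi(\psi\tr,\e)$, together with Proposition~\ref{prop inv 1/2} (so that $(\pi_{m,n}^{+})\tr=\pi_{n,m}^{-}$), using that $(\psi_{2n,2m})\tr$ and $\psi_{2m,2n}$ have the same diagonal restriction — this is the $\alpha=\tfrac12$ copy of the last step of the proof of Theorem~\ref{theorem >1}. Finally, for $m=n$ I would argue as in~\ref{n=m,>1}: choose a discrete parameter $\psi_{>\hskip-4pt>}$ whose natural order dominates $\psi_{2m,2m}$, and check that $\Jac_{(2m+1)/2}$ (resp.\ $\Jac_{-(2m+1)/2}$) carries the appropriate member of $\Pi_{\psi_{>\hskip-4pt>}}$ to $\pi_{m,m}^{+}$ (resp.\ $\pi_{m,m}^{-}$); the Frobenius-reciprocity input is the embedding $\pi_{m,m+1}^{\pm}\h\nu^{(2m+1)/2}\rho\rtimes\pi_{m,m}^{\pm}$, itself a unique-subrepresentation computation of the same shape as before, which simultaneously yields $\pi_{m,m}^{\pm}\in\Pi_{\psi_{2m,2m}}$ and completes claim~(1).

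The main obstacle is, as in Section~\ref{sec: >1}, the combinatorial bookkeeping of the component-group characters along the chain of reductions: at every stage one must verify that the $\e^{\pm}$ being tracked stays cuspidal on the relevant subset of $\Jord_\rho$ (so that~\ref{simple rs}, and not another branch of M\oe glin's algorithm, is the one that applies), that the exponent removed carries the sign dictated by $\delta_{a}$, and that the outcome matches $\mathrm{sign}(n-m)$ in parallel with Theorem~\ref{thm-0}. The diagonal case $m=n$ carries the additional subtlety flagged in the Remark following Proposition~\ref{tr 0 inv}: the member of $\Pi_{\psi_{2m,2m}}$ that M\oe glin's procedure outputs can depend on the admissible order one fixes on $\Jord_\rho(\psi_{2m,2m})$, so one must also check that the two natural orders give the same pair $\pi_{m,m}^{\pm}$ — which is why the $\Jac_{\pm(2m+1)/2}$ computation is carried out on both sides.
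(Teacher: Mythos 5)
Your overall strategy — transcribe the proof of Theorem~\ref{theorem >1} into the $\alpha=\tfrac12$ setting, using the bookkeeping of $a_{\rho,\cdot,\cdot}$, $b_{\rho,\cdot,\cdot}$, the reduction~\ref{simple rs}, the Aubert identity~\eqref{inv-formula-el} with Proposition~\ref{prop inv 1/2}, and a dominating discrete parameter for $m=n$ — is exactly what the paper intends here, since it explicitly omits the proof in Section~\ref{sec: 1/2} and says it follows the pattern of Section~\ref{sec: >1}. Your observations about the setup ($\Jord_\rho(\phi_\sigma)=\emptyset$, $a_{\rho,\psi_\sigma,\e_\sigma}=\infty$, typically $b_{\rho,\cdot,\cdot}=0$ once the $E^\rho$-blocks are adjoined, and the $m=n$ non-discreteness subtlety) are correct.

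However, your edge-family step contains a concrete error. You claim that the packet obtained by adjoining only $E^\rho_{2n,1}$ contains both $\delta([\tfrac12,\tfrac{2n-1}2];\sigma)$ and $\delta([-\tfrac12,\tfrac{2n-1}2]_-;\sigma)$, read as the $m=0$ specializations of $\pi_{m,n}^\pm$. But these representations live on classical groups of different ranks: the first involves a segment of length $n$, the second a segment of length $n+1$, so they cannot belong to a single A-packet. The ranks of $\pi_{m,n}^+$ and $\pi_{m,n}^-$ agree only for $m\ge 1$: the segment $[\tfrac32,\tfrac{2m-1}2]$ contributes $m-1$ exponents, which exactly offsets the extra entry in $[-\tfrac12,\tfrac{2n-1}2]$; at $m=0$ both $L$-segments $[\tfrac12,-\tfrac12]$ and $[\tfrac32,-\tfrac12]$ degenerate to the empty set and the compensation fails. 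Similarly, the $n=0$ companion you invoke for the other edge is not well-defined, since $\delta([-\tfrac12,-\tfrac12]_-;\sigma)$ does not fit the notation of~\ref{simple SIR} (one needs $x\le y$ with $x-\alpha,y-\alpha\in\Z_{\ge0}$). The restriction $m,n\ge 1$ in the theorem is exactly the range in which both $\pi_{m,n}^\pm$ exist and are of equal rank; the induction base should therefore be the single one-parameter family $\delta([\tfrac12,\tfrac{2n-1}2];\sigma)\in\Pi_{\psi_\sigma\oplus E^\rho_{2n,1}}$ (no companion), with the $-$ family entering the argument only after the second block $E^\rho_{1,2m}$ is adjoined with $m\ge 1$. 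After this correction the rest of your scheme (the $m<n$, $n<m$, and $m=n$ steps) would go through.
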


\section{Case of reducibility at \texorpdfstring{$1$}{}}
\label{sec: 1}

Again in this section $\rho$, $\sigma $ and $\alpha$ are as in \ref{rho and sigma}, and we assume   that  
$
\alpha=1.
$
Denote by $\psi_\sigma$ the tempered elementary discrete parameter such that $\sigma\in \Pi_{\psi_\sigma}$. 
Applying
Proposition 6.0.3 of \cite{MR2767522} 
to $\psi_\sigma\oplus E_{2n+1,1}^\rho\oplus E_{1,2m+1}^\rho$ we get that  
for $m,n\geq 1$,
$$
L([1,m]\tr;\tau([0]_\pm;\delta([1,n];\sigma)))
\in \Pi_{\psi_\sigma\oplus E_{2n+1,1}^\rho
\oplus E_{1,2m+1}^\rho}.
$$
Before we give more information about these packets, we calculate the Aubert involutions of the above representations.

\subsection{Involution} We start with the following

\begin{lemma} For $n\geq 1$ we have
\begin{equation}
\label{lemma =1}
\tau([0]_{x};\delta([1,n];\sigma))\tr=
\begin{cases}
L([1,n]\tr;[0]\rtimes\sigma)
, & x=+,
\\
L([0,1],[2,n]\tr;\sigma)
, & x=-.
\end{cases}
\end{equation}
The above representations are unitarizable.
\end{lemma}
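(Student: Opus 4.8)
\emph{The plan} is to follow the pattern of the proof of Proposition \ref{tr >1 inv}: argue by induction on $n$, treating $x=+$ and $x=-$ in parallel, and dispose of the base case $n=1$ by hand. The two recurring tools will be the compatibility of $\tr$ with Jacquet modules (recalled in the Duality subsection and in (iii) of Remark \ref{rem jantzen}) and multiplicity-one computations in full Jacquet modules using the twisted Hopf formula \eqref{twisted hopf} together with \eqref{M-seg} and \eqref{M*segment-Z}.

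\textbf{Base case $n=1$.} Here $\delta([1,1];\sigma)=\delta([\nu\rho];\sigma)$ is the rank-one generalized Steinberg, and $\rho\rtimes\delta([\nu\rho];\sigma)=\tau([0]_+;\delta([\nu\rho];\sigma))\oplus\tau([0]_-;\delta([\nu\rho];\sigma))$. Applying $\tr$, which commutes with $\rtimes$ up to sign, and using $\rho\tr=\rho$ together with the well-known identity $\delta([\nu\rho];\sigma)\tr=L([1]\tr;\sigma)$, one obtains that the two irreducible constituents of $\rho\rtimes L([1]\tr;\sigma)$ are exactly $\tau([0]_+;\delta([\nu\rho];\sigma))\tr$ and $\tau([0]_-;\delta([\nu\rho];\sigma))\tr$. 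Since $\nu\rho\rtimes\sigma$ reduces, the reducibility criterion recalled in the Duality subsection gives that $\delta([0,1])\rtimes\sigma$ reduces; comparing cuspidal supports then identifies the two constituents of $\rho\rtimes L([1]\tr;\sigma)$ as $L([1]\tr;[0]\rtimes\sigma)$ and $L([0,1];\sigma)=L([0,1],[2,1]\tr;\sigma)$. To decide which is which, recall that $\tau([0]_+;\delta([\nu\rho];\sigma))$ is characterised by $\delta([0,1])\otimes\sigma$ lying in its Jacquet module; by the Jacquet-module compatibility of $\tr$ the image of this term lies in the Jacquet module of $\tau([0]_+;\delta([\nu\rho];\sigma))\tr$, which forces $\tau([0]_+;\delta([\nu\rho];\sigma))\tr=L([1]\tr;[0]\rtimes\sigma)$ and hence $\tau([0]_-;\delta([\nu\rho];\sigma))\tr=L([0,1];\sigma)$.

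\textbf{Inductive step ($n\geq 2$).} Assume the claim for $n-1\geq 1$. From $\delta([1,n];\sigma)\h\nu^n\rho\rtimes\delta([1,n-1];\sigma)$, inducing by $\rho$ and using commutativity of $\times$, the two summands of $\rho\rtimes\delta([1,n];\sigma)$ embed into $\nu^n\rho\rtimes\big(\tau([0]_+;\delta([1,n-1];\sigma))\oplus\tau([0]_-;\delta([1,n-1];\sigma))\big)$; checking which summand via the defining Jacquet-module property, and then a multiplicity-one computation for the term $[n]\otimes\tau([0]_x;\delta([1,n-1];\sigma))$ in the full Jacquet module (using \eqref{twisted hopf}, \eqref{M-seg} and \eqref{M*segment-Z}), shows that $\tau([0]_x;\delta([1,n];\sigma))$ is the unique irreducible subrepresentation of $\nu^n\rho\rtimes\tau([0]_x;\delta([1,n-1];\sigma))$ and that $\mu^*_{\{[n]\}}(\tau([0]_x;\delta([1,n];\sigma)))=[n]\otimes\tau([0]_x;\delta([1,n-1];\sigma))$, in the notation of Definition \ref{Janzen def}. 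Dualising as in (iii) of Remark \ref{rem jantzen} and invoking the inductive hypothesis, $\tau([0]_x;\delta([1,n];\sigma))\tr$ is the unique irreducible subrepresentation of $\nu^{-n}\rho\rtimes L([1,n-1]\tr;[0]\rtimes\sigma)$ when $x=+$, and of $\nu^{-n}\rho\rtimes L([0,1],[2,n-1]\tr;\sigma)$ when $x=-$; a Langlands-classification argument (the exponent $-n$ being strictly more negative than all others present) then identifies these with $L([1,n]\tr;[0]\rtimes\sigma)$ and $L([0,1],[2,n]\tr;\sigma)$ respectively, which closes the induction.

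\textbf{Unitarizability and the main difficulty.} The representations $\tau([0]_x;\delta([1,n];\sigma))$ are tempered, hence unitarizable, and their Aubert duals are unitarizable because they lie in A-packets, as established in Theorem \ref{thm-1} (A-packets consist of unitarizable representations, \S\ref{AP}). The delicate point will be keeping the cases $x=+$ and $x=-$ separated throughout: their Langlands data are structurally different (one-point segments over the tempered $[0]\rtimes\sigma$ versus a genuine length-two segment $[0,1]$ over $\sigma$), so the matching in the base case must be pinned down correctly, and in the inductive step the multiplicity-one computation at the exponent $[n]$ must be run with care — in particular one must check that the segment $[1,n]$ inside $\delta([1,n];\sigma)$ does not produce a second occurrence of $[n]$ to the left of $\otimes$.
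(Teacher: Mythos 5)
Your argument is correct and is exactly the approach the paper intends: the proofs in the reducibility-$1$ section are explicitly omitted as following the same induction/Jantzen-lemma/multiplicity-one technique as Proposition \ref{tr >1 inv}, and that is precisely what you carry out (base case pinned down via the defining Jacquet-module term $\delta([0,n])\otimes\sigma$ of $\tau([0]_+;\cdot)$ and its image under the duality bijection, inductive step via $\mu^*_{\{[n]\}}(\tau([0]_x;\delta([1,n];\sigma)))=[n]\otimes\tau([0]_x;\delta([1,n-1];\sigma))$ and Remark \ref{rem jantzen}(iii)). One small repair: for the unitarizability of $L([1,n]\tr;[0]\rtimes\sigma)$ and $L([0,1],[2,n]\tr;\sigma)$ you should invoke the fact that cotempered representations lie in A-packets (item (3) of \ref{ECS}), not Theorem \ref{thm-1}, whose representations $L([1,m]\tr;\tau([0]_\pm;\delta([1,n];\sigma)))$ require $m\geq 1$ and are not the ones occurring in the lemma.
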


Now we have the following 

\begin{proposition}
\label{prop inv 1}
 Let $m,n\geq 1$. Denote
$$
\pi_{m,n}^\pm:=L([1,m]\tr;\tau([0]_\pm;\delta([1,n];\sigma))
,
\quad
\tau_{m,n}^-:= L([2,m]\tr;\delta([-1,n]_-;\sigma)).
$$
Then
\begin{equation}
\label{inv 1}
(\pi_{m,n}^+)\tr=\pi_{n,m}^+,  \qquad (\pi_{m,n}^-)\tr=\tau_{n,m}^- .
\end{equation}
\end{proposition}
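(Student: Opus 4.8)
The plan is to follow the scheme of the proof of Proposition~\ref{tr >1 inv} (which rests on the idea of~\cite{MR3868005}): fix one of the two indices, induct on the other, at each stage exhibit an embedding $\pi\h[c]\rtimes\pi'$ whose target has $\pi$ as its unique irreducible subrepresentation, read off $\mu^*_{\{[c]\}}(\pi)=[c]\otimes\pi'$, and then transport this across the Aubert involution by part~(iii) of Remark~\ref{rem jantzen} (legitimately, since $[c]=\nu^{c}\rho$ with $c\ne0$ is not self-dual), obtaining $\pi\tr\h[-c]\rtimes(\pi')\tr$ as the unique irreducible subrepresentation. The inputs peculiar to reducibility~$1$ are the preceding Lemma (formula~\eqref{lemma =1}), which furnishes the base of the induction, and the one-step embeddings $\delta([1,n];\sigma)\h[n]\rtimes\delta([1,n-1];\sigma)$, $\delta([-1,n]_-;\sigma)\h[n]\rtimes\delta([-1,n-1]_-;\sigma)$ and $\tau([0]_\pm;\delta([1,n];\sigma))\h[0]\rtimes\delta([1,n];\sigma)$ from~\ref{simple SIR}.

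First I would settle $m=0$. Reading the empty segments $[1,0]\tr$, $[2,1]\tr$ as $L(\emptyset)$ and $\delta([1,0];\sigma)$ as $\sigma$, one has $\pi_{0,n}^{\pm}=\tau([0]_\pm;\delta([1,n];\sigma))$, $\pi_{n,0}^{+}=L([1,n]\tr;[0]\rtimes\sigma)$ and $\tau_{n,0}^{-}=L([0,1],[2,n]\tr;\sigma)$, so formula~\eqref{lemma =1} of the preceding Lemma is precisely $(\pi_{0,n}^{+})\tr=\pi_{n,0}^{+}$ and $(\pi_{0,n}^{-})\tr=\tau_{n,0}^{-}$; applying $\tr$ gives in addition the ``second index~$0$'' cases $(\pi_{m,0}^{+})\tr=\pi_{0,m}^{+}$ and $(\tau_{m,0}^{-})\tr=\pi_{0,m}^{-}$.

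I would then induct on $n$, establishing for $m\ge n$ the two identities $(\pi_{m,n}^{+})\tr=\pi_{n,m}^{+}$ and $(\tau_{m,n}^{-})\tr=\pi_{n,m}^{-}$; the complementary range $m<n$ follows by applying $\tr$ and relabelling, and the claim $(\pi_{m,n}^{-})\tr=\tau_{n,m}^{-}$ is, after applying $\tr$ and relabelling, the same as the second identity. I prefer to induct with $\tau^{-}$ rather than directly with $\pi^{-}$, because the tempered piece of $\tau_{m,n}^{-}$ is the genuinely square-integrable $\delta([-1,n]_-;\sigma)$ (for $n\ge2$), for which the one-step embedding above is clean, whereas a direct induction on $\pi_{m,n}^{\pm}$ would first require deciding which of the two summands of $[0]\rtimes\delta([1,n];\sigma)$ is hit, via the Jacquet-module characterisation in~\ref{simple SIR}. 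The base $n=0$ is the previous paragraph. For the step $n\to n+1$ with $n+1\le m$, just as in~\ref{-1 leq n<m}, I would combine the one-step embeddings with the commutations $[n+1]\times L([-m,-1]\tr)\cong L([-m,-1]\tr)\times[n+1]$ (and $[n+1]\times L([-m,-2]\tr)\cong L([-m,-2]\tr)\times[n+1]$), valid because $n+1\ge2$ is not adjacent to the negative exponents present, to embed both $\pi_{m,n+1}^{+}$ and $[n+1]\rtimes\pi_{m,n}^{+}$ (respectively both $\tau_{m,n+1}^{-}$ and $[n+1]\rtimes\tau_{m,n}^{-}$) into one and the same induced representation $\Gamma$. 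A multiplicity-one count of the relevant term $\gamma$ in $\mu^*(\Gamma)$, via~\eqref{twisted hopf}, \eqref{M*segment-Z}, \eqref{M*segment-Z-GL} and the known shape of $\mu^*$ of the tempered piece, shows $\Gamma$ has a unique irreducible subrepresentation, hence $\pi_{m,n+1}^{+}\h[n+1]\rtimes\pi_{m,n}^{+}$ with $\mu^*_{\{[n+1]\}}(\pi_{m,n+1}^{+})=[n+1]\otimes\pi_{m,n}^{+}$, and then Remark~\ref{rem jantzen}(iii) together with the uniqueness in Jantzen's lemma forces $(\pi_{m,n+1}^{+})\tr=\pi_{n+1,m}^{+}$ (and likewise $(\tau_{m,n+1}^{-})\tr=\pi_{n+1,m}^{-}$).

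The main obstacle, exactly as in Proposition~\ref{tr >1 inv}, is that multiplicity-one step, and within it the boundary subcase $n+1=m$: there $n+1$ is also the top exponent of the segment underlying $L([-m,-1]\tr)$ (respectively $L([-m,-2]\tr)$), so the $[n+1]$ on the left of $\otimes$ could a priori arise from the $M^*$ of that segment rather than from $M^*([n+1])$; this is ruled out by the cuspidal-support bookkeeping of~\ref{-1 leq n<m}, observing via~\eqref{M*segment-Z} that such a contribution would force the whole segment to the left of $\otimes$, which is incompatible with the support of $\gamma$. A second, milder point peculiar to reducibility~$1$ is the bottom layers $n=1$ (and $n+1=1$ in the $\tau^-$-induction), where $\delta([-1,1]_-;\sigma)$ is tempered but not square integrable and $\tau([0]_\pm;\delta([1,1];\sigma))$ sits over the generalised Steinberg $\delta([1];\sigma)$; here one checks directly, using the explicit Jacquet modules and the characterisation of $\tau([0]_\pm;\cdot)$ recalled in~\ref{simple SIR}, that the same embeddings and multiplicity-one statements persist. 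Apart from these verifications the argument is formally identical to that of Section~\ref{sec: >1}, which is presumably why the author omits it.
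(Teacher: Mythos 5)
Your overall strategy is the one the paper intends: the author explicitly omits the proofs in Section \ref{sec: 1}, stating that they follow the ideas of Section \ref{sec: >1}, and your plan (one-step embeddings, uniqueness of the irreducible subrepresentation via a multiplicity-one count in $\mu^*(\Gamma)$, computation of $\mu^*_{\{[c]\}}$, transport through the involution by (iii) of Remark \ref{rem jantzen}, with formula \eqref{lemma =1} as the base of the induction) is a faithful transcription of the proof of Proposition \ref{tr >1 inv}. The delicate points you single out --- the boundary subcase $n+1=m$ of the multiplicity count and the low layers where $\delta([-1,1]_-;\sigma)$ is tempered but not square integrable --- are exactly the places where verification is needed, and they do not look like obstructions.

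There is, however, a genuine gap in your reduction of the index range for the minus sign. The identity $(\pi_{m,n}^{+})\tr=\pi_{n,m}^{+}$ is symmetric under applying $\tr$ and swapping $(m,n)$, so proving it for $m\ge n$ does give all pairs. The minus identity is not: your induction yields $(\tau_{m,n}^{-})\tr=\pi_{n,m}^{-}$ only for $m\ge n$, and applying $\tr$ to this gives $(\pi_{a,b}^{-})\tr=\tau_{b,a}^{-}$ only for $a\le b$. The range $m>n$ of the asserted formula $(\pi_{m,n}^{-})\tr=\tau_{n,m}^{-}$ --- equivalently $(\tau_{a,b}^{-})\tr=\pi_{b,a}^{-}$ for $a<b$ --- is never established, and it cannot be recovered from the proven half by ``applying $\tr$ and relabelling,'' precisely because $\pi^{-}$ and $\tau^{-}$ are two distinct families exchanged by $\tr$ rather than one family carried to itself. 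To close this you need a second induction covering the case where the first index is strictly smaller than the second: either grow the cotempered segment (embeddings of the form $\pi_{m+1,n}^{-}\h[-(m+1)]\rtimes\pi_{m,n}^{-}$, in the spirit of \ref{rev n=0} and \ref{-1 leq n<m}), or run the induction directly on $\pi_{m,n}^{-}$ for $m\ge n$ --- which forces you to confront the summand-identification issue for $[0]\rtimes\delta([1,n];\sigma)$ that you chose to sidestep, an issue you must in any case resolve for the $\pi^{+}$ induction, so nothing is really saved by avoiding it.
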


\subsection{Definition of $(\psi,\e^\pm)_{k,l}$ and $\e^{+,-,-}_{k,l}$ in the case $\alpha=1$}
Denote in the rest of this section 
$$
\psi_{k,l}:= 
\psi
\oplus
 E_{ k,1}^\rho\oplus E_{1, l}^\rho,
$$
where $k,l\geq 0$ will be always chosen to be of odd parity.
Set
$$
\xi=\e_\sigma(\rho,1,1).
$$
Next we define  characters $\e_{k,l}^\pm$  of the component group of $\psi_{k,l}^\pm$ when $k$ and $l$ are different odd integers $>1$. They coincide with $\psi_\sigma$ on 
$\Jord(\psi_\sigma)- ((\rho,1,1))$ and satisfy
\begin{gather*}
\e_{k,l}^\pm(\rho,1,1)=\e_{k,l}^\pm(\rho,\min(k,l),\delta_{\min(k,l)})=\pm\xi, 
\\
\e_{k,l}^\pm(\rho,\max(k,l),\delta_{\min(k,l)})=\xi
\end{gather*}
(we need to assume that $\e^\pm_{k,l}$ is equal on the pair of blocks for which $E_{k',l'}^\rho=E_{k'',l''}^\rho$).
As before,  we denote such a pair $(\psi_{k,l},\e_{k,l}^\pm)$ by 
$
(\psi,\e^\pm)_{k,l}.
$
Denote
$$
\e^{+,-,-}_{k,l}
$$
\begin{gather*}
\e^{+,-,-}_{k,l}(\rho,1,1)=\xi,
\\
\e^{+,-,-}_{k,l}(\rho,\min(k,l),\delta_{\min(k,l)})=
\e^{+,-,-}_{k,l}(\rho,\max(k,l),\delta_{\min(k,l)})=-\xi.
\end{gather*}

\subsection{On corresponding A-packets}
With the above notation we have the following

\begin{theorem}
\label{thm-1} Let  $m,n\ge1$. Then the following holds:

\begin{enumerate}

\item
\label{item packet 1}
$$
L([1,m]\tr;\tau([0]_\pm;\delta([1,n];\sigma))), 
L([2,m]\tr;\delta([-1,n]_-;\sigma))
\in 
\Pi_{\psi_\sigma\oplus E_{2n+1,1}^\rho
\oplus E_{1,2m+1}^\rho}.
$$
 In other words, $\pi_{m,n}^\pm,\tau_{m,n}^- \in \Pi_{\psi_{2n+1,2m+1}}.$ In particular, representations $\pi_{m,n}^\pm$ and $\tau_{m,n}^-$ are unitarizable.

\item 
\begin{equation}
\label{formula 1 +}
\pi((\psi,\e^+)_{2n+1,2m+1})=
\begin{cases}
L([1,m]\tr;\tau([0]_{-};\delta([1,n];\sigma))) =\pi_{m,n}^-,&  m< n,
\\
L([2,m]\tr;\delta([-1,n]_-;\sigma)) =\tau_{m,n}^-, &  n<m.
\end{cases}
\end{equation}

\item 
\begin{equation}
\label{formula 1 -}
\pi((\psi,\e^-)_{2n+1,2m+1})=
L([1,m]\tr;\tau([0]_{+};\delta([1,n];\sigma))) =\pi_{m,n}^+ , \ \ \ 
  m\ne n.
\end{equation}

\item 
\begin{equation}
\label{formula 1 +--}
\pi((\psi,\e^{+--})_{2n+1,2m+1})=
\begin{cases}
L([2,m]\tr;\delta([-1,n]_-;\sigma)) =\tau_{m,n}^-,
&  m< n,
\\
L([1,m]\tr;\tau([0]_{-};\delta([1,n];\sigma))) =\pi_{m,n}^-, &  n<m.
\end{cases}
\end{equation}

\end{enumerate}
\end{theorem}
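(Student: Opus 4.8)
The plan is to imitate, essentially step for step, the proof of Theorem \ref{theorem >1}. The case $\alpha=1$ runs on exactly the same machinery — M\oe glin's simple reduction step \ref{simple rs}, domination of a non‑discrete parameter by a discrete one together with the operators $\Jac$, and the Aubert‑involution identity \eqref{inv-formula-el} — so only the combinatorics of the characters of the component group must be redone. Throughout one works with the three $\rho$‑blocks of $\psi_{k,l}=\psi_\sigma\oplus E_{k,1}^\rho\oplus E_{1,l}^\rho$, namely $(\rho,1,1)$, $(\rho,k,1)$ and $(\rho,1,l)$; here $\alpha=1$ forces $(\rho,1,1)\in\Jord(\psi_\sigma)$, and $k,l$ are odd.

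I would begin with the identifications (2)–(4) in the range $m\ne n$. For $m\ne n$ the parameter $\psi_{2n+1,2m+1}$ is elementary and has discrete diagonal restriction: the segments $[B,A]$ attached to the three $\rho$‑blocks are the singletons $\{0\}$, $\{n\}$, $\{m\}$, which are pairwise disjoint. Hence each $\pi((\psi,\e)_{2n+1,2m+1})$ is irreducible and is computed by iterating \ref{simple rs}: for each of the characters $\e^+$, $\e^-$, $\e^{+,-,-}$, and in each of the regimes $m<n$, $n<m$, one reads off $b_{\rho,\psi,\e}$, $a_{\rho,\psi,\e}$ and $\delta_{a_{\rho,\psi,\e}}$ from the prescribed signs of $\e$ on the three blocks, peels off the cuspidal factors $\nu^{\pm c}\rho$ one at a time, and runs a downward induction on $\max(2n+1,2m+1)$ — mirroring the inductive schemes of \ref{-2 leq m<n}, \ref{rev n=0} and \ref{-1 leq n<m} — to identify the unique irreducible subrepresentation obtained with the claimed Langlands quotient. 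This uses the embeddings of \ref{simple SIR} (for the generalised Steinberg $\delta([1,n];\sigma)$, for $\delta([-1,n]_\pm;\sigma)$ and for $\tau([0]_\pm;\delta([1,n];\sigma))$), together with the multiplicity‑one Jacquet‑module computations carried out via \eqref{twisted hopf}, \eqref{M*segment-Z}, \eqref{M*segment-Z-GL} word for word as in Section \ref{sec: >1}; the bases of the inductions are the tempered cases, which are precisely the descriptions in \ref{simple SIR} of $\tau([0]_\pm;\delta([1,n];\sigma))$ inside $\rho\rtimes\delta([1,n];\sigma)$ and of $\delta([-1,n]_\pm;\sigma)$ inside $\delta([-1,n])\rtimes\sigma$. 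Finally, the equality of the $m<n$ and $n<m$ halves of each of \eqref{formula 1 +}, \eqref{formula 1 -} and \eqref{formula 1 +--} is forced, as in the last subsection of the proof of Theorem \ref{theorem >1}, by combining \eqref{inv-formula-el} with the Aubert‑involution identities of Proposition \ref{prop inv 1} and of the lemma containing \eqref{lemma =1}; here one transposes the part of $\psi_\sigma$ away from $(\rho,1,1)$ — which is itself $\tr$‑fixed — so that the resulting transposed parameter again lies in the family $\psi_{k',l'}$.

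For the membership statement (1): that $\pi_{m,n}^\pm\in\Pi_{\psi_{2n+1,2m+1}}$ is recorded above, from \cite[Proposition 6.0.3]{MR2767522} applied to $\psi_\sigma\oplus E_{2n+1,1}^\rho\oplus E_{1,2m+1}^\rho$. For $\tau_{m,n}^-$ with $m\ne n$ it is a byproduct of \eqref{formula 1 +}--\eqref{formula 1 +--}, since $\pi(\psi,\e)\subseteq\Pi_\psi$; and for $m=n$ one gets $\tau_{n,n}^-\in\Pi_{\psi_{2n+1,2n+1}}$ by dominating $\psi_{2n+1,2n+1}$ with the discrete parameter $\psi_{2n+3,2n+1}$ and applying $\Jac_{n+1}$, exactly as in the subsection ``Case $m=n\ge0$'' of the proof of Theorem \ref{theorem >1}. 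The inputs $(\pi_{m,n}^+)\tr=\pi_{n,m}^+$ and $(\pi_{m,n}^-)\tr=\tau_{n,m}^-$ of Proposition \ref{prop inv 1} are themselves obtained by the method of Proposition \ref{tr >1 inv}, i.e. by locating a one‑dimensional Jacquet‑module constituent and invoking (iii) of Remark \ref{rem jantzen}.

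The hard part is purely combinatorial, and is the one genuinely new feature relative to $\alpha>1$. Since at $\alpha=1$ the block $(\rho,1,1)$ is permanently present in $\psi_\sigma$, the relevant portion of $\Jord_\rho(\psi_{k,l})$ consists of the three blocks $(\rho,1,1)$, $(\rho,2n+1,1)$, $(\rho,1,2m+1)$, and whether a character $\e$ is ``cuspidal'' on an initial segment of $\Jord_\rho$ now depends on $\xi=\e_\sigma(\rho,1,1)$ and on how $\xi$ compares with the signs $\e$ assigns to the other two blocks. This is exactly why three characters occur — $\e^+$, $\e^-$ and the mixed $\e^{+,-,-}$ — rather than the two that suffice in Theorem \ref{theorem >1}, and the delicate point in each of the six cases is the correct evaluation of $b_{\rho,\psi,\e}$ and $a_{\rho,\psi,\e}$: in particular, recognising when one is in the boundary case $a_{\rho,\psi,\e}=b_{\rho,\psi,\e}+2$, where the first reduction step already outputs a tempered (non‑square‑integrable) constituent $\tau([0]_\pm;\delta([1,n];\sigma))$, or else the representation $\delta([-1,n]_-;\sigma)$, instead of a strongly positive square‑integrable one. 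No representation‑theoretic ingredient beyond what Section \ref{sec: >1} already provides is needed.
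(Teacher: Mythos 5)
Your proposal is correct and follows exactly the route the paper intends: the paper explicitly omits the proofs in Section \ref{sec: 1}, stating they are obtained by the same ideas and techniques as the proofs in Section \ref{sec: >1}, and your argument is precisely that adaptation (membership via \cite[Proposition 6.0.3]{MR2767522} and domination for $m=n$, identification of $\pi((\psi,\e)_{2n+1,2m+1})$ via the simple reduction step \ref{simple rs} with the character combinatorics around the extra block $(\rho,1,1)$, and the transfer between the $m<n$ and $n<m$ regimes via \eqref{inv-formula-el} and Proposition \ref{prop inv 1}). No further comment is needed.
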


\section{On irreducible unitarizable subquotients at critical points}
\label{sec: CR<4}

\begin{definition}
\label{def-critical}
Let $\rho_1,\dots,\rho_k\in\Cusp$ and let  $\sigma$ be an irreducible cuspidal representation of a classical group.
Assume that for any  $i$ we have 
\begin{enumerate}

\item
$\rho_i^u\cong (\rho_i^u)\check{\ }$; 

\item
\label{Z-segment}
 the set
$
\{e(\rho_j):\rho_j^u\cong \rho_i^u\}
$
is a $\Z$-segment in $\frac12\Z$ (possibly with multiplicities); 

\item
 the $\Z$-segment in (\ref{Z-segment}) contains the reducibility exponent $\alpha_{ \rho_i^u,\sigma}$.
 
 \end{enumerate}
Then, we say that the representation   $\rho_1\times\dots\times \rho_k\rtimes\sigma$ is of critical type. If additionally   $\pi$ is an irreducible subquotient of $\rho_1\times\dots\times \rho_k\rtimes\sigma$, then we also say  that $\pi$ is of critical type.
\end{definition}

The aim of this section is to prove the following

\begin{theorem} 
\label{cr leq 3}
Let $\pi$ be an irreducible unitarizable subquotient of a representation
$$
\rho_1\times\dots\times \rho_k\rtimes\sigma,\qquad k\leq3
$$
of critical type. Then $\pi$ is contained in an A-packet.
\end{theorem}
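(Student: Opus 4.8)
The strategy is a finite, case-by-case verification organised by the reducibility exponent $\alpha=\alpha_{\rho^u,\sigma}$ and by the corank $k\leq 3$, reducing in each case to the constructions of the previous sections. First I would invoke the classification of critical-type representations in corank $\leq 3$ from \cite{MR-T-CR3}: since $\pi$ is of critical type, the cuspidal support is concentrated on a single $\Z$-segment (or a bounded union of $\Z$-segments) in $\tfrac12\Z$ containing $\alpha$, so only finitely many shapes of $\pi$ can occur; and we only need to treat those $\pi$ that \cite{MR-T-CR3} already shows are unitarizable (the roughly $100$ types). The point of the theorem is to re-derive the list of unitarizable ones as exactly those that land in an A-packet, so the work is: go through the $\cite{MR-T-CR3}$ list of unitarizable critical-type representations and, for each, exhibit an explicit A-parameter $\psi$ with $\pi\in\Pi_\psi$.

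Second, the bulk of the work is bookkeeping against Theorems \ref{theorem >1}, \ref{thm-0}, \ref{thm-1/2}, \ref{thm-1} and their corollaries. For $\alpha>1$ the relevant unitarizable subquotients at the critical point are (Aubert-conjugates and degenerations of) the two-parameter family $\pi_{m,n}=L([\alpha-1,\alpha+m]\tr;\delta([\alpha,\alpha+n];\sigma))$ together with the boundary cases in Corollary \ref{cor-ends} (the generalised Steinberg $\delta([\alpha,\alpha+n];\sigma)$, its dual, the ends of complementary series $L([\alpha-1];\delta([\alpha,\alpha+n];\sigma))$, $L([\alpha-1,\alpha+m]\tr;\sigma)$, and the genuinely new intermediate complementary-series members handled in the Appendix \ref{sec: intermediate}); Theorem \ref{theorem >1}\eqref{packet a > 3/2} and Corollary \ref{cor-ends} place all of these in explicit A-packets $\Pi_{\psi_{k,l}}$. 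For $\alpha=1,\tfrac12,0$ the analogous lists are covered by Theorems \ref{thm-1}, \ref{thm-1/2}, \ref{thm-0}, which again assert membership in the displayed A-packets; one also uses the tempered and cotempered cases (tempered $\pi$ are in the A-packet of their $L$-parameter tensored with $E_1$, cotempered ones via \eqref{inv-formula-el}, and Aubert duals of A-packet members are again in A-packets by the compatibility of $\tr$ with the constructions). The residual task is to check that the union of these explicitly-constructed packet members, intersected with corank $\leq 3$, really exhausts the \cite{MR-T-CR3} unitarizable list — i.e., that no unitarizable critical-type representation of corank $\leq 3$ has been missed.

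Third, one must also verify the trivial direction is \emph{not} what is being claimed here (membership in an A-packet always implies unitarizability, as noted in the introduction), so the content is genuinely the forward implication ``unitarizable $\Rightarrow$ in an A-packet''. To organise the finite check cleanly I would tabulate, separately for each $\alpha\in\{0,\tfrac12,1\}$ and for $\alpha>1$, the corank-$1$, corank-$2$, corank-$3$ unitarizable critical-type representations from \cite{MR-T-CR3}, and match each against the representations appearing on the left-hand sides of the statements in Sections \ref{sec: >1}--\ref{sec: 1} (and Corollary \ref{cor-ends}, and Appendix \ref{sec: intermediate}); for the few sporadic entries not of the two-parameter-family shape, I would argue membership directly via M\oe glin's reduction step \ref{simple rs} applied to a suitable elementary parameter, or via Aubert duality from a case already handled.

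\textbf{Main obstacle.} The hard part is not any single construction — those are supplied by the earlier sections — but the completeness of the matching: confirming that every one of the $\approx 100$ unitarizable types listed in \cite{MR-T-CR3} at a critical point in corank $\leq 3$ actually appears among the representations we have placed in A-packets, with no exception. This requires carefully re-reading the \cite{MR-T-CR3} classification and accounting for all the low-index degenerate cases (small $m,n$), the $\pm$ choices in the $\alpha\in\{0,\tfrac12,1\}$ cases, the tempered/cotempered members, and the intermediate complementary-series representations of Appendix \ref{sec: intermediate}; a secondary subtlety is that different admissible orders on $\Jord_\rho(\psi)$ can produce different packets (as flagged in the Remark after Theorem \ref{thm-0}), so for the borderline $m=n$ cases one must make sure an order exists for which the target representation does occur.
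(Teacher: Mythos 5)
Your overall strategy — a finite, case-by-case check against the \cite{MR-T-CR3} classification of unitarizable critical-type subquotients in corank $\leq 3$, split by the value of $\alpha$ and using tempered/cotempered membership and Aubert duality to dispose of the easy members — is the same as the paper's. However, your identification of the working tools is off in a way that would stall execution. Most of the cases in corank $2$ and $3$ involve parameters $m,n$ too small to fall under the hypotheses of Theorems \ref{theorem >1}, \ref{thm-0}, \ref{thm-1/2}, \ref{thm-1} (which typically require $m,n\geq 0$ or $\geq 1$ and produce specific two-parameter families), and Corollary \ref{cor-ends} covers only a few boundary shapes; so "matching against the left-hand sides of Sections \ref{sec: >1}--\ref{sec: 1}" disposes of only a handful of entries, not "the bulk of the work." In the paper's actual proof, those theorems appear essentially once (Theorem \ref{thm-1/2} for $(\tfrac12,\tfrac12,\tfrac32)$ at $\alpha=\tfrac12$, and Theorem \ref{theorem >1} implicitly for the M\oe glin representation $(\alpha-1,\alpha,\alpha)$, $\alpha>1$), and Appendix \ref{sec: intermediate} is not used at all.

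The workhorse you omit is the "$L$-packet inside A-packet" construction, \cite[Proposition 6.0.3]{MR2767522}, which the paper invokes repeatedly (for $(0,1)$ at $\alpha=0$, for $(0,1,1)$ at $\alpha=1$ and $\alpha=0$, for $(\tfrac12,\tfrac12,\tfrac32)$ and $(\tfrac12,\tfrac12,\tfrac12)$ at $\alpha=\tfrac12$, for $(0,0,1)$ at $\alpha=0$), together with \cite[Proposition 5.3]{MR2504024} for the cases where one induces from a smaller A-packet member by a unitary factor, and the dominance/$\Jac$ procedure of \cite[3.1.2]{MR2767522} (as in the computation showing $\pi_8$ at $(\tfrac12,\tfrac12,\tfrac32)$, $\alpha=\tfrac32$). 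These tools, not the two-parameter-family theorems, carry the majority of the cases; without naming them, your "direct arguments for a few sporadic entries" understates what is actually the main body of the proof. If you replace the reliance on Sections \ref{sec: >1}--\ref{sec: 1} with a systematic application of \cite[Proposition 6.0.3]{MR2767522}, M\oe glin's reduction step \ref{simple rs}, and the dominance/$\Jac$ procedure, your plan aligns with the paper.
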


\begin{proof} First we  recall   some simple general facts which will  considerably shorten the proof of the theorem.\subsubsection{Some simple remarks about A-packets}
\label{ECS}
\begin{enumerate}
\item Each irreducible tempered representation is an element of some A-packet (with tempered A-parameter).

\item If $\pi$ is an element of an elementary discrete A-packet, then $\pi\tr$ is also  an element of an elementary discrete A-packet.

\item Each irreducible cotempered representation is contained in an A-packet (with co\-tempered A-parameter).

\end{enumerate}
For coranks 0 and 1 the  theorem  follows  directly from remarks in \ref{ECS}. It remains to consider   coranks 2 and 3. 
We will consider below only the cases which are not covered by remark \ref{ECS}.   We will also prove the theorem in the case when all $\rho_i^u$ are the same, denoted by $\rho$ (the proof in the other case is very simple, and we omit it here). We fix an irreducible cuspidal representation $\sigma$ of a classical group. We assume that $\sigma=\pi(\psi_\sigma,\e_\sigma)$ for some $\psi_\sigma\in\Pe\cap \Pddr$. Denote $\alpha=\alpha_{\rho,\sigma}$ (as usual). If we have some $\psi\in\Pe$, and write $\jrp=((a_1,b_1),\dots,(a_k,b_k)) $, then we will always assume that the enumeration satisfies $\max(a_1,b_1)\leq \dots \leq \max(a_k,b_k)$.

Below we will consider  exponents $(x_1,\dots,x_k)$, $k=2$ or $3$,  the representation $\nu^{x_k}\rho\times\dots\times \nu^{x_1}\rho\rtimes\sigma$ of critical type, and irreducible unitarizable subquotients of it.
We will  give precise references about where these representations were considered in \cite{MR-T-CR3}, and denote their irreducible unitarizable subquotients in the same way as in  \cite{MR-T-CR3} (therefore, we will not recall here  this notation).

The arguments below are usually  simple (and we have already used them in the previous part of the paper). Therefore, we will  only sketch them very  briefly below.

When we  have a parameter $(\psi',\e')$ as below, and when we  get a new parameter $(\psi'',\e'')$ by replacing $\rabp\in \psi'$ by $\rabpp$, then we will always assume that $\e'\rab'=\e''\rabpp$ and that $\e'$ and $\e''$ coincide on remaining blocks. 

Also if we  get $(\psi'',\e'')$ from $(\psi',\e')$ by replacing some elements $\rab$ with $(\rho,b,a)$, then we will assume $\e''\rab=\e'(\rho,c,d)$ if $\max(a,b)=\max(c,d)$.

\subsection{Corank 2}

\subsubsection{Case $(\alpha-1,\alpha), \alpha>1$ {\rm (3.4.3 of \cite{MR-T-CR3})}} 
\label{a-1,a}
 Here all 4 irreducible subquotients are unitarizable. One is square integrable, and another is its Aubert involution. Therefore, we need to consider only representations 
$$
\pi_2:=L([\alpha-1];\delta([\alpha];\sigma)), \quad \pi_3:=L([\alpha-1],[\alpha];\sigma),
$$
where $\pi_2\tr=\pi_3$. Both above representations are contained in A-packets by \eqref{strange-c-s}  of Corollary \ref{cor-ends} (in the corollary, consider the case of  $n=0$ and $m=0$, respectively).

\subsubsection{Case $(0.1), \alpha=0$ {\rm (3.4.6 of \cite{MR-T-CR3})}} Here all 5 irreducible subquotients are unitarizable. Two of them are square integrable, and  another two are their Aubert duals.  Therefore we need to consider  only
$$
\pi_2:=L([0,1];\sigma).
$$
Let $\psi:=\psi_\sigma\oplus E_{2,2}^\rho$. Then $\pi_2\in \Pi_\psi$ by Proposition 6.0.3 of \cite{MR2767522} (construction "$L$-packet  inside A-packet").

\subsection{Corank 3}

\subsubsection{Case $(\alpha-1,\alpha,\alpha+1), \alpha>1$ {\rm (4.5 of \cite{MR-T-CR3})}}  Here we have 4 irreducible unitarizable  subquotients. One of them  is square integrable, and another is its Aubert involution. Therefore,  we need to consider  the following 
representations 
\begin{gather*}
\pi_3:=L([\alpha-1];\delta([\alpha,\alpha+1];\sigma)),
\quad
\pi_4:=L([\alpha+1],[\alpha],[\alpha-1];\sigma).
\end{gather*}
Both above representations are contained in A-packets by \eqref{strange-c-s}  of Corollary \ref{cor-ends} (in the corollary, consider the case of  $n=1$ and $m=1$, respectively).

\subsubsection{$(\alpha-1,\alpha,\alpha), \alpha>1$ {\rm (4.6 of \cite{MR-T-CR3})}}   Here only one irreducible subquotient is unitarizable:  
$$
\pi_0:=L([\alpha-1], [\alpha];\delta([\alpha];\sigma)).
$$
The above representation is contained in an A-packet by \eqref{packet a > 3/2}  of Theorem \ref{theorem >1} (in the theorem, consider the case of  $n=0$ and $m=0$; recall that C. M\oe glin has shown that this representation is in an A-packet in Appendix A of \cite{MR-T-CR3}).

\subsubsection{Case $(\tfrac12,\tfrac12,\tfrac32), \alpha=\tfrac32$ {\rm (4.7.2 of \cite{MR-T-CR3})}}    Here all 8 irreducible subquotients are unitarizable. Two of them are tempered, while another two are cotempered. Therefore it remains  to consider   representations
\begin{gather*}
\pi_5:=L([\tfrac32];\delta([-\tfrac12,\tfrac12])\rtimes\s), \ \
\pi_6:=L([\tfrac12],[\tfrac12];\delta([\tfrac32];\s)),\\
\pi_7:=L([-\tfrac12,\tfrac32];\s), \ \
\pi_8:=L([\tfrac12];\delta_{\spsi}([\tfrac12],[\tfrac32];\s)).
\end{gather*}
For $\pi_5$, consider 
$\psi'_\sigma$, 
which we get 
from $\psi_\sigma$  by replacing $(2,1) $ with $(1,2)$ in $\jrp$. Now increase $(1,2)$ to $(1,4)$ and denote new parameter by $\psi'$. We get $L([\tfrac32];\sigma)$ in the packet of $\psi'$. Now add $(2,1),(2,1)$ to $\Jord_\rho(\psi')$. Now applying   \cite[Proposition 5.3]{MR2504024} we get  that $\pi_5$ is in this new packet.

For $\pi_6$, increase $(2,1)$ to $(4,1)$ in $\Jord_\rho(\sigma)$ and denote this packet b $\psi'$. Then $\delta([\tfrac32];\sigma)$ is in the new packet. Now add $(1,2),(1,2)$ to $\Jord_\rho(\psi')$, and we get $\pi_8$ in the packet of this new parameter.

Observe that $\pi_7\in\Pi_{\psi}$, where $\psi:=\psi_\sigma\oplus E_{3,2}^\rho$.

For $\pi_8$, recall that  $(2,1)\in \Jord_\rho(\psi_\sigma)$. Then increasing $(2,1)$ to $(6,1)$ we  get $\delta([\tfrac32,\tfrac52];\s)$ in the packet. Adding $(2,1)$ and then replacing it with $(4,1)$, we  get (in two steps) that  $\delta_{\spsi}([\tfrac12,\tfrac32],[\tfrac32,\tfrac52];\s)$. Adding $(1,2)$ to the previous packet, we  get $L([\frac12];\delta_{\spsi}([\tfrac12,\tfrac32],[\tfrac32,\tfrac52];\s))$ in the packet. This representation is (by our construction) in $\Pi_\psi$, where $\jrp=((6,1),(4,1),(1,2))$. Put a standard order on $\jrp $. Denote by $\psi'$ the A-parameter obtained from $\psi$ by changing $\jrp$ to $\jr(\psi')=(4,1),(2,1),(1,2)$. 
 Consider a  standard order on $\jr(\psi')$ satisfying $(2,1)>_{\psi'}(1,2)$, and let $\varphi:\jrp\rightarrow \jr(\psi')$ be a standard bijection which preserves order. Then $\jrp$ dominates $\jr(\psi')$ with respect to $>_{\psi'}$.
By   \cite[3.1.2]{MR2767522} or   \cite[section 8]{MR3679701},  $\Jac_{[\frac52]}\circ\Jac_{[\frac32]}(L([\frac12];\delta_{\spsi}([\tfrac12,\tfrac32],[\tfrac32,\tfrac52];\s)))$, we  get an element of the packet of $\psi'$ or $0$.
To compute the last representation, observe that 
$$
L([\tfrac12];\delta_{\spsi}([\tfrac12,\tfrac32],[\tfrac32,\tfrac52];\s))
\h
[\tfrac32]\times [-\tfrac12]\rtimes\delta_{\spsi}([\tfrac12],[\tfrac32,\tfrac52];\s),
$$
and that the last representation has a unique irreducible subrepresentation. This implies
$$
L([\tfrac12];\delta_{\spsi}([\tfrac12,\tfrac32],[\tfrac32,\tfrac52];\s))
\h
[\tfrac32]\rtimes L( [\tfrac12];\delta_{\spsi}([\tfrac12],[\tfrac32,\tfrac52];\s)),
$$
which easily implies
$$
\Jac_{[\frac32]}(L([\tfrac12];\delta_{\spsi}([\tfrac12,\tfrac32],[\tfrac32,\tfrac52];\s)))
=
 L( [\tfrac12];\delta_{\spsi}([\tfrac12],[\tfrac32,\tfrac52];\s)).
$$
Observe that
$$
\delta_{\spsi}([\tfrac12],[\tfrac32,\tfrac52];\s))
\h
[\tfrac12]\times [\tfrac52]\rtimes\delta([\tfrac32];\s)
\cong
 [\tfrac52]\times[\tfrac12]\rtimes\delta([\tfrac32];\s).
$$
Since the last representation has a unique irreducible subrepresentation, we get 
$$
\delta_{\spsi}([\tfrac12],[\tfrac32,\tfrac52];\s))
\h
 [\tfrac52]\rtimes\delta_{\spsi}([\tfrac12],[\tfrac32];\s).
$$
Now
$$
 L( [\tfrac12];\delta_{\spsi}([\tfrac12],[\tfrac32,\tfrac52];\s))
 \h
  [-\tfrac12]\times [\tfrac52]\rtimes\delta_{\spsi}([\tfrac12],[\tfrac32];\s)
  \cong
  [\tfrac52]\times [-\tfrac12]\rtimes\delta_{\spsi}([\tfrac12],[\tfrac32];\s).
$$
Since the last representation has a unique irreducible subrepresentation, we get
$$
 L( [\tfrac12];\delta_{\spsi}([\tfrac12],[\tfrac32,\tfrac52];\s))
 \h
   [\tfrac52]\rtimes L([\tfrac12];\delta_{\spsi}([\tfrac12],[\tfrac32];\s)).
$$
This implies $\Jac_{[\frac52]}( L( [\tfrac12];\delta_{\spsi}([\tfrac12],[\tfrac32,\tfrac52];\s)))=
L([\tfrac12];\delta_{\spsi}([\tfrac12],[\tfrac32];\s))$, and completes the proof that $\pi_8$ is in an A-packet.

\subsubsection{$(\alpha-2,\alpha-1,\alpha), \alpha>2$ {\rm (4.8.1 of \cite{MR-T-CR3}) }} 
\label{a-2,a-1,a}
Here all 8 irreducible subquotients are unitarizable. They are
\begin{gather*}
\pi_1=\delta_{\spsi}([\alpha-2],[\alpha-1],[\alpha];\s),\ \
\pi_2=L([\alpha-2];\delta_{\spsi}([\alpha-1],[\alpha];\s)),\\
\pi_3:=L([\alpha-1],[\alpha-2];\delta([\alpha];\s)),\ \
\pi_4:=L([\alpha-2,\alpha-1];\delta([\alpha];\s)),\\
\pi_5:=L([\alpha],[\alpha-1],[\alpha-2];\s),\ \
\pi_6:=L([\alpha],[\alpha-2,\alpha-1];\s),\\
\pi_7:=L([\alpha-1,\alpha],[\alpha-2];\s),\ \
\pi_8:=L([\alpha-2,\alpha];\s).
\end{gather*}
We have  $\pi_1^t=\pi_8, \ \pi_2^t=\pi_7, \ \pi_3^t=\pi_6, \ \pi_4^t=\pi_5$. Since $\pi_1$ is tempered, and $\pi_8$  cotempered, it remains  to consider  6 representations. 

For $\pi_7$ observe  that $\delta_{\spsi}([\alpha-1],[\alpha];\s)$ is in the A-packet corresponding to $\psi_1$, where we get
$\psi_1$ 
from 
$\psi_\sigma$  by replacing $(2\alpha-3,1),(2\alpha-1,1)$ with $(2\alpha-1,1),(2\alpha+1)$ in $\Jord_\rho(\psi_\sigma)$.
Note that $\Jord_\rho(\psi_1)$ ends with 
$(2\alpha-5,1),(2\alpha-1,1),(2\alpha+1,1)$. Now $L([\alpha-1,\alpha];\s)$ (which is the Aubert dual of previous discrete series by (3) of Proposition 3.7 in \cite{MR-T-CR3}) is in the A-packet of $\psi_1\tr$ and $\Jord_\rho(\psi_1\tr)$ ends with $(1,2\alpha-5),(1,2\alpha-1),(1,2\alpha+1)$. Increasing $(1,2\alpha-5)$ to $(1,2\alpha-3)$, we get $\pi_7$ in the new A-packet. Since the last A-packet is discrete and elementary,  $\pi_2$ is also in an A-packet.

For $\pi_5$, first observe that    $\sigma\in\Pi_{\psi_\sigma'}$, where one gets $\psi'_\sigma$ from $\psi_\sigma$ by replacing $(2\alpha-3,1) $ with $(1,2\alpha-3)$ in $\jrp$, i.e. 
 $\jr(\psi'_\sigma)=\{\dots,(1,2\alpha-3),(2\alpha-1,1)\}.$ One defines a new A-parameter $\psi$ by increasing  the last block by 2, and then the previous block also by 2 (now $\jrp$ ends with
 $(2\alpha-5,1),(1,2\alpha-1),(2\alpha+1,1))$, and gets $L([\alpha-1];\delta([\alpha];\sigma))\in \Pi_\psi$.
This is an elementary discrete packet. Therefore  $L([\alpha-1],[\alpha];\sigma)=L([\alpha-1];\delta([\alpha];\sigma))\tr$ is in an elementary discrete packet of $\psi\tr$, and $\psi\tr$ ends with $(1,2\alpha-5),(2\alpha-1,1),(1,2\alpha+1).$ Replace $(1,2\alpha-5)$ with $(1,2\alpha-3)$ in $\psi\tr$. Then in this new packet we have the unique irreducible subrepresentation of $[-(\alpha-2)]\rtimes L([\alpha-1],[\alpha];\sigma)$. It is easy to show that this unique irreducible subrepresentation is $\pi_5$. Therefore, $\pi_5$ is in an A-packet.
 Further $\pi_4$ is in an A-packet since $\pi_5$ is in an elementary discrete A-packet (and $\pi_4\tr=\pi_5$).

For $\pi_3$  consider 
$\psi'_\sigma$ which we get 
from 
$\psi_\sigma$  by replacing $(2\alpha-5,1),(2\alpha-3,1)$ with $(1,2\alpha-5),(1,2\alpha-3)$ in $\Jord_\rho(\psi_\sigma)$. Then $\Jord_\rho(\psi_\sigma')$ ends with $(1,2\alpha-5),(1,2\alpha-3),(2\alpha-1,1)$. Now we proceed in the usual way (increasing each of these blocks by  2), and we get $\pi_3$ in the packet. Further $\pi_6$ is in an A-packet since $\pi_3$ is in an elementary discrete A-packet (and $\pi_3\tr=\pi_6$).

\subsubsection{Case $(0,1,2), \alpha=2$ {\rm (4.8.2 of \cite{MR-T-CR3})}}  Here all 8 irreducible subquotients are unitarizable. Two of them are tempered, while another two are cotempered. Therefore it remains  to consider   representations
\begin{gather*}
\pi_5= L([1];[0]\rtimes\delta([2];\sigma)), \quad \pi_6=L([2],[0,1];\sigma),\\
\pi_7= L([0,1];\delta([2];\sigma)),\quad \pi_8= L([2],[1];[0]\rtimes\sigma),
\end{gather*}
where $\pi_5\tr=\pi_6$ and $\pi_7\tr=\pi_8$.

For $\pi_5$ and $\pi_7$, recall that by \ref{a-1,a}, $L([1];\delta([2];\sigma))$ is in $\Pi_{\psi}$ for some A-parameter $\psi$. Now each irreducible subquotient of $[0]\rtimes L([1];\delta([2];\sigma))$ (it is also a subrepresentation) is in the packet of $\psi\oplus E_{1,1}^\rho\oplus E_{1,1}^\rho$. One of them is $\pi_5$ (apply \cite[Proposition 5.3]{MR2504024}). For another one, observe that $([0]\rtimes L([1];\delta([2];\sigma)))\tr=[0]\rtimes L([1];\delta([2];\sigma))\tr=[0]\rtimes L([2],[1];\sigma)$, and that here  $\pi_8$
 is a subquotient (again apply \cite[Proposition 5.3]{MR2504024}). Then $\pi_7=\pi_8\tr$  is a subquotient of $[0]\rtimes L([1];\delta([2];\sigma))$. Therefore, $\pi_7$ is also in an A-packet, as well as $\pi_5$.

For $\pi_6$ and $\pi_8$, recall that by \ref{a-1,a}, $L([2],[1];\sigma)$ is in $\Pi_{\psi}$ for some A-parameter $\psi$. Now each irreducible subquotient of $[0]\rtimes L([2],[1];\sigma)$  is in the packet of $\psi\oplus E_{1,1}^\rho\oplus E_{1,1}^\rho$. One of them is $\pi_8$ (by \cite[Proposition 5.3]{MR2504024}). For other one, observe that $([0]\rtimes L([2],[1];\sigma))\tr=[0]\rtimes L([2],[1];\sigma)\tr=[0]\rtimes L([1];\delta([2];\sigma))$, and that here  $\pi_5$
 is subquotient (by \cite[Proposition 5.3]{MR2504024}). Then $\pi_6=\pi_5\tr$  is a subquotient of $[0]\rtimes L([2],[1];\sigma)$. Therefore, $\pi_6$ is also in an A-packet, as well as $\pi_8$.

\subsubsection{Case $(0,1,1), \alpha=1$ {\rm  (5.2 of \cite{MR-T-CR3})}}  
Here all 7 irreducible subquotients are unitarizable. Two of them are tempered, while another two are cotempered. Therefore it remains  to consider   representations 
\begin{gather*}
\pi_1=L([0,1] ,[1] ;\s ), \ \pi_3=L([0,1] ;\delta([1] ;\s )),\\
\pi_4^{+}=L([1] ;\tau([0] _+;\delta([1] ;\s))),
\end{gather*}
where $\pi_1$ and $\pi_3$ are dual. 

For $\pi_1$ (resp. $\pi_3$) consider $\psi$ obtained from $\psi_\sigma$ 
 by replacing $(1,1)$  with the pair $(1,3), (2,2)$ (resp. $(3,1), (2,2)$) in $\Jord_\rho(\psi_\sigma)$. Now $\pi_1$ (resp. $\pi_3$) is in the $L$-packet inside $\Pi_\psi$ (by Proposition 6.0.3) of \cite{MR2767522}).

For $\pi_4^+$ consider $\psi:=\psi_\sigma\oplus E_{3,1}^\rho\oplus E_{1,3}^\rho$. 
One easily shows that $\pi_4$ is in the the $L$-packet inside $\Pi_\psi$ .

\subsubsection{Case $(\tfrac12,\tfrac12,\tfrac32), \alpha=\tfrac12$ {\rm (5.4 of \cite{MR-T-CR3}) }}
Here we have 8 irreducible unitarizable  subquotients (and two non-unitarizable). Two of them are  square integrable, and another two cotempered. Therefore,  we need to consider  the following
\begin{gather*}
\pi_3=L([-\tfrac12,\tfrac32];\s),\ \
\pi_4=L([\tfrac12,\tfrac32];\delta([\tfrac12];\s)),\\
\pi_7=L([\tfrac12];\delta([\tfrac12,\tfrac32];\s)),\ \
\pi_8=L([\tfrac32];\delta([-\tfrac12,\tfrac12]_-;\s)),
\end{gather*}
where $\pi_3\tr=\pi_4$ and $\pi_7\tr=\pi_8$.

Theorem \ref{thm-1/2} implies that $\pi_7$ and $\pi_8$ are in A-packets.
For $\pi_3$ (resp. $\pi_4$) consider $\psi:=\psi_\sigma\oplus E_{3,2}^\rho$ (resp. $\psi:=\psi_\sigma\oplus E_{2,3}^\rho$). 
One directly sees  that $\pi_3$ (resp. $\pi_4$) is in  the $L$-packet inside the A-packet $\Pi_\psi$ .

\subsubsection{Case $(\tfrac12,\tfrac12,\tfrac12), \alpha=\tfrac12$ {\rm (5.5 of \cite{MR-T-CR3})}} 
Here all 5 irreducible subquotients are unitarizable. One of them is  tempered, and another one cotempered. Therefore,  we need to consider  the following representations 
\begin{gather*}
\pi_2=[\tfrac12]\rtimes\delta([-\tfrac12,\tfrac12]_-;\s),
\pi_3=[\tfrac12]\rtimes L([\tfrac12];\delta([\tfrac12];\s)),\\
\pi_5=L([\tfrac12];\delta([-\tfrac12,\tfrac12]_+;\s)),
\end{gather*}
where $\pi_2$ and $\pi_3$ are dual.

Representations $\pi_2$ and $\pi_5$ are in the $L$-packet inside the A-packet of $\psi_\sigma \oplus E_{1,2}^\rho \oplus E_{2,1}^\rho \oplus E_{2,1}^\rho$.
The representation $\pi_3$ is in the $L$-packet inside the A-packet of $\psi_\sigma \oplus E_{2,1}^\rho \oplus E_{1,2}^\rho \oplus E_{1,2}^\rho$.

\subsubsection{Case $(0,1,1), \alpha=0$  {\rm (6.2 of \cite{MR-T-CR3})}} Here 6 irreducible subquotients are unitarizable. Two of them are  tempered, and another two cotempered. Therefore,  we need to consider  the following representations 
$$
\pi_3^\pm=L([1];\delta([0,1]_\pm;\s))
$$
(here $(\pi_3^+)\tr=\pi_3^-$). The above representation is contained in an A-packet by \eqref{packet a=0}  of Theorem \ref{thm-0} (consider  the case of  $m=n=1$ in the corollary).

\subsubsection{Case $(0,0,1), \alpha=0$ {\rm (6.3 of \cite{MR-T-CR3})}}
   Here all 6 irreducible subquotients are unitarizable. Two of them are  tempered, and another two cotempered. Therefore,  we need to consider  the following representations  
$$
\pi_2^\pm= L([0,1];\delta([0]_\pm;\s)).
$$
Here $(\pi_2^+)\tr=\pi_2^-$.

Representations $\pi_2^\pm$  are in the $L$-packet inside the A-packet of 
$\psi_\sigma \oplus E_{2,2}^\rho \oplus E_{1,1}^\rho \oplus E_{1,1}^\rho$.
\end{proof}

\section*{Appendix: Some  complementary series of A-class}
\label{sec: intermediate}

Complementary series form a considerable part of unitary duals of reductive groups. Among them, the  simplest ones are one-dimensional complementary series, which in the case of classical groups are of the form
\begin{equation}
\label{cs}
\nu^x\sigma\rtimes\pi , \quad 0< x<\beta,
\end{equation}
where 
$\sigma$ and $\pi$ are irreducible unitarizable representations of a general linear and a classical group, respectively, 
such that all representations $\nu^x\sigma\rtimes\pi ,  0\leq x<\beta$, are irreducible, and such that $\nu^\beta\sigma\rtimes\pi$ is reducible.

Observe that  for parameterising the continuous family of complementary series  \eqref{cs}, it is enough to
know lower and upper bounds of the complementary series, i.e.   $\sigma$ and $\pi$ (such that $\sigma\rtimes\pi$ is irreducible), and further, the first reducibility exponent $\beta$.

C. M\oe glin mentioned to us that it is possible that some  complementary series representations can be of A-class. We present below an example of this type.
Below $\rho, \sigma$ and $\alpha$ are as in  section \ref{rho and sigma}.

\begin{lemma}
Let $\alpha\geq 1$, $x\geq 0$ and $\alpha-x\in\Z_{>0}$. Then $[x]\rtimes\sigma$ is in an A-packet (we already know that for $x=\alpha$, both irreducible subquotients are in A-packets).
\end{lemma}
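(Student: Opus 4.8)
The plan is to reduce to $0<x<\alpha$ and then exhibit $[x]\rtimes\sigma$ as a member of an explicit M\oe glin A-packet by transplanting the construction of Section \ref{sec: >1} (specifically, Corollary \ref{cor-ends}(2) in its $m=-1$ instance) to a renamed exceptional reducibility.

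First I would handle the boundary value $x=0$: it forces $\alpha\in\Z_{>0}$, and $[0]\rtimes\sigma=\rho\rtimes\sigma$ is a unitary parabolically induced representation of two cuspidal representations, hence tempered, hence a member of a tempered A-packet. So assume $0<x<\alpha$. Since $\nu^s\rho\rtimes\sigma$ is reducible only for $s=\pm\alpha$, the representation $\pi:=[x]\rtimes\sigma=\nu^x\rho\rtimes\sigma$ is irreducible (it equals the Langlands quotient $L([x];\sigma)$); moreover $x\le\alpha-1$, since $x$ and $\alpha$ differ by an integer. Next I would write $\sigma=\pi(\psi_\sigma,\e_\sigma)$ with $\psi_\sigma\in\Pe\cap\Pddr$ tempered; cuspidality of $\sigma$ forces $\e_\sigma$ to be cuspidal on $\Jord_\rho(\psi_\sigma)$, so $\Jord_\rho(\psi_\sigma)=\{\epsilon,\epsilon+2,\dots,2\alpha-1\}$ with $\epsilon=1$ if $\alpha\in\Z$ and $\epsilon=2$ otherwise. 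In particular $2x+1\in\Jord_\rho(\psi_\sigma)$, and also $2x-1\in\Jord_\rho(\psi_\sigma)$ whenever $2x-1\ge\epsilon$.

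The key step is the transplant. Set $\beta:=x+1\in\{\tfrac32,2,\tfrac52,\dots,\alpha\}$. Let $\psi_\flat$ be $\psi_\sigma$ with the blocks $E_{2\beta-3,1}^\rho$ and $E_{2\beta-1,1}^\rho$ removed (for $\beta=\tfrac32$ only $E_{2,1}^\rho$ is removed, since $E_{0,1}^\rho$ is trivial --- exactly the $\alpha=\tfrac32$ convention of Section \ref{sec: >1}); then $\psi_\flat\in\Pe\cap\Pddr$ and $\psi_\sigma=\psi_\flat\oplus E_{2\beta-3,1}^\rho\oplus E_{2\beta-1,1}^\rho$. This is precisely the set-up of \ref{notation>1} with $\beta$ in the role of $\alpha$ and $\psi_\flat$ in the role of $\psi_-$, so the parameters $\psi_{k,l}$ and characters $\e'_{k,l}$ of \ref{notation>1} and \ref{rev n=0} are available verbatim. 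Now I would re-run, with $\beta$ in place of $\alpha$, the $m=-1$ instance of the base case of \ref{-1 leq n<m}: one has $(\psi_{2\beta-1,2\beta-3})_d=(\psi_\sigma)_d=\phi_\sigma$, so the cuspidal $\sigma$ lies in the A-packet of $\psi_{2\beta-1,2\beta-3}$ with character $\e_\sigma$, and the simple reduction step \ref{simple rs} applied to $\psi_{2\beta-1,2\beta-1}$ peels off the block $E_{2\beta-1,1}^\rho$ with exponent $\tfrac{(2\beta-1)-1}{2}=\beta-1=x$, landing on $\psi_{2\beta-3,2\beta-1}$, which still contains $\sigma$ (same $\e_\sigma$). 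Therefore $\pi((\psi,\e')_{2\beta-1,2\beta-1})$ is the unique irreducible subrepresentation of $\nu^x\rho\rtimes\sigma$, and by irreducibility it equals $\nu^x\rho\rtimes\sigma=[x]\rtimes\sigma$; hence $[x]\rtimes\sigma\in\Pi_{\psi_{2\beta-1,2\beta-1}}$. When $\beta=\alpha$, i.e. $x=\alpha-1$, this is just Corollary \ref{cor-ends}(2) itself. For the footnote, when $x=\alpha$ the subrepresentation $\delta([\alpha];\sigma)$ of $\nu^\alpha\rho\rtimes\sigma$ is square integrable, hence in a tempered A-packet, while the quotient $L([\alpha];\sigma)=L([\alpha,\alpha]\tr;\sigma)$ is in an A-packet by Corollary \ref{cor-ends}(1) with $n=0$.

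The main obstacle --- in fact the only point that needs checking --- is that M\oe glin's reduction step for $\psi_{2\beta-1,2\beta-1}$ behaves as asserted: that $a_{\rho,\psi_{2\beta-1,2\beta-1},\e'}=2\beta-1$ with $\delta=1$ at that value, that the reduced pair is $(\psi_{2\beta-3,2\beta-1},\e_\sigma)$ after the usual identification of component-group characters, and that the degenerate small values of $\beta$ (notably $\beta=\tfrac32$, where $b_{\rho,\psi,\e'}=0$ and the step deletes rather than shrinks the block) are included. All of this is a line-by-line transcription of computations already made in Section \ref{sec: >1}, so no genuinely new difficulty arises; the single new ingredient is the rank-one irreducibility of $\nu^x\rho\rtimes\sigma$ for $x<\alpha$, which is standard for classical groups.
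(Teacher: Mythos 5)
Your identification of the target parameter is correct --- the A-parameter $\psi_{2\beta-1,2\beta-1}=\psi_\flat\oplus E^\rho_{2\beta-1,1}\oplus E^\rho_{1,2\beta-1}$ you construct coincides with the parameter $\psi_3$ of the paper's proof --- but the mechanism you use to put $[x]\rtimes\sigma$ into its packet has a genuine gap, and it is exactly the point you dismiss as ``a line-by-line transcription.'' The parameter $\psi_{2\beta-1,2\beta-1}$ is elementary but does \emph{not} have discrete diagonal restriction: $\rho\otimes E_{2\beta-1}$ occurs with multiplicity two in $(\psi_{2\beta-1,2\beta-1})_d$. The simple reduction step \ref{simple rs}, together with the quantities $a_{\rho,\psi,\e}$, $b_{\rho,\psi,\e}$ and the very notation $(\rho,c,\delta_c)$ on which it rests, is set up only for elementary parameters of discrete diagonal restriction; here two blocks share $c=2\beta-1$ with opposite signs $\delta$, so ``the minimal $c$ above $b_{\rho,\psi,\e}$'' does not single out a block, and the cuspidality condition used to define $b_{\rho,\psi,\e}$ cannot even be formulated. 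For non-discrete parameters of good parity, $\pi(\psi,\boldsymbol t,\boldsymbol\eta)$ is defined by choosing an admissible order, dominating by a discrete parameter, and applying Jacquet functors --- and the outcome can genuinely depend on the order chosen (see the Remark following Proposition \ref{tr 0 inv}). This is precisely why the paper, when it meets the analogous situation $m=n$ in Theorem \ref{theorem >1} (\ref{n=m,>1}), abandons the reduction step in favour of the domination procedure; Section \ref{sec: >1} never applies \ref{simple rs} to a parameter with a repeated diagonal block, so there is nothing there to transcribe.

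What is missing is therefore the entire substance of the proof: one must exhibit the element $L([x];\delta_{\spsi}([x+1],\dots,[\alpha];\sigma))$ of a dominating \emph{discrete} elementary packet $\psi_2$, fix the non-natural admissible order with $(2\beta-1,1)>_{\psi_3}(1,2\beta-1)$ so that $\psi_2$ dominates $\psi_3$, and then compute $\Jac_{\alpha}\circ\Jac_{\alpha-1}\circ\cdots\circ\Jac_{x+1}$ of that representation, checking at each stage (via embeddings and Frobenius reciprocity) that the result is nonzero and finally equals the irreducible representation $[x]\rtimes\sigma$. None of these computations appear in your argument, and they are not routine consequences of anything proved earlier. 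A secondary problem: your fallback to Corollary \ref{cor-ends}\,(2) for $x=\alpha-1$ is essentially circular, since the instance needed is $m=n=-1$, which is excluded from formula \eqref{formula>1} and not covered by \ref{n=m,>1} (that subsubsection treats only $m=n\geq0$); the case $[\alpha-1]\rtimes\sigma$ is exactly what the first half of the Appendix argument is there to establish. Your treatment of $x=0$ and of the footnote case $x=\alpha$ is fine, as is the observation that $\nu^x\rho\rtimes\sigma$ is irreducible for $0<x<\alpha$.
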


\begin{proof} If $x=0$, then we are in the tempered situation, and the claim obviously holds (the A-parameter is $\psi_\sigma\oplus E_{1,1}^\rho \oplus E_{1,1}^\rho$). Therefore, we suppose $x>0$, which implies $\alpha>1$.

First we show  that $[\alpha-1]\rtimes\sigma$ is in an A-packet if $\alpha>1$. Denote by $(\psi_\sigma',\e_\sigma')$ the parameter obtained from $(\psi_\sigma,\e_\sigma)$ deforming 
$E_{2\alpha-3,1}^\rho$ to $E_{1,2\alpha-3}^\rho$ 
(then
$
\jr(\psi_\sigma')$ ends with $
(1,2\alpha-3),(2\alpha-1,1)
$). 
We have $\sigma=\pi(\psi_\sigma',\e_\sigma')$.

Denote by $(\psi_1,\e_1)$ the parameter obtained from $(\psi_\sigma',\e_\sigma')$  by deforming $E_{2\alpha-1,1}^\rho$   to $E_{2\alpha+1,1}^\rho$ (now
$
\jr(\psi_1)$ ends with $
(1,2\alpha-3),(2\alpha+1,1)
$). 
Then $\pi(\psi_1,\e_1)=\delta([\alpha];\sigma)$. 

Let $(\psi_2,\e_2)$ be obtained from $(\psi_1,\e_1)$ by  deforming $E_{1,2\alpha-3}^\rho$   to $E_{1,2\alpha-1}^\rho$ (now
$
\jr(\psi_1)$ ends with $(1,2\alpha-1),(2\alpha+1,1)
$). 
Then
$\pi(\psi_2,\e_2)=L([\alpha-1];\delta([\alpha];\sigma))$.
Denote by $>_{\psi_2}$ the standard order on $\jr(\psi_2)$.

Let  $\psi_3$ be the  A-parameter obtained from $\psi_2$ by replacing $(1,2\alpha+1)$ with $(1,2\alpha-1)$ (now
$
\jr(\psi_3)
$
ends with $(2\alpha-1,1),(1,2\alpha-1)$).
Denote by  $>_{\psi_3}$ on $\jr(\psi_3)$  standard order which satisfies
$$
(2\alpha-1,1)>_{\psi_3}(1,2\alpha-1)
$$
  ($>_{\psi_3}$ is an admissible order, but not natural; $\psi_3$ is a multiplicity one parameter, but not discrete).

We denote by  $\varphi:\Jord_\rho(\psi_2)\rightarrow \Jord_\rho(\psi_3)$ the standard bijection which  preserves order. This implies that it carries
$$
(2\alpha+1,1)\mapsto (2\alpha-1,1)
$$
(on the remaining elements it is the identity).
Now $\Jord(\psi_2)$ dominates $\Jord(\psi_3)$ with respect to $>_{\psi_3}$. Here we need to consider  the matrix $X_{(\rho,A,B,\zeta_{a,b})}^{>\hskip-4pt>}$ (defined in section 5 of \cite{MR3679701}), which is in our case a $1\times1$ matrix 
$
X_{(\rho,\alpha-1,0,1)}^{>\hskip-4pt>}=[\alpha].
$
We  get the elements of $\Pi_{\psi_3}$ from $ \Jord(\psi_2)$ applying 
$
\Jac_{\alpha}
$
to each element of $\Pi_{\psi_2}$
(the result is always   either an irreducible representation or $0$). 
Observe that
\begin{multline}
L([\alpha-1];\delta([\alpha];\sigma))
\h
[-(\alpha-1)]\rtimes\delta([\alpha];\sigma)
\\
\h [-(\alpha-1)]\times [\alpha]\rtimes\sigma
\cong
 [\alpha] \times
[-(\alpha-1)]\rtimes\sigma\cong [\alpha]\times [\alpha-1]\rtimes\sigma.
\end{multline}
Now Frobenius reciprocity implies that 
$
\Jac_{\alpha}(L([\alpha-1];\delta([\alpha];\sigma)))=
[\alpha-1]\rtimes \sigma.
$
Therefore, $[\alpha-1]\rtimes\sigma$ is in 
the A-packet of $\psi_{3}$.

\medskip

In a similar way we  show next that $[\alpha-2]\rtimes\sigma$ is in an A-packet if $\alpha>2$.
Denote now by $(\psi_\sigma',\e_\sigma')$ the parameter obtained from $(\psi_\sigma,\e_\sigma)$ by deforming $E_{2\alpha-5,1}^\rho$ to $E_{1,2\alpha-5}^\rho$
(then
$
\jr(\psi_\sigma')$ ends with $(1,2\alpha-5),(2\alpha-3,1),(2\alpha-1,1)
$). 
We have $\sigma=\pi(\psi_\sigma',\e_\sigma')$.

Denote by $(\psi_1,\e_1)$ the parameter obtained from $(\psi_\sigma',\e_\sigma')$ by  deforming $E_{2\alpha-1,1}^\rho$   to $E_{2\alpha+1,1}^\rho$ and then $E_{2\alpha-3,1}^\rho$   to $E_{2\alpha11,1}^\rho$ (now
$
\jr(\psi_1)$ ends with $
(1,2\alpha-5),(2\alpha-1,1),(2\alpha+1,1)
$). 
We get directly that $\pi(\psi_1,\e_1)=\delta_{\spsi}([\alpha-1],[\alpha];\sigma)$. 

Let $(\psi_2,\e_2)$ be obtained from $(\psi_1,\e_1)$  by deforming $E_{1,2\alpha-5}^\rho$   to $E_{1,2\alpha-3}^\rho$ (now
$
\jr(\psi_1)$ ends with $(1,2\alpha-3),(2\alpha-1,1),(2\alpha+1,1)
$). 
Then
$\pi(\psi_2,\e_2)=L([\alpha-2];\delta_{\spsi}([\alpha-1],[\alpha];\sigma))$.
Denote by $>_{\psi_2}$ the standard order on $\jr(\psi_2)$.

Denote by $\psi_3$ A-parameter obtained from $\psi_2$ by replacing $(2\alpha-1,1)$ by $(2\alpha-3,1)$ and then $(2\alpha+1,1)$ by $(2\alpha-1,1)$ (now
$
\jr(\psi_3)
$
ends with $(2\alpha-3,1),(1,2\alpha-3),(2\alpha-1,1)$).
Denote by  $>_{\psi_3}$ 
the
standard order on $\jr(\psi_3)$ which satisfies
$$
(2\alpha-3,1)>_{\psi_3}(1,2\alpha-3).
$$
Let  $\varphi:\Jord_\rho(\psi_2)\rightarrow \Jord_\rho(\psi_3)$ be the standard bijection which  preserves order. This implies that it carries
$$
(2\alpha-1,1)\mapsto (2\alpha-3,1), \qquad (2\alpha+1,1)\mapsto (2\alpha-1,1), 
$$
(on the remaining elements it is the identity).
Now $\Jord(\psi_2)$ dominates $\Jord(\psi_3)$ with respect to $>_{\psi_3}$. Here we need to consider  the matrices $X_{(\rho,A,B,\zeta_{a,b})}^{>\hskip-4pt>}$, which  in our case are $1\times1$ matrices 
$
X_{(\rho,\alpha-2,0,1)}^{>\hskip-4pt>}=[\alpha-1]$ and $ X_{(\rho,\alpha-1,0,1)}^{>\hskip-4pt>}=[\alpha]
$.
We need to apply them in descending order to $L([\alpha-2];\delta_{\spsi}([\alpha-1],[\alpha];\sigma))$, i.e. we need to apply $\Jac_\alpha\circ\Jac_{\alpha-1}$ to the last representation (and we will get either 0 or an element of $\Pi_{\psi_3}$).

Now we will compute the action of the above operator on $L([\alpha-2];\delta_{\spsi}([\alpha-1],[\alpha];\sigma)$.
Observe that
\begin{multline}
L([\alpha-2];\delta_{\spsi}([\alpha-1],[\alpha];\sigma)
\h
[-(\alpha-2)]\rtimes\delta_{\spsi}([\alpha-1],[\alpha];\sigma)
\\
\h [-(\alpha-2)]\times[\alpha-1]\rtimes\delta([\alpha];\sigma)
\cong
[\alpha-1]\times [-(\alpha-2)]\rtimes\delta([\alpha];\sigma).
\end{multline}
Note that $[-(\alpha-2)]\rtimes\delta([\alpha];\sigma)$ is irreducible. Now Frobenius reciprocity implies that 
$$
\Jac_{\alpha-1}(L([\alpha-2];\delta_{\spsi}([\alpha-1],[\alpha];\sigma)))=[-(\alpha-2)]\rtimes\delta([\alpha];\sigma).
$$
Further
$$
[-(\alpha-2)]\rtimes\delta([\alpha];\sigma)
\h
[-(\alpha-2)]\times[\alpha]\times\sigma
\cong 
[\alpha]\times [-(\alpha-2)]\times\sigma
\cong
[\alpha]\times [\alpha-2]\times\sigma,
$$
and one directly concludes that $\Jac_{\alpha}([-(\alpha-2)]\rtimes\delta([\alpha];\sigma))=[\alpha-2]\times\sigma$.
Therefore, $[\alpha-2]\rtimes\sigma$ is in 
the A-packet of $\psi_{3}$.

Continuing this procedure, we complete the proof  of the lemma.
\end{proof}

\begin{definition}
\label{def-prim} Suppose that an irreducible representation $\pi$ of a classical group $S_n$ is in an A-packet, and that there do not exist  Speh representations $\tau_1,\dots,\tau_k$ and an irreducible representation $\pi_0$ of a classical group $S_m$ with $m<n$,  contained in some A-packet, such that
$$
\pi\h \tau_1\times\dots\times\tau_k\rtimes\pi_0.
$$
Then $\pi_0$ will be called a primitive representation of A-type.
\end{definition}

We will very briefly recall  the notion of automorphic dual, which  L. Clozel introduced in \cite{MR2331344} (one can find  more details and further references in the original Clozel paper). 

We first recall of notion of the support of a unitary representation $\Pi$ of a locally compact group $\bf G$.  An irreducible unitary representation $\pi$ of $\bf G$ is weakly contained in $\Pi$ if each diagonal matrix coefficient of $\pi$  on each compact subset of $\bf G$ can be approximated by finite sums of diagonal matrix coefficients of $\Pi$ (i.e. each diagonal matrix coefficient of $\pi$  on each compact subset of $\bf G$ is  the limit of sums of diagonal matrix coefficients of $\Pi$). The support of $\Pi$ is the set of equivalence classes of all irreducible unitary representations $\pi$ of $\bf G$ which are weakly contained in $\Pi$.

Let $G$ be a  reductive group defined over an algebraic number
field $k$ (or more generally, over a global field $k$). Fix any place $v$  of $k$ and denote by $k_v$ the completion of $k$ at $v$. The automorphic dual $\widehat G_{v,aut}$ is  the support of 
the representations of the group $G(k_v)$ of $k_v$-rational points of $G$  in the
space of square integrable automorphic forms $L^2(G(k)\backslash H(\mathbb A_k))$ (by right translations). We  denote $F = k_v$.

Motivated by \cite{MR2742213}, we ask the following 

\medskip

\noindent
{\bf Question 9.3.}
Is each primitive representation of A-type isolated in the automorphic dual?

\bibliographystyle{amsalpha}
\bibliography{simple-arthur}

\end{document}